\DeclareMathOperator{\Aut}{Aut}
\DeclareMathOperator{\Out}{Out}
\DeclareMathOperator{\Inn}{Inn}
\DeclareMathOperator{\Isom}{Isom}
\DeclareMathOperator{\BS}{BS}
\newtheorem{thm}{Theorem}
\newtheorem{prop}[thm]{Proposition}
\newtheorem{deprop}[thm]{Proposition/Definition}
\newtheorem{lem}[thm]{Lemma}
\newtheorem{cor}[thm]{Corollary}
\newtheorem*{thm-metric}{Theorems~\ref{thm: irreducible infimum realized} and \ref{thm: inf = sup}}
\newtheorem*{thm-geodesics}{Theorem~\ref{thm: geodesics}}
\newtheorem*{thm-traintracks}{Theorem~\ref{thm: train tracks}}
\theoremstyle{definition}
\newtheorem{de}[thm]{Definition}
\newtheorem{ex}[thm]{Example}
\newtheorem*{rem}{Remark}
\newtheorem*{ex-virtuallyfree}{Example~\ref{ex: train tracks for virtually free}}
\newtheorem*{ex-GBS}{Example~\ref{ex: train tracks for GBS}}
\begin{document}

\title{The Lipschitz metric on deformation spaces of $G$-trees}

\author{Sebastian Meinert}
\address{Freie Universit\"at Berlin, Institut f\"ur Mathematik, Arnimallee 7, 14195 Berlin, Germany}
\email{\href{mailto:sebastian.meinert@fu-berlin.de}{sebastian.meinert@fu-berlin.de}}
\urladdr{\href{http://userpage.fu-berlin.de/meinert}{http://userpage.fu-berlin.de/meinert}}
\keywords{Lipschitz metric, deformation spaces, $G$-trees, outer automorphisms, train tracks, virtually free groups, generalized Baumslag-Solitar groups}

\begin{abstract}
For a finitely generated group $G$, we introduce an asymmetric pseudometric on projectivized deformation spaces of $G$-trees, using stretching factors of $G$-equivariant Lipschitz maps, that generalizes the Lipschitz metric on Outer space and is an analogue of the Thurston metric on Teichm\"uller space. We show that in the case of irreducible $G$-trees distances are always realized by minimal stretch maps, can be computed in terms of hyperbolic translation lengths, and geodesics exist. We then study displacement functions on projectivized deformation spaces of $G$-trees and classify automorphisms of $G$. As an application, we prove existence of train track representatives for irreducible automorphisms of virtually free groups and nonelementary generalized Baumslag-Solitar groups that contain no solvable Baumslag-Solitar group $\BS(1,n)$ with $n\geq 2$.
\end{abstract}

\maketitle

\thispagestyle{empty}

\tableofcontents

\section{Introduction}

Let $G$ be a finitely generated group. A \emph{$G$-tree} is a metric simplicial tree on which $G$ acts by simplicial isometries without inversions of edges. A $G$-tree is \emph{minimal} if it does not contain a proper $G$-invariant subtree. To a nontrivial minimal $G$-tree $T$ we associate its \emph{deformation space} $\mathcal{D}$ consisting of $G$-equivariant isometry classes of nontrivial minimal $G$-trees $T'$ for which there exist $G$-equivariant (not necessarily simplicial) maps $T\to T'$ and $T'\to T$. The outer automorphism group $\Out(G)$ contains a subgroup $\Out_\mathcal{D}(G)$ (see Definition~\ref{de: Out D}) that acts on $\mathcal{D}$ by precomposing the $G$-actions on the trees. Deformation space of $G$-trees are analogues of Teichm\"uller spaces of surfaces in the context of group splittings. Important examples include the (unprojectivized) Culler-Vogtmann Outer space of a nonabelian free group (Example~\ref{ex: Outer space}), deformation spaces of -- more generally -- virtually nonabelian free groups (Example~\ref{ex: virtually free group}), and deformation spaces of nonelementary generalized Baumslag-Solitar groups (Example~\ref{ex: nonelementary GBS groups}).

The \emph{topology} of deformation spaces of $G$-trees has been extensively studied, e.g., in \cite{Cl05} and \cite{GL07}. For instance, the projectivized deformation space $\mathcal{PD}=\mathcal{D}/\mathbb{R}_{>0}$, the space of $G$-equivariant homothety classes of $G$-trees in $\mathcal{D}$, can be given the structure of a contractible simplicial complex with missing faces. The \emph{geometry} of deformation spaces, however, has only been addressed in the special case of Outer space \cite{FM11}\footnote{And recently also in the case of the Outer space of a free product \cite{FM13}.}, which admits a description as a space of finite marked metric graphs. Here one studies the asymmetric Lipschitz metric, an analogue of the asymmetric Thurston metric on Teichm\"uller space. The purpose of this paper is to introduce an asymmetric pseudometric on general projectivized deformation spaces of $G$-trees that generalizes the asymmetric Lipschitz metric on Outer space. In order to do so, we think of $G$-trees in $\mathcal{PD}$ as their covolume-1-representatives in $\mathcal{D}$ and for $T,T'\in\mathcal{PD}$ we define $$d_{Lip}(T,T')=\log\left(\inf_f \sigma(f)\right)$$ where $f$ ranges over all $G$-equivariant Lipschitz maps from $T$ to $T'$ and $\sigma(f)$ denotes the Lipschitz constant of $f$. Although in general we have $d_{Lip}(T,T')\neq d_{Lip}(T',T)$ and $d_{Lip}(T,T')=0$ does not imply that $T$ and $T'$ are $G$-equivariantly isometric (Example~\ref{ex: not an asymmetric metric}), the Lipschitz metric turns out to have useful properties.

If $\mathcal{PD}$ consists of $G$-trees that are \emph{irreducible}, i.e., if $G$ contains a free subgroup of rank 2 acting freely, then the symmetrized Lipschitz metric $$d_{Lip}^{sym}(T,T')=d_{Lip}(T,T')+d_{Lip}(T',T)$$ is an actual metric on $\mathcal{PD}$ (Proposition~\ref{prop: symmetrized metric}).

\subsubsection*{Minimal stretch maps and witnesses}

A key feature of the Lipschitz metric on Outer space is that the distance between two marked metric graphs is always realized by a map with minimal Lipschitz constant and that the minimum Lipschitz constant equals the maximum ratio of lengths of immersed loops in the corresponding quotient graphs \cite[Proposition~3.15]{FM11}. This reflects a theorem of Thurston that the Lipschitz distance between two hyperbolic surfaces in Teichm\"uller space is always realized by a minimal stretch map and that the extremal Lipschitz constant equals the supremum ratio of lengths of essential simple closed curves \cite[Theorem~8.5]{Th98}. In the same spirit, we will show the following:

\begin{thm-metric}
Let $\mathcal{PD}$ be a projectivized deformation space of irreducible $G$-trees. For all $T,T'\in\mathcal{PD}$ there exists
\begin{enumerate}
\item a $G$-equivariant Lipschitz map $f\colon T\to T'$ such that $d_{Lip}(T,T')=\log\left(\sigma(f)\right)$;
\item a hyperbolic group element $\xi\in G$ such that $$d_{Lip}(T,T')=\log\left(\frac{l_{T'}(\xi)}{l_T(\xi)}\right)=\log\left(\sup_g\frac{l_{T'}(g)}{l_T(g)}\right)$$ where $g$ ranges over all hyperbolic group elements of $G$ and by $l_T(g)$ we denote the \emph{translation length} $\inf_{x\in T}d(x,gx)$ of $g$ in $T$. We will call $\xi$ a \emph{witness} for the minimal stretching factor from $T$ to $T'$.
\end{enumerate}
\end{thm-metric}

\subsubsection*{Geodesics}

Francaviglia-Martino \cite{FM11} showed, making use of a folding technique due to Skora \cite{Sk89}, that the asymmetric Lipschitz metric on Outer space is geodesic. We will apply Skora's folding technique in the general context to show:

\begin{thm-geodesics}
If $\mathcal{PD}$ is a projectivized deformation space of irreducible $G$-trees then for all $T,T'\in\mathcal{PD}$ there exists a $d_{Lip}$-geodesic (see Definition~\ref{de: geodesic}) $\gamma\colon [0,1]\to\mathcal{PD}$ with $\gamma(0)=T$ and $\gamma(1)=T'$.
\end{thm-geodesics}

\subsubsection*{Train track representatives}

An automorphism $\Phi\in \Out_\mathcal{D}(G)$ is \emph{reducible} if there exists a $G$-tree $T\in\mathcal{PD}$ and a $G$-equivariant map $f\colon T\to T\Phi$ that leaves an essential proper $G$-invariant subforest of $T$ invariant, where a subforest $S\subset T$ is \emph{essential} if it contains the hyperbolic axis of some hyperbolic group element. We say that $\Phi\in \Out_\mathcal{D}(G)$ is \emph{represented by a train track map} if there exists a $G$-tree $T\in\mathcal{PD}$ and an extremal $G$-equivariant Lipschitz map $f\colon T\to T\Phi$ such that, loosely speaking, every iterate of $f$ maps certain immersed paths in $T$ to immersed paths (see Section~\ref{sec: train track representatives} for a precise definition). Bestvina \cite{Be11} classified free group automorphisms $\Phi$ by studying associated displacement functions $T\mapsto d_{Lip}(T,T\Phi)$ on Outer space. By doing so, he gave an alternative proof of Bestvina-Handel's train track theorem \cite[Theorem~1.7]{BH92} that every irreducible automorphism of a free group is represented by a train track map.
Generalizing Bestvina's approach, we will study displacement functions on projectivized deformation spaces of $G$-trees to classify automorphisms of more general groups and show the following:

\begin{thm-traintracks}
Let $\mathcal{PD}$ be a projectivized deformation space of irreducible $G$-trees. If $\Out_\mathcal{D}(G)$ acts on $\mathcal{PD}$ with finitely many orbits of simplices then every irreducible automorphism $\Phi\in \Out_\mathcal{D}(G)$ is represented by a train track map.
\end{thm-traintracks}

Throughout, we will pay particular attention to deformation spaces of $G$-trees for virtually free groups and nonelementary generalized Baumslag-Solitar groups:

\begin{ex-virtuallyfree}
Let $G$ be a finitely generated virtually nonabelian free group, i.e., $G$ contains a finitely generated nonabelian free subgroup of finite index. Let $\mathcal{PD}$ be the projectivized deformation space of minimal $G$-trees with finite vertex stabilizers (Example~\ref{ex: virtually free group}). Then $\Out_\mathcal{D}(G)=\Out(G)$ and every irreducible automorphism $\Phi\in \Out(G)$ is represented by a train track map. This generalizes \cite[Theorem~1.7]{BH92} to virtually free groups.
\end{ex-virtuallyfree}

A \emph{generalized Baumslag-Solitar (GBS) group} is a finitely generated group that acts on a simplicial tree with infinite cyclic vertex and edge stabilizers. Among these groups are the classical \emph{Baumslag-Solitar groups} $\BS(p,q)=\langle x,t\ |\ tx^pt^{-1}=x^q\rangle$ with $p,q\in\mathbb{Z}\setminus\left\{0\right\}$. A GBS group is \emph{nonelementary} if it is not isomorphic to $\mathbb{Z}$, $\BS(1,1)\cong\mathbb{Z}^2$, or the Klein bottle group $\BS(1,-1)$.

\begin{ex-GBS}
Let $G$ be a nonelementary GBS group that contains no solvable Baumslag-Solitar group $\BS(1,n)$ with $n\geq 2$. Let $\mathcal{PD}$ be the projectivized deformation space of minimal $G$-trees with infinite cyclic vertex and edge stabilizers (Example~\ref{ex: nonelementary GBS groups}). We have $\Out_\mathcal{D}(G)=\Out(G)$ and every irreducible automorphism $\Phi\in \Out(G)$ is represented by a train track map.
\end{ex-GBS}

\begin{rem}
In an earlier preprint of this paper, the main results were formulated under the additional hypothesis that the $G$-trees in $\mathcal{PD}$ are not only irreducible but also locally finite. Recently, Francaviglia-Martino \cite{FM13} independently proved analogous statements for irreducible $G$-trees with trivial edge stabilizers that are possibly locally infinite (they work in the Outer space of a free product, defined in \cite{GL07fp}). Making use of an ultralimit argument of Horbez \cite{Ho14}, we are now able to remove the local finiteness assumption for arbitrary edge stabilizers. As a special case, this gives an alternative, shorter proof of \cite[Theorem~5.12]{FM13}.
\end{rem}

\noindent\textbf{Structure of this paper}\quad In Sections~\ref{sec: trees} and \ref{sec: deformation spaces} we briefly review the notions of $G$-trees and deformation spaces of $G$-trees. In Section~\ref{sec: Lipschitz metric} we introduce the Lipschitz metric on projectivized deformation spaces of $G$-trees. We show that in the case of irreducible $G$-trees distances are always realized by minimal stretch maps, can be computed in terms of hyperbolic translation lengths, and geodesics exist. In Section~\ref{sec: classification} we study displacement functions on projectivized deformation spaces of $G$-trees, classify automorphisms of $G$, and address the existence of train track representatives for irreducible automorphisms.

\medskip

\noindent\textbf{Acknowledgements}\quad I am indebted to Mladen Bestvina for suggesting this line of research and for giving me valuable feedback. I would also like to thank Lee Mosher and Henry Wilton for their helpful comments on MathOverflow. The discussion on elliptic automorphisms in Section~\ref{sec: elliptics} arose out of discussions with Camille Horbez, whom I would like to thank for his deep interest, and Gilbert Levitt, to whom I am particularly thankful for his advice and for addressing my many questions. Finally, I would like to thank the referee for his comments.

\section{$G$-trees}
\label{sec: trees}

A \emph{metric simplicial tree} is a contractible 1-dimensional simplicial complex $T$ together with a positive length assigned to every edge. We denote by $V(T)$ the set of vertices and by $E(T)$ the set of edges of $T$. Every metric simplicial tree $T$ carries a natural path metric $d=d_T$. We equip $T$ with the metric topology, which is generally coarser than the simplicial topology; the two topologies agree if and only if the simplicial complex is locally finite \cite[Lemma~2.2.6]{Ch}. Any two points $x,y\in T$ are joined by a unique compact geodesic segment $[x,y]\subseteq T$ and between any two disjoint closed connected subsets $A,B\subset T$ there exists a unique compact connecting segment $[a,b]\subseteq T$ such that $A\cap [a,b]=a$ and $B\cap [a,b]=b$. In particular, $T$ is a simplicial $\mathbb{R}$-tree (see \cite{Ch} for an introduction to $\mathbb{R}$-trees) and, in fact, every simplicial $\mathbb{R}$-tree arises this way \cite[Theorem~2.2.10]{Ch}.

Let $G$ be a finitely generated group. A \emph{$G$-tree} is a metric simplicial tree $T$ on which $G$ acts by simplicial isometries without inversions of edges. Bass-Serre theory gives a correspondence between $G$-trees and metric graph of groups decompositions of $G$ (see \cite{Se}). We will always assume that the simplicial structure on $T$ is not a subdivision of a coarser simplicial structure with respect to which the action of $G$ on $T$ would still be simplicial and without inversions of edges (i.e., $T$ has no \emph{redundant} vertices). For a vertex or edge $x\in V(T)\cup E(T)$, we denote by $G_x\leq G$ its stabilizer. A group element $g\in G$ is \emph{elliptic in $T$} if it fixes a point in $T$ and \emph{hyperbolic} if not. The finite-order group elements of $G$ are always elliptic \cite[Proposition~19]{Se}. A finitely generated group that acts on a simplicial tree by simplicial automorphisms without inversions of edges has a global fixed point if and only if every group element is elliptic \cite[Corollary 6.5.3]{Se}.

A $G$-tree is \emph{minimal} if it does not contain a proper $G$-invariant subtree. Minimal $G$-trees are cocompact, i.e., their quotient graphs by the action of $G$ are finite \cite[Proposition~7.9]{Ba93}, and $G$-equivariant maps between minimal $G$-trees are always surjective; both properties will be used frequently and without further notice. The \emph{covolume} of a minimal $G$-tree $T$ is the volume of the finite metric quotient graph $G\backslash T$. There are 5 types of minimal $G$-trees (we adopt the naming convention from \cite{GL07}; see also \cite{CM87}): A minimal $G$-tree $T$ is \emph{trivial} if it is a point. It is \emph{dihedral} if it is a line and the action of $G$ does not preserve the orientation. The $G$-tree $T$ is \emph{linear abelian} if it is a line and $G$ acts by translations. It is \emph{genuine abelian} if $G$ fixes an end of $T$ and $T$ is not a line. Lastly, $T$ is \emph{irreducible} if $G$ contains a free subgroup of rank $2$ acting freely on $T$. In the following, we will almost exclusively be concerned with irreducible minimal $G$-trees, for reasons that will become apparent.

\subsubsection*{Translation lengths}

We briefly review well-known facts about translation lengths in $G$-trees. For further details see \cite{CM87} or \cite{Pa89}.

\begin{de}
Let $(T,d)$ be a $G$-tree. For a group element $g\in G$, define the \emph{translation length} of $g$ in $T$ by $$l(g)=l_T(g):=\inf_{x\in T}d(x,gx)\in\mathbb{R}_{\geq 0}$$ and its \emph{characteristic set} in $T$ by $C_g=C_T(g):=\left\{x\in T\ |\ d(x,gx)=l_T(g)\right\}\subseteq T.$
\end{de}

Conjugate group elements have the same translation length, and $C_g$ is always nonempty (i.e., $G$ acts on $T$ by semisimple isometries) and $g$-invariant. The translation length function $l_T\colon G\to\mathbb{R}$ defines a point in $\mathbb{R}^{\mathcal{C}(G)}$, where $\mathcal{C}(G)$ denotes the set of conjugacy classes of $G$. Clearly, $G$-equivariantly isometric $G$-trees have the same translation length function. If $T$ has finitely many $G$-orbits of edges, its translation length function has discrete image in $\mathbb{R}$.

A group element $g\in G$ is elliptic in $T$ if and only if $l(g)=0$. Its characteristic set is then its fixed point set and for all $x\in T$ the midpoint of the segment $[x,gx]$ is fixed by $g$. A group element $g\in G$ is hyperbolic in $T$ if and only if $l(g)>0$. Its characteristic set is then isometric to $\mathbb{R}$, the group element $g$ acts on $C_g$ by translations of length $l(g)$, and for all $k\in\mathbb{Z}\setminus\left\{0\right\}$ we have $l(g^k)=|k|\cdot l(g)$ and $C_{g^k}=C_g$. The characteristic set of a hyperbolic group element $g$ is the unique $g$-invariant line in $T$. We will instead denote it by $A_g$ and call it the \emph{hyperbolic axis} of $g$. Every $G$-tree without a global fixed point contains a unique minimal $G$-invariant subtree, given by the union of all hyperbolic axes \cite[Proposition~3.1]{CM87}.

\begin{prop}
\label{prop: translation formulas}
Let $T$ be a $G$-tree and $g,h\in G$.
\begin{enumerate}
\item\label{it: distance formula} For all $x\in T$ we have $d(x,gx)=l(g)+2 d(x,C_g)$.
\item\label{it: elliptics} Suppose that $g$ and $h$ are elliptic. Then $l(gh)=2d(C_g,C_h)$. In particular, if the fixed point sets of $g$ and $h$ are disjoint then $gh$ and $hg$ are hyperbolic.
\item\label{it: hyperbolics} Suppose that $g$ and $h$ are hyperbolic. If $A_g\cap A_h=\emptyset$ then $$l(gh)=l(hg)=l(g)+l(h)+2d(A_g,A_h)$$ and, in particular, $gh$ and $hg$ are hyperbolic. The hyperbolic axes of $gh$ and $hg$ then both intersect each $A_g$ and $A_h$.
\end{enumerate}
\end{prop}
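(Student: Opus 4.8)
All three assertions are standard facts about nearest-point projection in trees, and I would organise the proof as follows. \emph{The first formula.} If $x\in C_g$ there is nothing to prove, so assume $x\notin C_g$ and let $p$ be the nearest-point projection of $x$ onto $C_g$, which exists and is unique because $\{x\}$ and the closed connected set $C_g$ are disjoint. If $g$ is elliptic, then $gp=p$ and the midpoint of $[x,gx]$ lies in $C_g$, so $d(x,gx)=d(x,p)+d(p,gx)=2\,d(x,p)=2\,d(x,C_g)$, which equals $l(g)+2\,d(x,C_g)$ since $l(g)=0$. If $g$ is hyperbolic, then $p$ and $gp$ are distinct points of $A_g$, the segments $[x,p]$ and $[gx,gp]=g[x,p]$ meet $A_g$ only in $p$ and in $gp$ respectively, and $[p,gp]\subseteq A_g$; hence $[x,gx]=[x,p]\cup[p,gp]\cup[gp,gx]$ is a geodesic and $d(x,gx)=d(x,p)+d(p,gp)+d(gp,gx)=l(g)+2\,d(x,C_g)$, using $d(p,gp)=l(g)$ for $p\in A_g$.

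\emph{The statements for products.} I would deduce both (2) and (3) from a single lemma: \emph{if $C_g\cap C_h=\emptyset$ then $gh$ is hyperbolic, $l(gh)=l(g)+l(h)+2\,d(C_g,C_h)$, and $A_{gh}$ contains the bridge between $C_g$ and $C_h$; in particular $A_{gh}$ meets both $C_g$ and $C_h$.} Granting this, part (3) follows by applying the lemma once to the pair $g,h$ and once to the pair $h,g$ (note that $A_g\cap A_h=\emptyset$ forces $C_g\cap C_h=\emptyset$); and for part (2), the lemma gives $l(gh)=2\,d(C_g,C_h)$ whenever the fixed point sets are disjoint, while if they share a point that point is fixed by $gh$ and by $hg$, so that $l(gh)=l(hg)=0=2\,d(C_g,C_h)$. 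In all cases $l(hg)=l(gh)$ since $hg=g^{-1}(gh)g$.

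\emph{Locating $(gh)q$.} Let $[p,q]$ be the bridge, $p\in C_g$, $q\in C_h$, and set $m=d(p,q)$; since $C_g$ and $C_h$ are disjoint closed connected subsets, $m>0$. Because $hq\in C_h$ and $d(hq,q)=l(h)$, the bridge property shows that the nearest-point projection of $hq$ onto $C_g$ is $p$ and that $d(hq,C_g)=d(hq,q)+m=l(h)+m$. Applying the first formula to $g$ and the point $hq$ gives $d(hq,g(hq))=l(g)+2(l(h)+m)$, and, exactly as in its proof, the geodesic $[hq,g(hq)]$ runs through $p$ and through $gp$. Reading off the portion of this geodesic from $q$ onward, one finds that $[q,(gh)q]$ is the concatenation of the legs $[q,p]$, $[p,gp]$, $g[p,q]$ and $[gq,(gh)q]$ (some possibly degenerate), of lengths $m$, $l(g)$, $m$ and $l(h)$. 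In particular $d(q,(gh)q)=l(g)+l(h)+2m$ and $[q,(gh)q]\supseteq[p,q]$, so $l(gh)\le l(g)+l(h)+2m$.

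\emph{Identifying the axis.} It remains to show that $L:=\bigcup_{n\in\mathbb{Z}}(gh)^n[q,(gh)q]$ is a geodesic line on which $gh$ translates by $l(g)+l(h)+2m$; this identifies $L$ with $A_{gh}$ and yields the translation length formula as well as $A_{gh}\supseteq[p,q]$. By $(gh)$-equivariance this reduces to ruling out backtracking at the finitely many junction points (modulo the $(gh)$-action) of the concatenation $[q,(gh)q]\cup(gh)[q,(gh)q]$. At a junction where a translate of the bridge abuts a translate of $A_g$ or $A_h$, the two legs leave the junction in different directions because $[p,q]$ meets $C_g$ only in $p$ and $C_h$ only in $q$, so the same transversality holds for each translate; at the junction $(gh)q$ — and, when $l(g)=0$, at $p$ — one instead uses that $g$ does not fix the initial direction of $[p,q]$ at $p$ and that $q$ lies strictly between $p$ and $hp$, both again immediate from the bridge property. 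I expect this verification, together with the bookkeeping needed when some legs degenerate (precisely when $l(g)=0$ or $l(h)=0$, i.e.\ in the setting of part (2)), to be the only genuinely fiddly point; everything else is formal, and the ``in particular'' clauses follow at once from $l(gh)=l(g)+l(h)+2m>0$ and $A_{gh}\supseteq[p,q]$.
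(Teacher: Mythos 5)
Your argument is correct. The paper does not prove this proposition itself but simply cites \cite{CM87} and \cite{Pa89}, and your proof is essentially the standard argument given there: part (1) via nearest-point projection onto the characteristic set, and parts (2)--(3) via a single bridge lemma, building the axis of $gh$ as the union of $(gh)$-translates of the fundamental segment $[q,(gh)q]$ and checking no backtracking at the junctions (where your appeal to ``$q$ lies strictly between $p$ and $hp$'', justified by part (1) applied to $h$ and the point $p$, correctly handles the degenerate elliptic cases).
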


\begin{proof}
See for example \cite[1.3]{CM87} and \cite[Propositions~1.6 and 1.8]{Pa89}.
\end{proof}

\section{Deformation spaces of $G$-trees}
\label{sec: deformation spaces}

In this section, we will review definitions, facts, and examples from the theory of deformation spaces of $G$-trees, mainly from \cite{Cl05} and \cite{GL07}.

Let $\mathcal{T}=\mathcal{T}(G)$ be the set of $G$-equivariant isometry classes of nontrivial minimal $G$-trees. We will always speak of ``$G$-trees'' in $\mathcal{T}$ and not of ``$G$-equivariant isometry classes of $G$-trees''.

\begin{de}
Given a $G$-tree $T\in\mathcal{T}$, a subgroup $H\leq G$ is an \textit{elliptic subgroup of $T$} if it fixes a point in $T$. We associate to $T$ its \textit{deformation space} $\mathcal{D}=\mathcal{D}(T)\subseteq\mathcal{T}$ consisting of all $G$-trees that have the same elliptic subgroups as $T$.
\end{de}

The finite subgroups of $G$ are elliptic in every $G$-tree \cite[Proposition~19]{Se}. If two $G$-trees $T,T'\in\mathcal{T}$ lie in the same deformation space then for all $g\in G$ we have $l_T(g)=0\Leftrightarrow l_{T'}(g)=0$. The converse, however, is not true, as an infinitely generated subgroup of $G$ all of whose elements fix a point in $T$ need not be elliptic; it then fixes a unique end of $T$ \cite[Proposition~3.4]{Ti70}.

An edge $e\in E(T)$ of a $G$-tree $T\in\mathcal{T}$ is \emph{collapsible} if its initial and terminal vertex $\iota(e)$ and $\tau(e)$ lie in distinct $G$-orbits and either $G_e=G_{\iota(e)}$ or $G_e=G_{\tau(e)}$. Collapsing all edges in the $G$-orbit of a collapsible edge is an \emph{elementary collapse}. An \emph{elementary expansion} is the reverse of an elementary collapse. A finite sequence of elementary collapses and expansions is an \emph{elementary deformation}. Two $G$-trees $T,T'\in\mathcal{T}$ have the same elliptic subgroups if and only if their underlying nonmetric $G$-trees are related by an elementary deformation \cite[Theorem 4.2]{Fo02}, and if and only if there exist $G$-equivariant (not necessarily simplicial) maps $T\to T'$ and $T'\to T$ \cite[Theorem 3.8]{GL07}.

\subsection{Topologies}
\label{sec: topologies}

Let $\mathcal{D}$ be a deformation space of $G$-trees. We consider three topologies on $\mathcal{D}$: The \emph{equivariant Gromov-Hausdorff topology} (see \cite{Pa89}); the \emph{axes topology}, which is the coarsest topology that makes the assignment of translation length functions $$l\colon\mathcal{D}\to\mathbb{R}^{\mathcal{C}(G)},\ T\mapsto l_T$$ continuous; and the \emph{weak topology}, which describes $\mathcal{D}$ as a union of open cones (see \cite{GL07}). Here, the \emph{open cone} spanned by a $G$-tree $T\in\mathcal{D}$ is the set of $G$-trees obtained by varying the lengths of the finitely many $G$-orbits of edges of $T$, while keeping them positive. Equivalently, it is the set of $G$-trees in $\mathcal{D}$ that are $G$-equivariantly homeomorphic to $T$. For a detailed discussion of the three topologies, including contractibility results, see \cite{Cl05} or \cite{GL07}.

\begin{de}
The multiplicative group of positive real numbers $\mathbb{R}_{>0}$ acts on $\mathcal{D}$ by scaling the metrics on the $G$-trees. The \textit{projectivized deformation space} $\mathcal{PD}$ is the quotient of $\mathcal{D}$ by this action, endowed with the quotient topology.
\end{de}

As a set, we will think of $\mathcal{PD}$ as the covolume-1-section in $\mathcal{D}$. In fact, if we endow $\mathcal{D}$ with the weak topology then the covolume function $\mathcal{D}\to (0,\infty),\ T\mapsto \mathrm{covol}(T)$ is continuous and the natural projection of the covolume-1-section in $\mathcal{D}$ to the projectivized deformation space $\mathcal{PD}$ is a homeomorphism.

When we equip $\mathcal{D}$ with the weak topology, the quotient $\mathcal{PD}$ inherits the structure of a simplicial complex with missing faces.

\subsubsection*{Further facts}

All $G$-trees in a given deformation space $\mathcal{D}$ have the same type (dihedral, linear abelian, genuine abelian, or irreducible; see Section~\ref{sec: trees}). If $\mathcal{D}$ consists of linear abelian or dihedral $G$-trees then the projectivized deformation space $\mathcal{PD}$ is a point \cite[Proposition 3.10]{GL07}. We say that $\mathcal{D}$ is \emph{genuine abelian} or \emph{irreducible} if the $G$-trees in $\mathcal{D}$ are genuine abelian or irreducible respectively, which are the only interesting cases.

The weak topology is finer than the equivariant Gromov-Hausdorff topology, which is finer than the axes topology. A weakly converging sequence also converges in the equivariant Gromov-Hausdorff topology and \emph{a fortiori} in the axes topology. The weak topology and the equivariant Gromov-Hausdorff topology agree on any finite union of open cones of $\mathcal{D}$ \cite[Proposition~5.2]{GL07}. The equivariant Gromov-Hausdorff topology and the axes topology agree if $\mathcal{D}$ is irreducible \cite{Pa89}.

Two irreducible minimal $G$-trees $T$ and $T'$ are $G$-equivariantly isometric if and only if for all $g\in G$ we have $l_T(g)=l_{T'}(g)$ \cite[Theorem~3.7]{CM87}. Therefore, if $\mathcal{D}$ is irreducible, the assignment of translation length functions $l\colon\mathcal{D}\to\mathbb{R}^{\mathcal{C}(G)},\ T\mapsto l_T$ is injective and the axes topology agrees with the subspace topology defined by this inclusion. In contrast, if $\mathcal{D}$ is a genuine abelian deformation space then all $G$-trees in $\mathcal{D}$ have the same translation length function up to scaling \cite[Proposition 3.10]{GL07}.

If some $G$-tree in $\mathcal{D}$ is locally finite then all $G$-trees in $\mathcal{D}$ are locally finite and we say that $\mathcal{D}$ is \emph{locally finite}. All vertex and edge stabilizers of any two $G$-trees in $\mathcal{D}$ are then commensurable as subgroups of $G$ and $\mathcal{PD}$ is a locally finite complex. If $\mathcal{D}$ consists of locally finite $G$-trees with finitely generated vertex stabilizers then the weak topology and the equivariant Gromov-Hausdorff topology agree on all of $\mathcal{D}$ \cite[Proposition 5.4]{GL07}.

\begin{ex}
\label{ex: Outer space}
Let $F_n$ be the free group of rank $n\geq 2$. The deformation space $\mathcal{X}_n$ of minimal $F_n$-trees that are acted on freely is locally finite and irreducible, and all three topologies agree on $\mathcal{X}_n$. The projectivized deformation space $\mathcal{PX}_n$ is better known as \emph{Culler-Vogtmann Outer space} \cite{CV86}.
\end{ex}

\begin{ex}
\label{ex: virtually free group}
More generally, let $G$ be a finitely generated virtually nonabelian free group, i.e., $G$ contains a finitely generated nonabelian free subgroup of finite index. It is a standard result that $G$ admits a minimal action on a simplicial tree $T$ with finite vertex (and edge) stabilizers \cite[Theorem~7.3]{SW79}. Since the finite subgroups of $G$ are elliptic in every $G$-tree, all minimal $G$-trees with finite vertex stabilizers lie in the same deformation space $\mathcal{D}$. The finite-index nonabelian free subgroup of $G$ must act freely on $T$, whence $\mathcal{D}$ is irreducible. The deformation space is locally finite and all three topologies agree on $\mathcal{D}$.
\end{ex}

\begin{ex}
\label{ex: nonelementary GBS groups}
If $G$ is a nonelementary GBS group (as defined in the introduction), all minimal $G$-trees with infinite cyclic vertex and edge stabilizers belong to the same deformation space $\mathcal{D}$ \cite[Corollary~6.10]{Fo02}, which is always locally finite. It is genuine abelian if $G$ is a solvable Baumslag-Solitar group $\BS(1,q)$ with $q\neq\pm 1$. In all other cases, it is irreducible and the three topologies agree on $\mathcal{D}$.
\end{ex}

\subsection{Action of the automorphism group}
\label{sec: automorphisms acting}

The automorphism group $\Aut(G)$ acts on $\mathcal{T}$ from the right by precomposing the $G$-actions on the trees. More precisely, given $T\in\mathcal{T}$ with isometric $G$-action $\rho\colon G\to \Isom(T)$ and $\Phi\in \Aut(G)$, we let $T\Phi$ be the $G$-tree with underlying metric simplicial tree $T$ and $G$-action $\rho\circ\Phi$. One easily sees that the normal subgroup of inner automorphisms $\Inn(G)\leq \Aut(G)$ acts trivially on $\mathcal{T}$ and we obtain an induced action of the outer automorphism group $\Out(G)=\Aut(G)/\Inn(G)$ on $\mathcal{T}$.

If $\phi\in \Out(G)$ leaves the set of elliptic subgroups of $T\in\mathcal{T}$ invariant then $T\phi$ lies in the same deformation space as $T$. In general, however, the twisted $G$-tree $T\phi\in\mathcal{T}$ might lie in a different deformation space.

\begin{de}
\label{de: Out D}
For a $G$-tree $T\in\mathcal{T}$ with associated deformation space $\mathcal{D}$, denote by $\Out_\mathcal{D}(G)\leq \Out(G)$ the subgroup of all automorphisms that leave the set of elliptic subgroups of $T$ invariant. The action of $\Out(G)$ on $\mathcal{T}$ restricts to an action of $\Out_\mathcal{D}(G)$ on $\mathcal{D}$.
\end{de}

\begin{prop}\label{prop: open cones to open cones}
The group $\Out_{\mathcal{D}}(G)$ acts on $\mathcal{D}$ by mapping open cones to open cones of the same dimension. For every $G$-tree $T\in\mathcal{D}$ only finitely many $G$-trees in the $\Out_{\mathcal{D}}(G)$-orbit of $T$ lie in the open cone spanned by $T$. The action of $\Out_\mathcal{D}(G)$ on $\mathcal{D}$ commutes with the action of $\mathbb{R}_{>0}$ and thus descends to an action on $\mathcal{PD}$.
\end{prop}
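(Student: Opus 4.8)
The plan is to analyze the action of a single automorphism $\Phi \in \Aut(G)$ representing a class in $\Out_{\mathcal D}(G)$ and track what it does to the combinatorial data of a $G$-tree. Recall that the open cone spanned by $T$ is, equivalently, the set of $G$-trees in $\mathcal{D}$ that are $G$-equivariantly homeomorphic to $T$. So the first step is to observe that the twisting operation $T \mapsto T\Phi$ leaves the underlying metric simplicial tree unchanged and only changes the $G$-action $\rho$ to $\rho \circ \Phi$. This means $T\Phi$ has the same underlying simplicial structure as $T$, the same edge lengths, and the same number of $G$-orbits of edges (the orbit partition is merely relabeled via $\Phi$). Since $\Phi \in \Out_{\mathcal{D}}(G)$, the tree $T\Phi$ lies in $\mathcal{D}$; and because it is $G$-equivariantly isometric to nothing other than itself but $G$-equivariantly \emph{homeomorphic} to $T$ (the identity map on the underlying tree is a $G$-equivariant homeomorphism once one allows the twist), it spans an open cone of the same dimension as the cone spanned by $T$. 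More carefully: a point in the open cone of $T$ is a choice of positive lengths $(\ell_1,\dots,\ell_k)$ on the $G$-orbits of edges $e_1,\dots,e_k$; applying $\Phi$ sends this to the tree with the same underlying complex but $G$-action twisted, and the edge-orbits are permuted according to how $\Phi$ permutes the relevant conjugacy classes of stabilizers, giving a linear isomorphism between the two cones. Hence $\Out_{\mathcal D}(G)$ maps open cones to open cones of the same dimension.

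Second, for the finiteness statement: fix $T \in \mathcal{D}$ with open cone $C$. If $T\Phi_1$ and $T\Phi_2$ both lie in $C$, then they are $G$-equivariantly homeomorphic to $T$, and I want to bound the number of such twists. The key point is that $\Out_{\mathcal{D}}(G)$ acting on the set of $G$-trees $G$-equivariantly homeomorphic to $T$ factors through a \emph{finite} group, namely (a quotient of) the group of simplicial automorphisms of the finite quotient graph of groups $G \backslash T$ that respect the graph-of-groups structure. Concretely: the combinatorial type of $T$ is recorded by the finite marked graph of groups, and two twists $T\Phi_1, T\Phi_2$ lie in the same open cone precisely when they determine the same underlying unmarked graph-of-groups; the possible "remarkings" form a subquotient of the (finite) automorphism group of the finite graph $G\backslash T$. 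An alternative, cleaner route is to use translation length functions: $T\Phi$ has translation length function $g \mapsto l_T(\Phi(g))$, and since $T$ has finitely many $G$-orbits of edges its length function has discrete image; the set of $G$-trees in the open cone $C$ with a prescribed combinatorial type is parametrized by the cone of edge lengths, and the orbit of a single point under the (discrete, properly discontinuous) $\Out_{\mathcal{D}}(G)$-action meets $C$ only finitely often by a standard covering-space / deck-transformation argument. I expect the discreteness of the length spectrum (for trees with finitely many edge-orbits, as stated in the excerpt) together with the fact that two $G$-equivariantly isometric irreducible $G$-trees coincide to be the workhorses here.

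Third, the final assertion that the $\Out_{\mathcal{D}}(G)$-action commutes with the $\mathbb{R}_{>0}$-scaling action is immediate: scaling the metric on $T$ by $\lambda > 0$ and then twisting by $\Phi$ produces the same metric simplicial tree with the same $G$-action $\rho\circ\Phi$ and scaled metric as twisting first and then scaling, since twisting does not touch the metric at all. Therefore the action descends to $\mathcal{PD}$, and it still maps (projectivized) open simplices to open simplices of the same dimension with the same finiteness property. The main obstacle I anticipate is making the finiteness step fully rigorous in the possibly locally infinite setting --- one must be careful that "finitely many $G$-orbits of edges" (which holds for minimal $G$-trees by cocompactness, as noted in the excerpt) is really all that is needed, and that vertex stabilizers, though possibly infinite, do not cause the remarking group to become infinite; this is handled by noting that only the \emph{quotient graph} and the \emph{conjugacy-class data} of stabilizers, not the stabilizers themselves, enter into determining the open cone.
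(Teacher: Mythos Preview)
Your first and third paragraphs are essentially correct and match the paper's approach, though one sentence is misleading: the identity map $T\to T\Phi$ is \emph{not} $G$-equivariant in general (if it were, $T$ and $T\Phi$ would be the same point in $\mathcal{D}$). What is true, and what the paper uses, is that if $T$ and $T'$ are $G$-equivariantly homeomorphic then so are $T\Phi$ and $T'\Phi$ (the same homeomorphism works, since both actions are twisted by the same $\Phi$). This is enough to send open cones to open cones; the equality of dimensions then follows because $T$ and $T\Phi$ have the same underlying simplicial tree and the same edge-orbit partition.

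The genuine gap is in your finiteness argument. Both routes you sketch are more elaborate than needed and neither is complete: the graph-of-groups automorphism group can be infinite when vertex groups are infinite (you flag this yourself), and invoking ``properly discontinuous'' is circular since that is essentially what you are trying to prove. The paper's argument is much shorter and avoids all of this. Suppose $T\Phi$ lies in the open cone spanned by $T$. Then $T\Phi$ is $G$-equivariantly isometric to $(T,d')$ for some metric $d'$ giving lengths $(\ell_1',\dots,\ell_k')$ to the $k$ edge-orbits of $T$. But $T\Phi$ has the \emph{same underlying metric simplicial tree} as $T$, and the $G$-equivariant isometry $(T,d')\to T\Phi$ must carry edge-orbits to edge-orbits; hence the tuple $(\ell_1',\dots,\ell_k')$ is a permutation of the original edge-orbit lengths $(\ell_1,\dots,\ell_k)$ of $T$. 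Since there are only finitely many such permutations, only finitely many points of the orbit can lie in the cone. No appeal to graph-of-groups automorphisms, length spectra, or proper discontinuity is needed.
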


\begin{proof}
Let $T,T'\in\mathcal{D}$ and $\phi\in \Out_{\mathcal{D}}(G)$. If $T$ and $T'$ are $G$-equivariantly homeomorphic then $T\phi$ and $T'\phi$ are $G$-equivariantly homeomorphic as well, and $\Out_{\mathcal{D}}(G)$ acts on $\mathcal{D}$ by mapping open cones to open cones. Since $T$ and $T\phi$ have the same underlying metric simplicial tree, their open cones have the same dimension and the action of $\Out_\mathcal{D}(G)$ on $\mathcal{D}$ commutes with the action of $\mathbb{R}_{>0}$. In order to prove the second statement, suppose that $T$ and $T\phi$ are $G$-equivariantly homeomorphic. Then $T\phi$ is $G$-equivariantly isometric to $(T,d')$, where $d'$ is a metric on $T$ obtained by permuting the lengths of the $G$-orbits of edges of $T$, of which there are only finitely many.
\end{proof}

\subsubsection*{The modular homomorphism}

If $\mathcal{D}$ is a deformation space of locally finite $G$-trees, all vertex and edge stabilizers of all $G$-trees in $\mathcal{D}$ are commensurable as subgroups of $G$. We then define the \emph{modular homomorphism} $\mu=\mu(\mathcal{D})\colon G\to(\mathbb{Q}_{>0},\times)$ by $$\mu(g)=\frac{[H:(H\cap gHg^{-1})]}{[gHg^{-1}:(H\cap gHg^{-1})]}$$ where $H$ is any subgroup of $G$ commensurable with a vertex or edge stabilizer of some $G$-tree in $\mathcal{D}$. Indeed, $\mu$ does not depend on the choice of $H$. We say that $\mathcal{D}$ has \emph{no nontrivial integral modulus} if $\mathrm{im}(\mu)\cap\mathbb{Z}=\left\{1\right\}$.

\begin{lem}[{\cite[Lemma 2.4]{Le07}}]
\label{lem: GBS integral modulus}
Let $G$ be a nonelementary GBS group. The deformation space $\mathcal{D}$ of minimal $G$-trees with infinite cyclic vertex and edge stabilizers (Example~\ref{ex: nonelementary GBS groups}) has no nontrivial integral modulus if and only if $G$ contains no solvable Baumslag-Solitar group $\BS(1,n)$ with $n\geq 2$.
\end{lem}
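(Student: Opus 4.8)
The plan is to prove the two implications separately, both in contrapositive form. Before starting, recall two elementary facts. First, since $\mathcal{D}$ is locally finite (Example~\ref{ex: nonelementary GBS groups}) the modular homomorphism $\mu=\mu(\mathcal{D})\colon G\to(\mathbb{Q}_{>0},\times)$ is defined, and by construction it may be evaluated using \emph{any} infinite cyclic subgroup $H\le G$ that is commensurable with a vertex or edge stabilizer of some $G$-tree in $\mathcal{D}$; recall that all these stabilizers (hence all their conjugates) are mutually commensurable. Second, if $g\in G$ fixes a vertex $w$ of some $T\in\mathcal{D}$, then taking $H=G_w$ gives $gHg^{-1}=H$, whence $\mu(g)=1$.

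\emph{If $G$ contains $\BS(1,n)$ with $n\ge 2$, then $\mathcal{D}$ has nontrivial integral modulus.} Fix an embedding with image $\langle a,\tau\rangle\le G$, where $\tau a\tau^{-1}=a^n$ and $\langle a\rangle\cong\mathbb{Z}$. For every $T\in\mathcal{D}$, conjugation invariance of translation length and the identity $l_T(a^n)=n\,l_T(a)$ give $l_T(a)=l_T(\tau a\tau^{-1})=l_T(a^n)=n\,l_T(a)$, so $l_T(a)=0$; thus $a$ is elliptic and fixes a vertex $w$, with $\langle a\rangle\le G_w\cong\mathbb{Z}$ of finite index. Hence $\langle a\rangle$ is an admissible choice of $H$, and since $\tau\langle a\rangle\tau^{-1}=\langle a^n\rangle\le\langle a\rangle$ we read off $\mu(\tau)=[\langle a\rangle:\langle a^n\rangle]/[\langle a^n\rangle:\langle a^n\rangle]=n$. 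Therefore $n\in\operatorname{im}(\mu)\cap\mathbb{Z}$ and this set is nontrivial.

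\emph{If $\mathcal{D}$ has nontrivial integral modulus, then $G$ contains $\BS(1,m)$ for some $m\ge 2$.} Since $\operatorname{im}(\mu)$ is a subgroup of $\mathbb{Q}_{>0}$, the hypothesis yields an integer $n\ge 2$ and $g\in G$ with $\mu(g)=n$. Fix a vertex stabilizer $H=\langle h\rangle\cong\mathbb{Z}$ of some $T\in\mathcal{D}$. As $H$ and $gHg^{-1}$ are commensurable, $H\cap gHg^{-1}$ is infinite cyclic, of index $p$ in $H$ and index $q$ in $gHg^{-1}$ with $p/q=\mu(g)=n$, so $p=nq$; thus $\langle h^p\rangle=H\cap gHg^{-1}=\langle (ghg^{-1})^q\rangle=\langle gh^qg^{-1}\rangle$, and comparing generators of these two infinite cyclic groups gives $h^{nq}=h^p=gh^{\varepsilon q}g^{-1}$ for some $\varepsilon\in\{\pm1\}$. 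Putting $b:=h^q$ (of infinite order, since $h$ is) this reads $b^n=gb^{\varepsilon}g^{-1}$; setting $(\gamma,m):=(g,n)$ if $\varepsilon=1$ and $(\gamma,m):=(g^2,n^2)$ if $\varepsilon=-1$ (using $g^2bg^{-2}=(gbg^{-1})^{-n}=b^{n^2}$ in the latter case) yields $\gamma b\gamma^{-1}=b^m$ with $m\ge 2$ and $b$ of infinite order. Finally I would check that the canonical surjection $\BS(1,m)=\langle s,x\mid sxs^{-1}=x^m\rangle\twoheadrightarrow\langle\gamma,b\rangle$, $s\mapsto\gamma$, $x\mapsto b$, is injective: identifying $\BS(1,m)$ with $\mathbb{Z}[1/m]\rtimes_m\mathbb{Z}$, any normal subgroup $N$ meets $\mathbb{Z}[1/m]$ in a subgroup invariant under multiplication by $m^{\pm1}$, hence in an ideal $d\,\mathbb{Z}[1/m]$ of the ring $\mathbb{Z}[1/m]$; as $b$ has infinite order in $G$, $N$ contains no nonzero integer, forcing $d=0$, so $N$ embeds into $\BS(1,m)/\mathbb{Z}[1/m]\cong\mathbb{Z}$ and is cyclic; a one-line conjugation computation (conjugating a putative generator $(u,k)$ of $N$ by $(c,0)$, $c\in\mathbb{Z}[1/m]$, keeps it inside $\langle(u,k)\rangle$ only if $(1-m^k)c=0$ for all $c$, i.e.\ if $(u,k)=(0,0)$) then rules out any nontrivial such $N$. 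Hence $\langle\gamma,b\rangle\cong\BS(1,m)\le G$.

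\emph{Expected main obstacle.} Everything apart from the last step is bookkeeping resting on the translation-length facts of Section~\ref{sec: trees} and the definition of $\mu$. The one substantive point is the identification of $\langle\gamma,b\rangle$ with $\BS(1,m)$: a relation $\gamma b\gamma^{-1}=b^m$ only produces a \emph{quotient} of $\BS(1,m)$, and one must use the infinite order of $b$ — together with the rigidity of the $\langle m\rangle$-module structure on $\mathbb{Z}[1/m]$ — to exclude the proper quotients. A minor nuisance is the sign $\varepsilon$, which is why $m$ may come out as $n^2$ rather than $n$; this is harmless since the statement only asks for \emph{some} $\BS(1,m)$ with $m\ge 2$.
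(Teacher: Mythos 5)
Your proof is correct. Note that the paper does not prove this lemma itself; it is imported verbatim from \cite[Lemma 2.4]{Le07}, so there is no in-paper argument to compare against, and what you have written is a complete, self-contained proof along the standard lines. The forward implication is fine: $l_T(a)=l_T(\tau a\tau^{-1})=l_T(a^n)=n\,l_T(a)$ forces $a$ to be elliptic, so $\langle a\rangle$ is finite index in an infinite cyclic vertex stabilizer and is therefore an admissible choice of $H$, giving $\mu(\tau)=n$. In the converse you correctly identify the one genuinely delicate point: the relation $\gamma b\gamma^{-1}=b^m$ a priori only exhibits a quotient of $\BS(1,m)$. Your verification that the kernel $N$ of $\BS(1,m)\twoheadrightarrow\langle\gamma,b\rangle$ is trivial is sound; one can shortcut it slightly by noting that every element of $\mathbb{Z}[1/m]\le\BS(1,m)$ is conjugate into $\langle x\rangle$, so $N\cap\mathbb{Z}[1/m]=0$ follows directly from $b$ having infinite order, and then for any $(u,k)\in N$ with $k\neq 0$ the commutator $[(u,k),(1,0)]=(m^k-1,0)$ would be a nonzero element of $N\cap\mathbb{Z}[1/m]$, a contradiction. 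The sign $\varepsilon$ is also handled correctly: passing to $g^2$ and $n^2$ when $\varepsilon=-1$ is harmless since the statement only requires some $\BS(1,m)$ with $m\geq 2$.
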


\begin{rem}
The group $\BS(1,-n)$ contains a subgroup isomorphic to $\BS(1,n^2)$. Hence, if $G$ contains no solvable Baumslag-Solitar group $\BS(1,n)$ with $n\geq 2$ then it contains no solvable Baumslag-Solitar group $\BS(1,q)$ with $q\neq \pm 1$ and, in particular, the deformation space $\mathcal{D}$ is irreducible.
\end{rem}

A subgroup $H\leq G$ is \emph{small in $G$} (as in \cite[Section~8]{GL07}) if there does not exist a $G$-tree in which the axes of any two hyperbolic group elements of $H$ intersect in a compact set. Being small in $G$ is a commensurability invariant and stable under taking subgroups.

\begin{prop}[{\cite[Proposition 8.6]{GL07}}]
\label{prop: finitely many orbits of simplices}
Let $\mathcal{D}$ be a deformation space of locally finite irreducible $G$-trees whose vertex and edge stabilizers are all commensurable with a finitely generated subgroup $H\leq G$.
\begin{itemize}
\item[$(1)$] If $H$ is small in $G$ then $\Out_\mathcal{D}(G)=\Out(G)$.
\item[$(2)$] If every subgroup of $G$ commensurable with $H$ has finite outer automorphism group and $\mathcal{D}$ has no nontrivial integral modulus then $\Out_\mathcal{D}(G)$ acts on $\mathcal{D}$ with finitely many orbits of open cones (and on the projectivized deformation space $\mathcal{PD}$ with finitely many orbits of open simplices).
\end{itemize}
\end{prop}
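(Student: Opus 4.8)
The plan is to treat the two parts separately; both come down to understanding how $\Aut(G)$ interacts with the elliptic subgroups and with the graph-of-groups data of the $G$-trees in $\mathcal{D}$. \emph{Part (1).} An element of $\Out(G)$ lies in $\Out_{\mathcal{D}}(G)$ exactly when it carries $\mathcal{D}$ to itself, and since a subgroup $K\le G$ is elliptic in $T\phi$ if and only if $\phi(K)$ is elliptic in $T$, this is the assertion that the set $\mathcal{E}$ of elliptic subgroups of $T$ is $\Aut(G)$-invariant. First I would record the data that is automatically preserved: $T\phi$ is again a nontrivial minimal locally finite $G$-tree; it is again irreducible (if $F\le G$ is free of rank $2$ and acts freely on $T$, then $\phi^{-1}(F)$ acts freely on $T\phi$); its vertex and edge stabilizers are the $\phi$-preimages of those of $T$, hence commensurable with $\phi^{-1}(H)$; and $\phi^{-1}(H)$ is again small in $G$, because smallness in $G$ is defined purely in terms of $G$-trees and their hyperbolic axes, which $\phi$ permutes. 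The crux is then a characterization lemma, valid precisely because $H$ is small and finitely generated: in an irreducible $G$-tree with finitely generated edge stabilizers that are small in $G$, the collection of elliptic subgroups is intrinsic --- for instance, $\mathcal{E}$ equals the set of subgroups of $G$ that are elliptic in every such $G$-tree. Granting this, $\mathcal{E}$ is visibly $\Aut(G)$-invariant. Proving the lemma --- in particular ruling out that $T\phi$ could belong to a genuinely different deformation space, say one with vertex groups commensurable with a small subgroup not commensurable with $H$ --- is where smallness enters decisively, and I expect it to be the main difficulty in Part (1).

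\emph{Part (2).} Here I would begin with a reduction. A $G$-tree $T\phi$ in the $\Out(G)$-orbit of $T$ lies in $\mathcal{D}$ if and only if $\phi\in\Out_{\mathcal{D}}(G)$, so every $\Out(G)$-orbit of $G$-trees meets $\mathcal{D}$ in at most one $\Out_{\mathcal{D}}(G)$-orbit; moreover the $\Out(G)$-orbit of a $G$-tree is recorded exactly by the isomorphism class of its quotient graph of groups, since changing the marking is the same as applying an automorphism of $G$. As an open cone of $\mathcal{D}$ is determined by the $G$-equivariant homeomorphism type of the underlying nonmetric $G$-tree, it therefore suffices to bound the number of isomorphism classes of graphs of groups with fundamental group $G$ that arise from $G$-trees in $\mathcal{D}$. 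I would do this in two steps. \emph{Step A} (bounded complexity): the number of $G$-orbits of edges of a $G$-tree in $\mathcal{D}$ is bounded --- an accessibility-type statement, using local finiteness, finite generation of the vertex groups, and the absence of redundant vertices --- so that there are finitely many possibilities for the underlying graph. \emph{Step B} (bounded labellings): with the graph fixed, the isomorphism types of the finitely generated vertex and edge groups are constrained because they are commensurable with $H$; finiteness of the outer automorphism groups of all subgroups commensurable with $H$ then shows that only finitely many systems of edge monomorphisms can occur up to isomorphism, \emph{except} for the possibility of an infinite family of ``unfoldings'' distinguished by the relative indices of the edge inclusions --- and this is exactly what the hypothesis that $\mathcal{D}$ has no nontrivial integral modulus rules out (it is the obstruction responsible for $\BS(1,n)$ with $n\ge 2$ having infinitely many orbits). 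Combining Steps A and B bounds the number of $\Out_{\mathcal{D}}(G)$-orbits of open cones; intersecting with the covolume-$1$ section of $\mathcal{D}$ then yields finitely many orbits of open simplices in $\mathcal{PD}$.

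\emph{Main obstacle.} The two genuinely hard points are the characterization lemma in Part (1) and, in Part (2), the accessibility bound of Step A together with the precise use of the no-integral-modulus hypothesis in Step B: one must show that, once the underlying graph is fixed, only finitely many ``moduli'' of edge inclusions are realized by $G$-trees in $\mathcal{D}$ and that these fall into finitely many $\Out_{\mathcal{D}}(G)$-orbits. I would reduce this last point to a statement about finitely generated subgroups of $\mathrm{im}(\mu)$ together with an accessibility bound for the trees in a fixed deformation space.
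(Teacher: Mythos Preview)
The paper does not give its own proof of this proposition: it is stated with a citation to \cite[Proposition~8.6]{GL07} and followed immediately by examples, so there is no in-paper argument to compare against. Your proposal is therefore not being measured against anything here; the result is imported wholesale from Guirardel--Levitt.

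That said, a brief comment on the proposal itself. What you have written is a plan, not a proof: in Part~(1) you invoke a ``characterization lemma'' asserting that in an irreducible $G$-tree with small finitely generated edge stabilizers the elliptic subgroups are intrinsic, but you do not prove it, and you correctly flag it as the crux. In Part~(2) your Step~A/Step~B decomposition (bound the underlying graph via an accessibility-type argument, then bound the edge-group data using finiteness of outer automorphism groups and the modulus hypothesis) is the right shape and matches the strategy in \cite{GL07}, but again the substantive work --- the accessibility bound and the precise mechanism by which ``no nontrivial integral modulus'' cuts off infinite families of edge inclusions --- is deferred rather than carried out. So the outline is sound and well-oriented, but as it stands it is a roadmap to the Guirardel--Levitt proof rather than an independent argument; to complete it you would essentially be rewriting the relevant sections of \cite{GL07}.
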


\begin{ex}
The unprojectivized Outer space $\mathcal{X}_n$ (Example~\ref{ex: Outer space}) is locally finite and irreducible, and all vertex and edge stabilizers of the $F_n$-trees in $\mathcal{X}_n$ are trivial. We clearly have $\Out_{\mathcal{X}_n}(F_n)=\Out(F_n)$, and $\Out(F_n)$ acts on Outer space $\mathcal{PX}_n$ with finitely many orbits of simplices.
\end{ex}

\begin{ex}
\label{ex: virtually free}
More generally, let $G$ be a finitely generated virtually nonabelian free group. The deformation space $\mathcal{D}$ of minimal $G$-trees with finite vertex stabilizers is locally finite and irreducible (Example~\ref{ex: virtually free group}). Choosing $H=\left\{1\right\}$, we see that $\mu(\mathcal{D})\equiv 1$. We have $\Out_\mathcal{D}(G)=\Out(G)$ and $\Out(G)$ acts on $\mathcal{PD}$ with finitely many orbits of simplices.
\end{ex}

\begin{ex}
\label{ex: GBS orbits}
Let $G$ be a nonelementary GBS group that contains no solvable Baumslag-Solitar group $\BS(1,n)$ with $n\geq 2$. The deformation space $\mathcal{D}$ of minimal $G$-trees with infinite cyclic vertex and edge stabilizers is locally finite and irreducible (Example~\ref{ex: nonelementary GBS groups}). Let $H$ be any vertex or edge stabilizer of any $G$-tree in $\mathcal{D}$. If $G$ acts on a tree such that $H$ acts hyperbolically then all nontrivial elements of $H$ have the same hyperbolic axis (because $H$ is infinite cyclic), whence $H$ is small in $G$ and $\Out_\mathcal{D}(G)=\Out(G)$. Every subgroup of $G$ commensurable with $H$, being virtually cyclic, has finite outer automorphism group. By Lemma~\ref{lem: GBS integral modulus}, $\mathcal{D}$ has no nontrivial integral modulus and hence $\Out(G)$ acts on $\mathcal{PD}$ with finitely many orbits of simplices.
\end{ex}

\section{The Lipschitz metric}
\label{sec: Lipschitz metric}

Let $\mathcal{D}$ be a deformation space of $G$-trees and $T,T'\in\mathcal{D}$. As $T$ and $T'$ lie in the same deformation space, there exists a $G$-equivariant map $f\colon T\to T'$, which we may choose to be Lipschitz continuous. We denote by $\sigma(f)$ its Lipschitz constant.

Every $G$-equivariant Lipschitz map $f\colon T\to T'$ is $G$-equivariantly homotopic relative to the vertices of $T$ to a $G$-equivariant Lipschitz map $f'\colon T\to T'$ that is linear (i.e., either constant or an immersion with constant slope) on edges. The Lipschitz constant $\sigma(f')$ is then given by the maximal slope of $f'$ on the finitely many $G$-orbits of edges of $T$ and we have $\sigma(f')\leq\sigma(f)$. We may therefore always assume every $G$-equivariant Lipschitz map $f\colon T\to T'$ to be linear on edges without increasing its Lipschitz constant.

\begin{de}
Define $\sigma(T,T')=\inf_f \sigma(f)$, where $f$ ranges over all $G$-equivariant Lipschitz maps from $T$ to $T'$.
\end{de}

Recall that, as a set, we think of the projectivized deformation space $\mathcal{PD}$ as the covolume-1-section in $\mathcal{D}$. With this convention, we can assign to each pair of projectivized $G$-trees $(T,T')\in\mathcal{PD}\times\mathcal{PD}$ the well-defined value $\sigma(T,T')$.

\begin{deprop}
\label{prop: Lipschitz metric}
The function $$d_{Lip}\colon\mathcal{PD}\times\mathcal{PD}\to\mathbb{R},\ (T,T')\mapsto\log\left(\sigma(T,T')\right)$$ is an asymmetric pseudometric on $\mathcal{PD}$. That is, for all $T,T',T''\in\mathcal{PD}$ we have
\begin{enumerate}
\item $d_{Lip}(T,T')\geq 0$;
\item\label{it: distance zero} if $T$ and $T'$ are $G$-equivariantly isometric then $d_{Lip}(T,T')=0$;
\item $d_{Lip}(T,T'')\leq d_{Lip}(T,T')+d_{Lip}(T',T'')$.
\end{enumerate}
We call $d_{Lip}$ the \emph{Lipschitz metric}.
\end{deprop}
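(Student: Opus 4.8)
The plan is to verify the three axioms directly from the definition of $\sigma(T,T')$, using the composition behaviour of Lipschitz maps and the covolume-1 normalization. The key preliminary observations are: (i) if $f\colon T\to T'$ and $g\colon T'\to T''$ are $G$-equivariant Lipschitz maps, then $g\circ f\colon T\to T''$ is $G$-equivariant Lipschitz with $\sigma(g\circ f)\leq\sigma(g)\,\sigma(f)$; and (ii) a $G$-equivariant Lipschitz map between covolume-1 $G$-trees must have Lipschitz constant at least $1$, since it is surjective (minimal $G$-trees admit only surjective equivariant maps) and cannot decrease the covolume of the quotient graph — a Lipschitz map with constant $<1$ would strictly decrease the volume of $G\backslash T'$ below $1$.

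For axiom (1), fix any $G$-equivariant Lipschitz map $f\colon T\to T'$, which exists because $T,T'$ lie in the same deformation space. By observation (ii), $\sigma(f)\geq 1$. Taking the infimum over all such $f$ gives $\sigma(T,T')\geq 1$, hence $d_{Lip}(T,T')=\log\sigma(T,T')\geq 0$. For axiom (2), if $T$ and $T'$ are $G$-equivariantly isometric, the isometry (and its inverse) are $G$-equivariant Lipschitz maps with Lipschitz constant $1$, so $\sigma(T,T')\leq 1$; combined with axiom (1) this forces $\sigma(T,T')=1$ and $d_{Lip}(T,T')=0$. For axiom (3), given $\varepsilon>0$ choose $G$-equivariant Lipschitz maps $f\colon T\to T'$ with $\sigma(f)\leq\sigma(T,T')+\varepsilon$ and $g\colon T'\to T''$ with $\sigma(g)\leq\sigma(T',T'')+\varepsilon$; then $g\circ f$ is an admissible competitor for $\sigma(T,T'')$ with $\sigma(g\circ f)\leq\sigma(g)\sigma(f)$, so $\sigma(T,T'')\leq(\sigma(T,T')+\varepsilon)(\sigma(T',T'')+\varepsilon)$. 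Letting $\varepsilon\to 0$ and taking logarithms yields $d_{Lip}(T,T'')\leq d_{Lip}(T,T')+d_{Lip}(T',T'')$.

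There are two minor points to dispatch along the way. First, one should confirm that $\sigma(T,T')$ is finite, i.e., that some $G$-equivariant \emph{Lipschitz} map exists; this follows because a $G$-equivariant map $T\to T'$ exists (both trees lie in $\mathcal{D}$), and, as noted in the discussion preceding the definition of $\sigma(T,T')$, it can be homotoped rel vertices to one that is linear on the finitely many $G$-orbits of edges, which is automatically Lipschitz with constant the maximal slope. Second, one should check that the value $\sigma(T,T')$ assigned to a pair in $\mathcal{PD}\times\mathcal{PD}$ via their covolume-1 representatives is well defined, which is immediate since the covolume-1 representative of a point in $\mathcal{PD}$ is unique.

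The only step requiring genuine care is observation (ii), the lower bound $\sigma(f)\geq 1$ for an equivariant Lipschitz self-competitor between covolume-1 trees. The cleanest argument is that $f$ descends to a Lipschitz surjection $\bar f\colon G\backslash T\to G\backslash T'$ of finite metric graphs, and a $c$-Lipschitz surjection between finite metric graphs satisfies $\mathrm{vol}(\text{target})\leq c\cdot\mathrm{vol}(\text{source})$ (cover the target by images of edges); with both volumes equal to $1$ this gives $c\geq 1$. I expect this to be the main obstacle only in the bookkeeping sense — making the descent to quotient graphs and the volume comparison precise — rather than in any conceptual sense.
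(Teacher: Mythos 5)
Your proposal is correct and follows essentially the same route as the paper: the lower bound $\sigma(f)\geq 1$ via surjectivity of the induced map on covolume-$1$ quotient graphs and the volume comparison, immediateness of (2), and submultiplicativity of Lipschitz constants under composition for the triangle inequality. The extra remarks on finiteness of $\sigma(T,T')$ and well-definedness on covolume-$1$ representatives are handled in the paper's surrounding discussion rather than in the proof itself, but they do not change the argument.
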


\begin{proof}
To prove $(1)$, let $f\colon T\to T'$ be a $G$-equivariant Lipschitz map. We will show that $\sigma(f)$ is bounded below by 1. Since the $G$-trees $T$ and $T'$ are minimal, both $f$ and the induced map on quotient graphs $G\backslash f\colon G\backslash T\to G\backslash T'$ are surjective. We have $\sigma(G\backslash f)=\sigma(f)$ and $\mathrm{vol}(G\backslash T)=\mathrm{vol}(G\backslash T')=1$. If now $\sigma(f)<1$ then $$\mathrm{vol}(\mathrm{im}(G\backslash f))\leq\sigma(f)\cdot \mathrm{vol}(G\backslash T)<1$$ contradicting the surjectivity of $G\backslash f$.

Statement $(2)$ is immediate. In order to show $(3)$, observe that for any sequence of $G$-equivariant Lipschitz maps $T\stackrel{f}{\to}T'\stackrel{f'}{\to}T''$ we have $\sigma(T,T'')\leq\sigma(f'\circ f)$ and $\sigma(f'\circ f)\leq\sigma(f)\cdot\sigma(f')$, whence
\begin{align*}
\log\left(\sigma(T,T'')\right) \leq \inf_{f,f'}\log\left(\sigma(f'\circ f)\right) &\leq \inf_{f,f'}\log\left(\sigma(f)\cdot \sigma(f')\right)\\
&= \inf_{f,f'}\left(\log\left(\sigma(f)\right)+\log\left(\sigma(f')\right)\right)\\
&= \inf_f\log\left(\sigma(f)\right) + \inf_{f'}\log\left(\sigma(f')\right)\\
&= \log\left(\sigma(T,T')\right)+\log\left(\sigma(T',T'')\right).\qedhere
\end{align*}
\end{proof}

\begin{prop} 
The group $\Out_{\mathcal{D}}(G)$ acts on $(\mathcal{PD},d_{Lip})$ by isometries, i.e., for all $T,T'\in\mathcal{PD}$ and $\phi\in \Out_{\mathcal{D}}(G)$ we have $d_{Lip}(T\phi,T'\phi)=d_{Lip}(T,T')$.
\end{prop}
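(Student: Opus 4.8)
The plan is to exploit that twisting a $G$-tree by $\phi$ changes only the $G$-action and not the underlying metric simplicial tree, so that the two sides of the claimed equality are computed from literally the same maps carrying literally the same Lipschitz constants. Fix a representative $\Phi\in\Aut(G)$ of $\phi$ and write $\rho_T,\rho_{T'}$ for the isometric $G$-actions defining $T,T'$, so that $T\phi$ and $T'\phi$ have the same underlying metric trees as $T$ and $T'$ but carry the actions $\rho_T\circ\Phi$ and $\rho_{T'}\circ\Phi$.

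As a preliminary I would record that twisting preserves covolume: since $\Phi$ is surjective, the orbits of $\rho_T\circ\Phi$ on $T$ coincide with the orbits of $\rho_T$, hence $G\backslash T\phi$ and $G\backslash T$ are the same finite metric graph and $\mathrm{covol}(T\phi)=\mathrm{covol}(T)$. In particular $T\phi$ and $T'\phi$ again lie in the covolume-$1$ section, so evaluating $d_{Lip}$ at $(T\phi,T'\phi)$ is meaningful under our standing identification of $\mathcal{PD}$ with that section; that the action descends to $\mathcal{PD}$ at all is Proposition~\ref{prop: open cones to open cones}.

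The one substantive point is the following identification: a map $f$ from the underlying tree of $T$ to that of $T'$ is $G$-equivariant from $T$ to $T'$ if and only if it is $G$-equivariant from $T\phi$ to $T'\phi$. Indeed, for $g\in G$ and $x\in T$,
\[
f\bigl((\rho_T\circ\Phi)(g)\,x\bigr)=f\bigl(\rho_T(\Phi(g))\,x\bigr)=\rho_{T'}(\Phi(g))\,f(x)=(\rho_{T'}\circ\Phi)(g)\,f(x),
\]
where the middle equality applies $\rho_T$--$\rho_{T'}$-equivariance of $f$ to the group element $\Phi(g)\in G$; the converse direction is obtained the same way after replacing $\Phi$ by $\Phi^{-1}$. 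Since the metrics on $T$ and $T\phi$ (resp.\ $T'$ and $T'\phi$) agree, the Lipschitz constant $\sigma(f)$ is unchanged when $f$ is reinterpreted as a map $T\phi\to T'\phi$.

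Putting this together, the set of $G$-equivariant Lipschitz maps $T\to T'$ equals the set of $G$-equivariant Lipschitz maps $T\phi\to T'\phi$, with matching Lipschitz constants, so $\sigma(T\phi,T'\phi)=\inf_f\sigma(f)=\sigma(T,T')$, and taking logarithms yields $d_{Lip}(T\phi,T'\phi)=d_{Lip}(T,T')$. I do not anticipate a genuine obstacle here: the argument is formal, and the only steps that call for a little care are checking that the covolume normalization is respected and writing out the equivariance identification cleanly rather than merely asserting it.
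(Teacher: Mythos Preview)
Your proof is correct and follows exactly the paper's approach: the paper's proof is the single sentence ``Every $G$-equivariant map from $T$ to $T'$ is also $G$-equivariant with respect to the actions twisted along $\phi$, and vice versa,'' and you have simply written this out carefully, including the easy but worth-noting check that twisting preserves covolume.
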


\begin{proof}
Every $G$-equivariant map from $T$ to $T'$ is also $G$-equivariant with respect to the actions twisted along $\phi$, and vice versa.
\end{proof}

The following example demonstrates why we speak of the Lipschitz metric as an ``asymmetric pseudometric'':

\begin{ex}
\label{ex: not an asymmetric metric}
In general we have $d_{Lip}(T,T')\neq d_{Lip}(T',T)$ (see \cite{AB12} for examples in the special case of Outer space; see also the remark after Proposition~\ref{prop: symmetrized metric}). Moreover, $d_{Lip}(T,T')=0$ does not generally imply that $T$ and $T'$ are $G$-equivariantly isometric (see Proposition~\ref{prop: asymmetric metric} for an exception in the case of Outer space; see also Section~\ref{sec: elliptics}):

Let $G=F_2\ast \left(\mathbb{Z}/2\mathbb{Z}\right)$ and consider the graph of groups decompositions $\Gamma$ and $\Gamma'$ of $G$ as in Figure \ref{fig: pseudometric}, where all edge group inclusions are the obvious ones and all edges have constant length $\frac{1}{3}$.
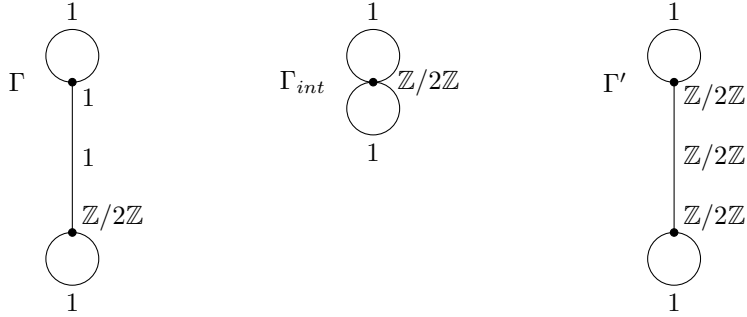
\begin{figure}[h!]
$$\begin{tikzpicture}
\draw [-] (1,.35) circle (10pt);
\draw [-] (5,.35) circle (10pt);
\draw [-] (9,.35) circle (10pt);
\draw [-] (1,-2.35) circle (10pt);
\draw [-] (5,-.35) circle (10pt);
\draw [-] (9,-2.35) circle (10pt);

\node [left] at (.5,0) {$\Gamma$};
\node [left] at (4.5,0) {$\Gamma_{int}$};
\node [left] at (8.5,0) {$\Gamma'$};

\node [above] at (1,.7) {1};
\node [right] at (1,-.2) {1};
\node [right] at (1,-1) {1};
\node [right] at (1,-1.8) {$\mathbb{Z}/2\mathbb{Z}$};
\node [below] at (1,-2.7) {1};

\node [above] at (5,.7) {1};
\node [right] at (5.2,0) {$\mathbb{Z}/2\mathbb{Z}$};
\node [below] at (5,-.7) {1};

\node [above] at (9,.7) {1};
\node [right] at (9,-.2) {$\mathbb{Z}/2\mathbb{Z}$};
\node [right] at (9,-1) {$\mathbb{Z}/2\mathbb{Z}$};
\node [right] at (9,-1.8) {$\mathbb{Z}/2\mathbb{Z}$};
\node [below] at (9,-2.7) {1};

\draw [-] (1,0) -- (1,-2);
\draw [-] (9,0) -- (9,-2);

\draw [fill] (1,0) circle [radius=.05];
\draw [fill] (1,-2) circle [radius=.05];
\draw [fill] (5,0) circle [radius=.05];
\draw [fill] (9,-2) circle [radius=.05];
\draw [fill] (9,0) circle [radius=.05];
\end{tikzpicture}$$
\caption{The Bass-Serre trees of the above graphs of groups lie in the same deformation space of $G$-trees. They are irreducible and locally finite. Since the group $G$ acts on them cocompactly and with finite point stabilizers, it is virtually free.}
\label{fig: pseudometric}
\end{figure}
The corresponding Bass-Serre trees $T$ and $T'$ lie in the same deformation space, as they are related by an elementary collapse followed by an elementary expansion (the intermediate graph of groups is given by $\Gamma_{int}$). The vertices of $T$ have valence 3 and 6, whereas the vertices of $T'$ all have valence 5. Hence, $T$ and $T'$ are not homeomorphic and in particular not $G$-equivariantly isometric. Still, the natural morphism of graphs of groups (in the sense of \cite{Ba93}) from $\Gamma$ to $\Gamma'$ lifts to a $G$-equivariant map from $T$ to $T'$ that is an isometry on edges and thus has Lipschitz constant 1, whence $d_{Lip}(T,T')=0$.
\end{ex}

\subsubsection*{The symmetrized Lipschitz metric}

A standard way to overcome these issues is to consider the symmetrized Lipschitz metric $$d_{Lip}^{sym}\colon\mathcal{PD}\times\mathcal{PD}\to\mathbb{R},\ (T,T')\mapsto d_{Lip}(T,T')+d_{Lip}(T',T)$$
which turns out to be an actual metric on projectivized deformation spaces of irreducible $G$-trees (in Section~\ref{sec: convergent sequences} we discuss its convergent sequences):

\begin{prop}
\label{prop: symmetrized metric}
If the projectivized deformation space $\mathcal{PD}$ consists of irreducible $G$-trees then for all $T,T'\in\mathcal{PD}$ we have $d_{Lip}^{sym}(T,T')=0$ if and only if $T$ and $T'$ are $G$-equivariantly isometric.
\end{prop}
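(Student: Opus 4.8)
\emph{Proof proposal.} The plan is to reduce the statement to translation length functions. The ``if'' direction is immediate from part~(2) of Proposition/Definition~\ref{prop: Lipschitz metric}, so only the converse needs an argument.

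The key ingredient I would isolate first is the elementary observation that a $G$-equivariant Lipschitz map $f\colon S\to S'$ between minimal $G$-trees cannot shrink translation lengths by more than its Lipschitz constant: for any $g\in G$, choosing a point $x$ in the characteristic set $C_S(g)$ one has $d_{S'}(f(x),gf(x))=d_{S'}(f(x),f(gx))\le\sigma(f)\,d_S(x,gx)=\sigma(f)\,l_S(g)$, and since $l_{S'}(g)\le d_{S'}(f(x),gf(x))$ this gives $l_{S'}(g)\le\sigma(f)\,l_S(g)$. (When $g$ is elliptic in $S$ this is vacuous: $f(x)$ is then a fixed point of $g$ in $S'$, so $l_{S'}(g)=0$, in agreement with the fact that $S$ and $S'$ lie in the same deformation space.)

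Now suppose $d_{Lip}^{sym}(T,T')=0$. By part~(1) of Proposition/Definition~\ref{prop: Lipschitz metric} both $d_{Lip}(T,T')$ and $d_{Lip}(T',T)$ are nonnegative, hence both vanish, i.e.\ $\sigma(T,T')=\sigma(T',T)=1$. By definition of the infimum, for each $\varepsilon>0$ there is a $G$-equivariant Lipschitz map $f\colon T\to T'$ with $\sigma(f)<1+\varepsilon$, so the inequality above yields $l_{T'}(g)\le(1+\varepsilon)\,l_T(g)$ for every $g\in G$; letting $\varepsilon\to 0$ gives $l_{T'}(g)\le l_T(g)$ for all $g$. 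Interchanging the roles of $T$ and $T'$ gives the reverse inequality, so $l_T=l_{T'}$ as points of $\mathbb{R}^{\mathcal{C}(G)}$. Since $\mathcal{PD}$ consists of irreducible $G$-trees, \cite[Theorem~3.7]{CM87}, quoted above, then shows that $T$ and $T'$ are $G$-equivariantly isometric.

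I do not anticipate a real obstacle here. The two points that merit attention are, first, that irreducibility is used essentially --- it is precisely what makes the translation length function a complete invariant, and the statement genuinely fails for genuine abelian deformation spaces, where all $G$-trees share the same projectivized length function --- and, second, that the argument never requires the infimum defining $\sigma(T,T')$ to be attained; the $\varepsilon$-approximation is enough, so the proof stays independent of the later existence results for minimal stretch maps.
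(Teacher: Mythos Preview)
Your proof is correct and follows essentially the same approach as the paper: both argue that $\sigma(T,T')=\sigma(T',T)=1$ forces $l_{T'}(g)\le(1+\varepsilon)l_T(g)$ via an $\varepsilon$-approximate Lipschitz map and a point on the characteristic set, then pass to the limit, symmetrize, and invoke \cite[Theorem~3.7]{CM87}. Your extra remarks on the essential use of irreducibility and the avoidance of minimal stretch maps are accurate and helpful.
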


\begin{proof}
By Proposition~\ref{prop: Lipschitz metric}(\ref{it: distance zero}) it suffices to show the ``only if'' direction. Suppose that we have $d_{Lip}^{sym}(T,T')=0$, equivalently $d_{Lip}(T,T')=0$ and $d_{Lip}(T',T)=0$. Then for all $\varepsilon>0$ there exist $G$-equivariant $(1+\varepsilon)$-Lipschitz maps $f\colon T\to T'$ and $f'\colon T'\to T$. Let $g\in G$ be a hyperbolic group element in $T$ and $p\in A_g\subset T$ a point in its hyperbolic axis. We have
\begin{align*}
l_{T'}(g)\leq d(f(p),gf(p)) &= d(f(p),f(gp))\\
&\leq \sigma(f)\cdot d(p,gp)=\sigma(f)\cdot l_T(g)\leq (1+\varepsilon)\cdot l_T(g)
\end{align*}
and, analogously, $l_T(g)\leq (1+\varepsilon)\cdot l_{T'}(g)$. As $\varepsilon$ was arbitrary, we conclude that $l_T=l_{T'}$ and hence, by \cite[Theorem~3.7]{CM87}, that the irreducible $G$-trees $T$ and $T'$ are $G$-equivariantly isometric.
\end{proof}

\begin{rem}
Thus, for $T$ and $T'$ as in Example~\ref{ex: not an asymmetric metric} we have $d_{Lip}(T',T)>0$, since $d_{Lip}(T,T')=0$ but $T$ and $T'$ are not $G$-equivariantly isometric.
\end{rem}

Nevertheless, the arguments in Section \ref{sec: classification} are specific for the asymmetric pseudometric $d_{Lip}$. Besides, in contrast to $d_{Lip}$, the symmetrization $d_{Lip}^{sym}$ fails to be geodesic, as was shown in \cite[Section~6]{FM11} in the special case of Outer space (see Section \ref{sec: geodesics} for the existence of $d_{Lip}$-geodesics).

\subsection{Minimal stretch maps}

\begin{thm}[Existence of minimal stretch maps]
\label{thm: irreducible infimum realized}
Let $\mathcal{D}$ be a deformation space of irreducible $G$-trees. For all $T,T'\in\mathcal{D}$ there exists a $G$-equivariant Lipschitz map $f\colon T\to T'$ such that $\sigma(f)=\sigma(T,T')$.
\end{thm}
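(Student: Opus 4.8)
The plan is to realize the infimum by a compactness argument applied to a well-chosen family of $G$-equivariant Lipschitz maps $T\to T'$ that are linear on edges. First I would normalize: by the discussion preceding Definition~\ref{prop: Lipschitz metric} we may restrict attention to maps $f\colon T\to T'$ that are linear on edges, so that $f$ is determined by where it sends the (finitely many $G$-orbits of) vertices of $T$ together with the slopes on edges, and $\sigma(f)$ equals the maximal edge-slope. Pick a sequence $f_n$ with $\sigma(f_n)\to\sigma(T,T')$, and normalize each $f_n$ further by postcomposing with a translation (equivalently, moving the vertex images by the $G$-action on $T'$) so that the image of a fixed vertex $v_0\in T$ lies in a fixed compact fundamental domain $K'\subset T'$ for the $G$-action. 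Since $T'$ is minimal and irreducible its quotient $G\backslash T'$ is a finite graph, so such a $K'$ exists.

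The heart of the argument is a diagonal/Arzelà--Ascoli extraction. Because $\sigma(f_n)$ is bounded (say by $\sigma(T,T')+1$ for large $n$) and $f_n(v_0)$ stays in the compact set $K'$, the images $f_n(v)$ of any fixed vertex $v$ stay in a bounded region of $T'$: indeed $d_{T'}(f_n(v),f_n(v_0))\le \sigma(f_n)\cdot d_T(v,v_0)$. A bounded region of the simplicial tree $T'$ — more precisely, a ball of fixed radius — meets only finitely many edges when $T'$ is locally finite; in the possibly locally infinite case one invokes the ultralimit argument of Horbez \cite{Ho14} referenced in the introductory remark, or one works with the closed ball, which is compact in the metric topology. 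Either way, passing to a subsequence we may assume $f_n(v)$ converges in $T'$ for every vertex $v$ in a fixed set of orbit representatives, and simultaneously that the slope of $f_n$ on each orbit of edges converges (slopes lie in the compact interval $[0,\sigma(T,T')+1]$). Extend $G$-equivariantly. Define $f$ on each vertex as the limit of $f_n$ and linearly on each edge with the limiting slope; one must check that $f$ is well-defined and continuous — i.e. that the limiting vertex images are compatible with the limiting slopes and edge-lengths, which holds because these compatibility relations ($d_{T'}(f_n(\iota e),f_n(\tau e))\le (\text{slope})\cdot\ell(e)$, with equality when the edge is not folded) are closed conditions. The map $f$ is $G$-equivariant by construction and Lipschitz with $\sigma(f)\le\limsup_n\sigma(f_n)=\sigma(T,T')$; since $\sigma(f)\ge\sigma(T,T')$ by definition of the infimum, equality holds.

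The main obstacle is the lack of local finiteness of $T'$, which breaks the naive Arzelà--Ascoli step: closed balls in a locally infinite simplicial tree with the metric topology need not be compact (the metric and simplicial topologies diverge, as noted in Section~\ref{sec: trees}). The clean way around this is precisely the one flagged in the introductory remark: take an ultralimit of the sequence $(T', f_n(v_0))$ of pointed $G$-trees along a nonprincipal ultrafilter, observe that the translation-length data forces this ultralimit to be $G$-equivariantly isometric to $T'$ itself (using \cite[Theorem~3.7]{CM87} and irreducibility, since the $f_n$ are uniformly Lipschitz and surjective, so translation lengths are pinched: $l_T(g)/\sigma(f_n)\le l_{T'}(g)$), and read off the limiting map $f$ from the ultralimit of the $f_n$. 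A secondary technical point is ensuring the extracted $f$ is still a genuine map into $T'$ (not merely into an abstract ultralimit), but the identification of the ultralimit with $T'$ resolves this. The rest — equivariance, the edge-compatibility check, and the final inequality $\sigma(f)\le\sigma(T,T')$ — is routine once the limit object is in hand.
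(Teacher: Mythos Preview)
Your proposal has a genuine gap at the normalization step. Postcomposing a $G$-equivariant map $f_n\colon T\to T'$ with the action of some $g_n\in G$ on $T'$ does \emph{not} preserve $G$-equivariance: the map $g_n\cdot f_n$ satisfies $(g_n\cdot f_n)(hx)=(g_nhg_n^{-1})(g_n\cdot f_n)(x)$, so it is equivariant only for the conjugated action. You cannot simply ``move $f_n(v_0)$ into a fundamental domain'' and still have a $G$-equivariant map into the original $T'$. Without this normalization, you have no control over the sequence $f_n(v_0)$, the ultralimit $(T',f_n(v_0))_\omega$ need not carry a $G$-action, and the extraction collapses. This is precisely where irreducibility must enter, and your argument does not use it.

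The paper's proof replaces your normalization with a direct boundedness argument that exploits irreducibility: one chooses the basepoint $p\in T$ to lie on the intersection $A_g\cap A_h$ of the axes of two hyperbolic elements $g,h$ generating a rank-2 free subgroup (arranging that the axes meet, in a compact segment). Then for each $n$, equivariance and the Lipschitz bound force $f_n(p)$ to lie within bounded distance of both axes $A'_g$ and $A'_h$ in $T'$; since these axes meet in (or are joined by) a compact segment, $f_n(p)$ is trapped in a fixed bounded set. This is the essential geometric input, and it is what allows the ultralimits $T_\omega$ and $T'_\omega$ to carry isometric $G$-actions.

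A second, smaller issue: the ultralimit $T'_\omega$ is not $G$-equivariantly isometric to $T'$ in general---when $T'$ is not locally finite, $T'_\omega$ can be strictly larger (it is a complete $\mathbb{R}$-tree, not necessarily simplicial or minimal). Your appeal to \cite[Theorem~3.7]{CM87} would at best identify the \emph{minimal} $G$-invariant subtree of $T'_\omega$ with $T'$. The paper handles this cleanly by observing that $T'$ embeds as a closed subtree of $T'_\omega$ and then composing the ultralimit map $f_\omega\colon T_\omega\to T'_\omega$ with the $1$-Lipschitz $G$-equivariant nearest-point projection $T'_\omega\to T'$.
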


The proof of Theorem~\ref{thm: irreducible infimum realized} will involve an argument of Horbez \cite{Ho14} that uses nonprincipal ultrafilters and ultralimits of metric spaces, which are defined as follows:

\begin{de}
A \emph{nonprincipal ultrafilter} $\omega$ on an infinite set $I$ is a finitely additive probability measure with values in $\left\{0,1\right\}$ such that all subsets $S\subseteq I$ are $\omega$-measurable and $\omega(S)=0$ if $S$ is finite.
\end{de}

Existence of nonprincipal ultrafilters follows from the axiom of choice. Given a nonprincipal ultrafilter $\omega$ on the set of natural numbers $\mathbb{N}$, for every bounded sequence $(c_n)_{n\in\mathbb{N}}\subset\mathbb{R}$ there exists a unique point $\lim_\omega c_n\in\mathbb{R}$ such that for all open neighborhoods $U$ of $\lim_\omega c_n$ we have $\omega(\left\{n\in\mathbb{N}\ |\ c_n\in U\right\})=1$ (see, for instance, \cite[9.1]{Ka}). In particular, if the sequence $(c_n)_{n\in\mathbb{N}}$ converges then $\lim_\omega c_n=\lim_{n\to\infty}c_n$.

\begin{de}
Let $\omega$ be a nonprincipal ultrafilter on $\mathbb{N}$. For a sequence of metric spaces $(X_n,d_n)_{n\in\mathbb{N}}$ with basepoints $(p_n)_{n\in\mathbb{N}}$ let $X_\infty$ be the set of all sequences $(x_n)_{n\in\mathbb{N}}\in\prod_{n\in\mathbb{N}}{X_n}$ for which the sequence $(d_n(x_n,p_n))_{n\in\mathbb{N}}\subset\mathbb{R}$ is bounded. Let $\sim$ be the equivalence relation on $X_\infty$ defined by $$(x_n)_{n\in\mathbb{N}}\sim (y_n)_{n\in\mathbb{N}}\quad\text{if}\quad\lim_\omega d_n(x_n,y_n)=0.$$ Define the \emph{$\omega$-ultralimit} $X_\omega$ of $(X_n,d_n,p_n)_{n\in\mathbb{N}}$ as the quotient $X_\infty/\sim$ endowed with the metric $d_\omega((x_n)_{n\in\mathbb{N}},(y_n)_{n\in\mathbb{N}})=\lim_\omega d_n(x_n,y_n)$.
\end{de}

If each $(X_n,d_n),\ n\in\mathbb{N}$ is a complete $\mathbb{R}$-tree then $(X_\omega,d_\omega)$ is again a complete $\mathbb{R}$-tree \cite[Lemma~4.6]{St09}. Moreover, if a group $G$ acts on each $(X_n,d_n)$ by isometries and for all $g\in G$ the sequence $(d_n(gp_n,p_n))_{n\in\mathbb{N}}\subset\mathbb{R}$ is bounded then $(X_\omega,d_\omega)$ carries a natural isometric $G$-action: For $g\in G$ and $(x_n)_{n\in\mathbb{N}}\in X_\omega$ we define $g(x_n)_{n\in\mathbb{N}}=(gx_n)_{n\in\mathbb{N}}$. Since for all $g\in G$ and $n\in\mathbb{N}$ we have
\begin{align*}
d_n(gx_n,p_n) &\leq d_n(gx_n,gp_n)+d_n(gp_n,p_n)\\
&=d_n(x_n,p_n)+d_n(gp_n,p_n)
\end{align*}
and the sequences of real numbers $(d_n(x_n,p_n))_{n\in\mathbb{N}}$ and $(d_n(gp_n,p_n))_{n\in\mathbb{N}}$ are bounded, so is the sequence $(d_n(gx_n,p_n))_{n\in\mathbb{N}}$. For all $g\in G$ we have
\begin{align*}
d_\omega(g(x_n)_{n\in\mathbb{N}},g(y_n)_{n\in\mathbb{N}}) &= \lim_\omega d_n(gx_n,gy_n)\\
&=\lim_\omega d_n(x_n,y_n)=d_\omega((x_n)_{n\in\mathbb{N}},(y_n)_{n\in\mathbb{N}})
\end{align*}
and the action of $G$ on $(X_\omega,d_\omega)$ is by isometries.

\begin{proof}[Proof of Theorem~\ref{thm: irreducible infimum realized}]
Let $T,T'\in\mathcal{D}$ and $C=\sigma(T,T')$. We wish to construct a $G$-equivariant $C$-Lipschitz map $f\colon T\to T'$.

Let $\omega$ be a nonprincipal ultrafilter on $\mathbb{N}$ and $(f_n\colon T\to T')_{n\in\mathbb{N}}$ a sequence of $G$-equivariant $C_n$-Lipschitz maps with $C_n\leq 2C$ and $\lim_{n\to\infty}C_n=C$. We will first choose a distinguished basepoint $p\in T$ and show that for all $n\in\mathbb{N}$ the image $f_n(p)$ lies in a bounded subset of $T'$ that does not depend on $n$. This then implies that for all $g\in G$ the sequence $d'(gf_n(p),f_n(p))_{n\in\mathbb{N}}\subset\mathbb{R}$ is bounded and that the $\omega$-ultralimit $T'_\omega=(T',d',f_n(p))_{\omega}$ carries a natural isometric $G$-action. (Evidently, $T_\omega=(T,d,p)_{\omega}$ carries a natural isometric $G$-action as well.) Indeed, as the action of $G$ on $T$ is irreducible, $G$ contains a free subgroup of rank 2 acting freely. Suppose that this free subgroup is generated by $g,h\in G$. Since $T$ and $T'$ have the same elliptic subgroups, the free subgroup $\langle g,h\rangle\leq G$ also acts freely on $T'$. If the hyperbolic axes $A_g$ and $A_h$ in $T$ intersect, they must intersect in a compact segment, as we could otherwise find integers $k,l\in\mathbb{Z}\setminus\left\{0\right\}$ such that $g^kh^{-l}$ fixes a point in $A_g\cap A_h$. For the following arguments we will assume that they intersect; if they are disjoint, we replace the basis of the free subgroup with $\left\{g,hg\right\}$, whose associated axes then intersect by Proposition~\ref{prop: translation formulas}(\ref{it: hyperbolics}). Let $p\in A_g\cap A_h$ be a point that lies in both axes and denote the hyperbolic axes of $g$ and $h$ in $T'$ by $A'_g$ and $A'_h$ respectively. By Proposition~\ref{prop: translation formulas}(\ref{it: distance formula}), and since $f_n$ is $G$-equivariant and $C_n$-Lipschitz with $C_n\leq 2C$, for all $n\in\mathbb{N}$ we have
\begin{align*}2C\cdot l_T(g)=2C\cdot d(gp,p) &\geq d'(f_n(gp),f_n(p))\\
&= d'(gf_n(p),f_n(p))=l_{T'}(g)+2d'(f_n(p),A'_g)
\end{align*}
and hence $d'(f_n(p),A'_g)\leq\frac{1}{2}(2C\cdot l_T(g)-l_{T'}(g))\leq C\cdot l_T(g)$. Thus, $f_n(p)$ lies within a $(C,l_T(g))$-bounded\footnote{Bounded in terms of $C$ and $l_T(g)$.} distance from $A'_g$ and, analogously, within a $(C,l_T(h))$-bounded distance from $A'_h$. We conclude that $f_n(p)$ lies within a $(C,l_T(g),l_T(h))$-bounded distance from the compact segment $A'_g\cap A'_h$ if the two axes intersect and from the unique compact connecting segment between them if they are disjoint. In particular, $f_n(p)$ lies in a bounded subset of $T'$ that does not depend on $n$. As remarked above, this implies that the ultralimits $T_\omega=(T,d,p)_{\omega}$ and $T'_\omega=(T',d',f_n(p))_{\omega}$ carry natural isometric $G$-actions.

The $G$-trees $T$ and $T'$ naturally embed $G$-equivariantly and isometrically into $T_\omega$ and $T'_\omega$ respectively: Since for all $n\in\mathbb{N}$ the point $f_n(p)\in T'$ lies in a bounded subset that does not depend on $n$, for all $x\in T'$ the sequence $(d'(x,f_n(p)))_{n\in\mathbb{N}}$ is bounded and hence the constant sequence $(x)_{n\in\mathbb{N}}$ defines a point in $T'_\omega$. One easily verifies that the natural inclusion $T'\hookrightarrow T'_\omega,\ x\mapsto (x)_{n\in\mathbb{N}}$ is indeed $G$-equivariant and isometric. We analogously obtain a $G$-equivariant isometric embedding $T\hookrightarrow T_\omega$.

Observe next that if $(d(x_n,p))_{n\in\mathbb{N}}$ is bounded then $(d'(f_n(x_n),f_n(p)))_{n\in\mathbb{N}}$ is bounded as well, since for all $n\in\mathbb{N}$ we have $d'(f_n(x_n),f_n(p))\leq 2C\cdot d(x_n,p)$. Thus, the maps $(f_n)_{n\in\mathbb{N}}$ induce a natural map $$f_\omega\colon T_\omega\to T'_\omega,\ (x_n)_{n\in\mathbb{N}}\mapsto (f_n(x_n))_{n\in\mathbb{N}}.$$
The map $f_\omega$ is easily seen to be $G$-equivariant, since for all $g\in G$ we have
\begin{align*}
f_\omega(g(x_n)_{n\in\mathbb{N}})=f_\omega((gx_n)_{n\in\mathbb{N}}) &= (f_n(gx_n))_{n\in\mathbb{N}}\\
&= (gf_n(x_n))_{n\in\mathbb{N}}=g(f_n(x_n))_{n\in\mathbb{N}}=gf_\omega((x_n)_{n\in\mathbb{N}}).
\end{align*}
Moreover, $f_\omega$ is $C$-Lipschitz, since for all $(x_n)_{n\in\mathbb{N}}, (y_n)_{n\in\mathbb{N}}\in T_\omega$ we have
\begin{align*}
d'_\omega(f_\omega((x_n)_{n\in\mathbb{N}}),f_\omega((y_n)_{n\in\mathbb{N}})) &= \lim_{\omega} d'(f_n(x_n),f_n(y_n))\\
&\leq \lim_{\omega} \left(C_n\cdot d(x_n,y_n)\right)\\
&= \lim_\omega C_n \cdot \lim_\omega d(x_n,y_n)
= C\cdot d_\omega((x_n)_{n\in\mathbb{N}},(y_n)_{n\in\mathbb{N}}).
\end{align*}

Finally, $T'_\omega$ is a complete $\mathbb{R}$-tree, being the $\omega$-ultralimit of complete $\mathbb{R}$-trees (namely, metric simplicial trees). In particular, the metric simplicial tree $T'$ embeds into $T'_\omega$ as a closed subspace, as complete subspaces of complete metric spaces are closed. By the nature of $\mathbb{R}$-trees, there exists a continuous nearest point projection of $T'_\omega$ onto the closed $G$-invariant subtree $T'$, which is easily seen to be $G$-equivariant and 1-Lipschitz. We define $f\colon T\to T'$ as the composition of the $G$-equivariant isometric embedding $T\hookrightarrow T_\omega$ with $f_\omega\colon T_\omega\to T'_\omega$ and the nearest point projection $T'_\omega\to T'$, and we obtain a $G$-equivariant $C$-Lipschitz map from $T$ to $T'$.
\end{proof}

\subsubsection*{Train tracks and optimal maps}

In fact, we will be interested in particularly nice $G$-equivariant Lipschitz maps realizing $\sigma(T,T')$, so-called \emph{optimal maps}. In order to define and construct optimal maps, we involve the concept of \emph{train tracks}:

\begin{de}
\label{de: train tracks}
Let $\mathcal{D}$ be a deformation space of $G$-trees and $T\in\mathcal{D}$. A \emph{direction} at a point $x\in T$ is a germ of isometric embeddings $\gamma\colon[0,\varepsilon)\to T,\ \varepsilon>0$ with $\gamma(0)=x$. Given $g\in G$ with $gx\neq x$, we will denote the unique direction at $x$ pointing towards $gx$ by $\delta_{x,gx}$. Denote the set of directions at $x$ by $D_x T$. A \emph{train track structure} on $T$ is a collection of equivalence relations, one on $D_v T$ for each vertex $v\in V(T)$, such that two directions $\delta_1,\delta_2\in D_v T$ are equivalent (denoted $\delta_1\sim\delta_2$) if and only if for all $g\in G$ the directions $g\delta_1, g\delta_2\in D_{gv}T$ are equivalent as well. Equivalence classes of directions at a vertex $v\in V(T)$ are called \emph{gates} at $v$. A \emph{turn} at a vertex $v\in V(T)$ is a pair of directions at $v$. Given a train track structure on $T$, we say that a turn at a vertex is \emph{illegal} if the two directions are equivalent, i.e., if they represent the same gate, and \emph{legal} if not. Whenever a nondegenerate immersed path $\gamma$ in $T$ passes through a vertex $v$ of $T$, we may locally reparametrize $\gamma$ to an isometric embedding so that the incoming direction (with opposite orientation) and the outgoing direction of $\gamma$ at $v$ define a turn at $v$. A nondegenerate immersed path in $T$ is \emph{legal} if it only makes legal turns and \emph{illegal} otherwise.
\end{de}

\begin{de}
\label{de: tension forest}
Let $\mathcal{D}$ be a deformation space of $G$-trees and $T,T'\in\mathcal{D}$. Let $f\colon T\to T'$ be a $G$-equivariant map that is linear on edges. We denote the union of all (closed) edges of $T$ on which $f$ attains its maximal slope by $\Delta(f)\subset T$ and we call it the \emph{tension forest} of $f$. The tension forest $\Delta(f)\subset T$ is a $G$-invariant subforest.
\end{de}

Every $G$-equivariant map $f\colon T\to T'$ that is linear on edges defines a natural train track structure on its tension forest $\Delta=\Delta(f)\subset T$ as follows: For each vertex $v\in V(\Delta)$ we have a map $D_vf\colon D_v\Delta\to D_{f(v)}{T'}$ that maps the direction of $\gamma\colon [0,\varepsilon)\to\Delta$ with $\gamma(0)=v$ to the direction of the unique isometric embedding in the reparametrization class of $f\circ\gamma$ (since $f$ does not collapse any edges in its tension forest, it has nonzero slope on the image of $\gamma$). We define two directions $\delta_1,\delta_2\in D_v\Delta$ to be equivalent if $D_vf(\delta_1)=D_vf(\delta_2)$. By the $G$-equivariance of $f$, this collection of equivalence relations is indeed a train track structure on $\Delta$.

The tension forest $\Delta(f)$ endowed with the train track structure defined by $f$ might have vertices of valence 1 and, more generally, there might be vertices with only one gate.

\begin{de}
A $G$-equivariant Lipschitz map $f\colon T\to T'$ that realizes $\sigma(T,T')$ and is linear on edges is an \emph{optimal map} if its tension forest $\Delta(f)$ has at least 2 gates at every vertex.
\end{de}

Optimality of $f$ implies that any legal path in $\Delta(f)$ may be extended in both directions to a longer legal path and, inductively, that there exists a legal line in $\Delta(f)$. This will be made use of in the proof of Theorem~\ref{thm: inf = sup}.

\begin{prop}\label{prop: existence of optimal maps}
Let $\mathcal{D}$ be a deformation space of $G$-trees and $T,T'\in\mathcal{D}$. Every $G$-equivariant Lipschitz map $f\colon T\to T'$ that realizes $\sigma(T,T')$ and is linear on edges is $G$-equivariantly homotopic to an optimal map $f'\colon T\to T'$ with $\Delta(f')\subseteq\Delta(f)$. If $f$ is not optimal to begin with then we have $\Delta(f')\neq\Delta(f)$.
\end{prop}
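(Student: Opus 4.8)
The plan is to start with a $G$-equivariant Lipschitz map $f\colon T\to T'$ that realizes $\sigma(T,T')$ and is linear on edges, and to fix this map up in stages so that its tension forest develops at least two gates at every vertex, using $G$-equivariant homotopies that never enlarge the tension forest. The central mechanism is a local \emph{unfolding} (or ``sliding'') move at a bad vertex. Suppose $v\in V(\Delta(f))$ has only one gate, i.e. all directions in $D_v\Delta(f)$ are mapped by $D_vf$ to a single direction at $f(v)$. Then near $v$ the map $f$ looks like an isometric embedding of a small star onto a single germ, and one can homotope $f$ rel the rest of the tree by ``pushing'' $f(v)$ a little bit along that common direction, folding the star open: concretely, choose a small $\varepsilon>0$, let $w$ be the point at distance $\varepsilon$ from $f(v)$ in the common direction, and redefine $f$ on the $\varepsilon$-neighborhood of $v$ in $\Delta(f)$ so that it first runs the segment $[f(v),w]$ and then continues as before. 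This strictly decreases the slope of $f$ on the edges incident to $v$ in $\Delta(f)$ (they now cover a shorter residual segment), so it removes those edges from the tension forest, while the edges not incident to $v$, or the edges outside $\Delta(f)$, keep or do not increase their slopes.

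The key bookkeeping point is that this move must be performed $G$-equivariantly and simultaneously on the entire $G$-orbit of $v$, which is legitimate because the homotopy is supported on a $G$-invariant neighborhood of $G\cdot v$ and the train track structure is $G$-invariant by construction, so either every vertex in the orbit has one gate or none does. After one orbit-move, the new map $f_1$ is still $G$-equivariant, still linear on edges (reparametrize to restore linearity, which only decreases slopes), still realizes $\sigma(T,T')$ provided we never let any slope exceed $\sigma(T,T')$ — and this is where a little care is needed: the unfolding move genuinely lowers the slope on the edges at $v$, it does not raise it elsewhere, so the maximal slope over all edges is unchanged as long as $\Delta(f)$ was not the single orbit of edges at $v$; and $\Delta(f_1)\subsetneq\Delta(f)$ as a $G$-invariant subforest because at least the orbit of one edge drops out and nothing is added. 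So the tension forest strictly shrinks, measured for instance by its number of $G$-orbits of edges, which is finite since $T$ is cocompact.

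The plan is then to iterate: as long as the current map is not optimal, i.e. has a vertex with a single gate in its tension forest, apply the orbit-unfolding move. Each step strictly decreases the finite quantity (number of $G$-orbits of edges in the tension forest), so the process terminates after finitely many steps at a map $f'$ whose tension forest has at least two gates at every vertex — i.e., an optimal map — with $\Delta(f')\subseteq\Delta(f)$ throughout, and with $\Delta(f')\subsetneq\Delta(f)$ if $f$ was not already optimal since then at least one step was performed. Composing the successive homotopies (all $G$-equivariant) gives the required $G$-equivariant homotopy from $f$ to $f'$.

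The main obstacle I anticipate is making the local unfolding move rigorous when $T'$ is not locally finite and when the offending vertex $v$ has high (possibly infinite) valence in $\Delta(f)$: one must check that ``one gate'' really means all the incident directions fold onto a single germ at $f(v)$, choose $\varepsilon$ uniformly small enough that the modification stays inside the existing edges of $T'$ and does not accidentally merge with images of other parts of the tree (here one uses that $f$ is linear on edges and the segment $[v', v]$ of $T$ maps isometrically for short lengths, plus properness-type facts about $\mathbb R$-trees), and verify that slopes on \emph{all} edges — not just those at $v$ — are controlled, in particular that edges incident to $v$ via a different orbit, or edges two steps away, are not pushed up. A secondary subtlety is confirming that the move can always be taken to keep the map into $T'$ (not a larger tree): since the common direction at $f(v)$ is the image of an actual edge direction in $T'$, the unfolding happens along an existing edge of $T'$, so this is fine. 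Once the single-orbit move is justified, the termination argument via the descending chain of $G$-invariant subforests (with finitely many edge-orbits) is routine.
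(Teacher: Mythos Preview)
Your approach is essentially the same as the paper's: move $f(v)$ a small amount in the common image direction, do this $G$-equivariantly, observe that the edges of $\Delta(f)$ at $v$ drop out, and iterate using cocompactness. Two small corrections are in order, though.

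First, your claim that ``the edges outside $\Delta(f)$ keep or do not increase their slopes'' is not right for those edges of $T\setminus\Delta(f)$ that are incident to $v$: moving $f(v)$ will in general \emph{increase} their slopes. The fix (which the paper makes explicit) is simply to take $\varepsilon$ small enough that these finitely many $G$-orbits of edges, whose slopes are strictly below $\sigma(f)$ to begin with, still have slope strictly below $\sigma(f)$ after the move; then they do not enter the new tension forest and $\Delta(f_1)\subsetneq\Delta(f)$ as you want. Your ``obstacles'' paragraph shows you sensed this, but the earlier sentence should be corrected.

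Second, your parenthetical worry that the move might lower the maximal slope (if $\Delta(f)$ happened to consist only of the edges at $v$) is handled not by a combinatorial case analysis but directly by the hypothesis $\sigma(f)=\sigma(T,T')$: since $f$ already realises the infimum over all $G$-equivariant Lipschitz maps, no $G$-equivariant perturbation can produce a map with strictly smaller Lipschitz constant, so after the move the maximal slope is still $\sigma(T,T')$ and the new tension forest is nonempty. This is exactly how the paper disposes of the issue.
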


Theorem~\ref{thm: irreducible infimum realized} and Proposition~\ref{prop: existence of optimal maps} imply that if $\mathcal{D}$ is an irreducible deformation space then for all $T,T'\in\mathcal{D}$ there exists an optimal map $f\colon T\to T'$.

\begin{proof}
Let $\Delta=\Delta(f)$. If a vertex $v\in V(\Delta)$ has only one gate $\delta\in D_v\Delta$, slightly move $f(v)$ in the direction of $D_vf(\delta)\in D_{f(v)}T'$ (see Figure~\ref{fig: move f(v)}).
\begin{figure}[h!]
$$\begin{tikzpicture}
\draw [-] (0,-1) -- (6,-2.5);
\draw [-] (1,-3) -- (6,-2.5);
\draw [-] (5,-1) -- (6,-2.5);
\draw[dashed] [-] [line width=0.075cm] (6,-2.5) -- (10,-4);
\draw[dashed] [-] [line width=0.075cm] (10,-3) -- (8,-3.25);
\draw[dashed] [-] [line width=0.075cm] (8,-4) -- (7,-2.87);
\draw [->] [line width=0.075cm] (6,-2.5) -- (6.66,-2.75);
\draw [-] (6,-2.5) -- (10,-4);
\draw [-] (10,-3) -- (8,-3.25);
\draw [-] (8,-4) -- (7,-2.87);
\node [below] at (5.8,-2.6) {$f(v)$};
\draw [fill] (0,-1) circle [radius=.075];
\draw [fill] (1,-3) circle [radius=.075];
\draw [fill] (5,-1) circle [radius=.075];
\draw [fill] (6,-2.5) circle [radius=.075];
\draw [fill] (8,-4) circle [radius=.075];
\draw [fill] (10,-3) circle [radius=.075];
\draw [fill] (10,-4) circle [radius=.075];
\end{tikzpicture}$$
\caption{The image of $\Delta$ under $f$ (dashed) and the direction in which we slightly move $f(v)$ (arrow).}
\label{fig: move f(v)}
\end{figure}
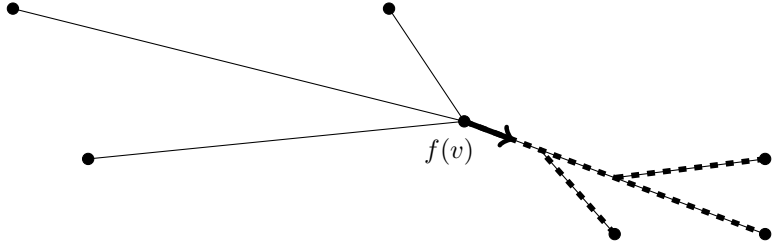
Perform this perturbation $G$-equivariantly and keep the homotopy fixed on all other $G$-orbits of vertices of $T$. This decreases the slope of $f$ on the $G$-orbits of all edges of $\Delta$ adjacent to $v$ and we obtain a $G$-equivariant Lipschitz map $f'\colon T\to T'$ with $\Delta(f')\subset \Delta$ but $\Delta(f')\neq\Delta$. Keeping the perturbation small enough ensures that the (finitely many) $G$-orbits of the edges of $T\setminus\Delta$ adjacent to $v$, on which the slope is increased, do not become part of the new tension forest. As $f$ is assumed to have minimal Lipschitz constant among all $G$-equivariant Lipschitz maps from $T$ to $T'$, we will not have removed all edges of $\Delta$ and started over with a new tension forest that corresponds to a strictly smaller maximal stretching factor. This process eventually terminates by the cocompactness of $T$.
\end{proof}

\subsection{Witnesses}

The results in this section will imply that the Lipschitz metric on projectivized deformation space of irreducible $G$-trees can be computed in terms of hyperbolic translation lengths. We begin with an easy observation:

\begin{lem}
\label{lem: pre-witness}
Let $\mathcal{D}$ be a deformation space of $G$-trees and $T,T'\in\mathcal{D}$. For any $G$-equivariant Lipschitz map $f\colon T\to T'$ and any hyperbolic group element $g\in G$ we have $\sigma(f)\geq\frac{l_{T'}(g)}{l_T(g)}.$ In particular, we have $\sigma(T,T')\geq\sup_g\frac{l_{T'}(g)}{l_T(g)}$, where $g$ ranges over all hyperbolic group elements of $G$.
\end{lem}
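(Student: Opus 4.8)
The plan is to reduce everything to translation lengths using the fundamental relation between Lipschitz maps and translation lengths. Since $f\colon T\to T'$ is $G$-equivariant and $\sigma(f)$-Lipschitz, we have for every $x\in T$ and every $g\in G$ that $d'(f(x),gf(x)) = d'(f(x),f(gx)) \leq \sigma(f)\cdot d(x,gx)$, where the first equality uses $G$-equivariance of $f$. This is the only real input needed.

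First I would fix a hyperbolic element $g\in G$ and a point $p$ on its hyperbolic axis $A_g\subset T$, so that $d(p,gp) = l_T(p)$ realizes the translation length (by Proposition~\ref{prop: translation formulas}(\ref{it: distance formula}), or simply by the fact that $p$ lies on the characteristic set). Then the displacement inequality above gives $d'(f(p),gf(p)) \leq \sigma(f)\cdot l_T(g)$. Since the left-hand side is a displacement of $g$ in $T'$, it is bounded below by the infimum of all displacements, namely $l_{T'}(g) \leq d'(f(p),gf(p))$. Chaining these yields $l_{T'}(g) \leq \sigma(f)\cdot l_T(g)$, and since $g$ is hyperbolic we have $l_T(g)>0$, so we may divide to obtain $\sigma(f) \geq l_{T'}(g)/l_T(g)$. (Note this is exactly the computation already performed inside the proof of Proposition~\ref{prop: symmetrized metric}, so I would phrase it tersely.)

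For the second assertion, taking the supremum over all hyperbolic $g\in G$ on the right-hand side gives $\sigma(f) \geq \sup_g l_{T'}(g)/l_T(g)$ for every $G$-equivariant Lipschitz map $f$, and then taking the infimum over all such $f$ on the left gives $\sigma(T,T') = \inf_f \sigma(f) \geq \sup_g l_{T'}(g)/l_T(g)$. One should check that the supremum on the right is finite and in fact at most $\sigma(T,T')$ — but that is precisely what the inequality just established shows, and finiteness of $\sigma(T,T')$ follows from the existence of a $G$-equivariant Lipschitz map between trees in the same deformation space, as discussed at the start of Section~\ref{sec: Lipschitz metric}.

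There is essentially no obstacle here: the statement is a one-line consequence of the definition of the Lipschitz constant together with the variational characterization of translation length. The only point requiring a modicum of care is to make sure $p$ is chosen on the axis so that $d(p,gp)$ genuinely equals $l_T(g)$ rather than merely bounding it from above — but since $g$ is hyperbolic its characteristic set $A_g$ is nonempty and $g$ translates along it by exactly $l_T(g)$, so such a $p$ exists. I would present the whole argument in two or three displayed inequalities.
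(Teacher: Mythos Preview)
Your proof is correct and follows exactly the same approach as the paper's: pick $p\in A_g$, chain $l_{T'}(g)\leq d'(gf(p),f(p))\leq \sigma(f)\cdot d(gp,p)=\sigma(f)\cdot l_T(g)$, and conclude. (Minor typo: you wrote $l_T(p)$ where you meant $l_T(g)$.)
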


\begin{proof} Let $p\in A_g$. We have $l_{T'}(g) \leq d(gf(p),f(p))\leq \sigma(f)\cdot d(gp,p) = \sigma(f)\cdot l_T(g)$, whence the claim.
\end{proof}

\begin{thm}[Existence of witnesses]
\label{thm: inf = sup}
Let $\mathcal{D}$ be a deformation space of irreducible $G$-trees. For all $T,T'\in\mathcal{D}$ there exists a hyperbolic group element $\xi\in G$ such that $$\sigma(T,T')=\frac{l_{T'}(\xi)}{l_T(\xi)}=\sup_g\frac{l_{T'}(g)}{l_T(g)}$$ where $g$ ranges over all hyperbolic group elements of $G$.  In fact, we can always arrange that some (and hence any) fundamental domain for the action of $\xi$ on its hyperbolic axis $A_\xi\subset T$ meets each $G$-orbit of vertices of $T$ at most 10 times.
\end{thm}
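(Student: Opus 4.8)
The plan is to combine the optimal maps produced by Theorem~\ref{thm: irreducible infimum realized} and Proposition~\ref{prop: existence of optimal maps} with a pigeonhole argument on legal lines in the tension forest. By Lemma~\ref{lem: pre-witness} we already have $\sigma(T,T')\geq\sup_g l_{T'}(g)/l_T(g)$, so the content is the reverse inequality, realized by a single hyperbolic element $\xi$ whose axis is "short" relative to the $G$-orbits of vertices. Fix an optimal map $f\colon T\to T'$ realizing $C:=\sigma(T,T')$, with tension forest $\Delta=\Delta(f)$ carrying the train track structure induced by $f$; by optimality every vertex of $\Delta$ has at least two gates, so $\Delta$ contains a legal line $L$. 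The key point about legality is that $f$ restricted to a legal path is an immersion with constant slope $C$ (no cancellation occurs at the vertices precisely because consecutive directions lie in distinct gates), so $f$ stretches lengths along $L$ exactly by the factor $C$.

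Next I would extract a periodic legal line from $L$ by a counting argument. Since $T$ has finitely many $G$-orbits of vertices, say $k$ of them, consider the sequence of vertices of $\Delta$ met by $L$ in order, each tagged by (its $G$-orbit, the gate at which $L$ enters, the gate at which $L$ leaves); there are only finitely many such tags, and in fact I would want to control the count so that within any $10k$ consecutive vertices along $L$ two must carry the same tag. The cleanest way: among any $N$ consecutive vertices of $L$ there is a repeated $G$-orbit; choosing $N$ appropriately (the constant $10$ will come from bounding the number of gate-pairs, hence the number of distinct tags per orbit, using that in an irreducible deformation space the relevant local complexity is bounded), one finds vertices $v$ and $gv=hv$ for some $g\in G$ with the two occurrences carrying identical (orbit, in-gate, out-gate) data. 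Then the group element $\xi:=g$ (suitably normalized) maps the legal subsegment of $L$ from $v$ to $gv$ onto the next period in a way compatible with the train track structure, so the $\xi$-translates of that segment concatenate into a legal $\xi$-invariant line, which must be the axis $A_\xi$, and a fundamental domain for $\xi$ on $A_\xi$ meets each $G$-orbit of vertices at most $10$ times by construction.

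Finally, since $A_\xi$ lies in $\Delta$ and is legal, $f$ restricted to $A_\xi$ is an immersion of constant slope $C$; taking a fundamental domain $[p,\xi p]\subset A_\xi$ we get $l_{T'}(\xi)=d_{T'}(f(p),\xi f(p))=d_{T'}(f(p),f(\xi p))=C\cdot d_T(p,\xi p)=C\cdot l_T(\xi)$, using that $f(A_\xi)$ contains the segment $[f(p),\xi f(p)]$ without backtracking so the translation length of $\xi$ in $T'$ is exactly realized. Hence $l_{T'}(\xi)/l_T(\xi)=C=\sigma(T,T')$, and together with Lemma~\ref{lem: pre-witness} this element $\xi$ is the desired witness. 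The main obstacle is the bookkeeping in the second step: one must argue that legality is genuinely preserved when passing to the periodic line (i.e. that the turn at the "seam" $v$ created by matching in-gate and out-gate data is legal), and pin down the numerical constant $10$ by correctly counting the possible (orbit, gate, gate) labels — this requires care about vertices with more than two gates and about the fact that directions, not just edges, are being tracked, but no deep new idea beyond the pigeonhole principle applied to the $G$-finite, locally train-track-structured forest $\Delta$.
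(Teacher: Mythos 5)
Your overall strategy---produce an optimal map $f$ via Theorem~\ref{thm: irreducible infimum realized} and Proposition~\ref{prop: existence of optimal maps}, find a legal line in $\Delta(f)$, extract a periodic legal piece by pigeonhole, and conclude with the equivalence of Lemma~\ref{lem: characterization of witnesses}---matches the paper's. But the central combinatorial step, where you must extract a \emph{hyperbolic} $\xi$ with a \emph{legal} axis and a fundamental domain meeting each $G$-orbit of vertices at most $10$ times, is exactly where your sketch has a genuine gap, and the route you propose does not close it. You pigeonhole on tags $(\text{orbit},\text{in-gate},\text{out-gate})$ and hope to bound the number of tags per orbit. This fails: the trees are not assumed locally finite, so a vertex may have infinitely many gates; and comparing gates at two vertices $v$ and $w=gv$ in the same orbit requires a choice of $g$ in the coset $gG_v$, so the finite invariant you would actually need is the set of $G_v$-orbits of \emph{pairs} of gates, which can be infinite even when there are finitely many $G_v$-orbits of single gates (e.g.\ an infinite $G_v$ acting on infinitely many gates with one orbit has infinitely many orbits of pairs). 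Even in the locally finite case this count would not yield the constant $10$. A second, independent gap: the element $g$ with $gv=w$ produced by pigeonhole on vertex orbits need not be hyperbolic, and you give no mechanism to repair this.

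The paper resolves both points by a different device. Pigeonhole is applied only to $G$-orbits of vertices, never to gates: among three orbit-equivalent vertices on the legal ray $R$ one obtains a hyperbolic element either directly or as a product of two elliptic elements with disjoint fixed sets, via Proposition~\ref{prop: translation formulas}(2) --- this is where the ``$3$'' in the count $3+2+3+2=10$ comes from. The resulting axis $A_g$ lies in $\Delta(f)$ with all interior turns legal because its fundamental domain sits inside $R$; so if $A_g$ is illegal, the illegality is concentrated at the single orbit of seam turns. One then moves further along $R$ to a second hyperbolic element $h$; if $A_h$ is also illegal, then $A_g\cap A_h=\emptyset$, the product $hg$ is hyperbolic by Proposition~\ref{prop: translation formulas}(3), its fundamental domain $[x_0,hx]$ is a union of four subsegments of $R$ and their translates (hence the bound $10$), and the two seam turns of $A_{hg}$ are legal \emph{precisely because} $A_g$ and $A_h$ are illegal: the illegal gate identifications at $x$ and $y$, combined with the legality of $R$, force the new turns into distinct gates. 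Your ``matching in-gate and out-gate data'' step would have to be replaced by this two-element composition trick; as written, the legality of the seam turn and the constant $10$ are both unestablished.
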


We will call a hyperbolic group element $\xi\in G$ (or, depending on the context, its hyperbolic axis $A_\xi\subset T$) satisfying $\sigma(T,T')=\frac{l_{T'}(\xi)}{l_T(\xi)}$ a \emph{witness} for the minimal stretching factor from $T$ to $T'$. A hyperbolic group element $g\in G$ such that some (and hence any) fundamental domain for the action of $g$ on its axis $A_g\subset T$ meets each $G$-orbit of vertices of $T$ at most 10 times will be called a \emph{candidate of $T$}. Theorem~\ref{thm: inf = sup} asserts that there always exists a witness which is a candidate (our notion of candidates is nonstandard, as remarked below). We will denote by $\mathrm{cand}(T)\subset G$ the set of candidates of $T$.

If we choose for each $g\in\mathrm{cand}(T)$ a fundamental domain for the action of $g$ on its axis $A_g\subset T$, these fundamental domains project to only finitely many different edge loops in the quotient graph $G\backslash T$. In particular, the set of translation lengths $\left\{l_T(g)\ |\ g\in \mathrm{cand}(T)\right\}\subset\mathbb{R}$ is finite. At the same time, for any $T'\in\mathcal{D}$ the set $\left\{l_{T'}(g)\ |\ g\in \mathrm{cand}(T)\right\}\subset\mathbb{R}$ is finite as well, since the image of $l_{T'}$ in $\mathbb{R}$ is discrete and we have $l_{T'}(g)\leq\sigma(T,T')\cdot l_T(g)$ for all $g\in G$. Clearly, if $T,T'\in\mathcal{D}$ are $G$-equivariantly homeomorphic then $\mathrm{cand}(T)=\mathrm{cand}(T')$.

\begin{rem}
With significantly more effort, one can further show that there always exists a witness whose hyperbolic axis projects to a loop in $G\backslash T$ with certain topological properties, as was done in \cite[Proposition~3.15]{FM11} for free $F_n$-trees and in \cite[Theorem~9.10]{FM13} in the special case of irreducible $G$-trees with trivial edge stabilizers. However, the weaker finiteness properties of candidates discussed above will suffice for all our applications.
\end{rem}

In the proof of Theorem~\ref{thm: inf = sup} we will use the following characterization of witnesses:

\begin{lem}
\label{lem: characterization of witnesses}
Let $\mathcal{D}$ be a deformation space of $G$-trees and $T,T'\in\mathcal{D}$. For an optimal map $f\colon T\to T'$ and a hyperbolic group element $g\in G$, the following are equivalent:
\begin{enumerate}
\item $\sigma(f)=\frac{l_{T'}(g)}{l_T(g)}$;
\item the hyperbolic axis $A_g\subseteq T$ is contained in the tension forest $\Delta(f)$ and it is legal with respect to the train track structure defined by $f$;
\item the hyperbolic axis $A_g\subseteq T$ is contained in the tension forest $\Delta(f)$ and $f(A_g)\subseteq T'$ equals $A_g'$, the hyperbolic axis of $g$ in $T'$.
\end{enumerate}
\end{lem}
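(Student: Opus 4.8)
\textbf{Proof plan for Lemma~\ref{lem: characterization of witnesses}.}
The plan is to prove the cycle $(3)\Rightarrow(1)\Rightarrow(2)\Rightarrow(3)$. The implication $(3)\Rightarrow(1)$ is the easy direction: if $f$ maps $A_g$ onto $A'_g$ and every edge of $A_g$ lies in $\Delta(f)$, then $f$ restricted to $A_g$ is an immersion with constant slope $\sigma(f)$ onto $A'_g$. Picking a fundamental domain $[p, gp] \subseteq A_g$ for the action of $g$, its image is a fundamental domain $[f(p), gf(p)] \subseteq A'_g$, so $l_{T'}(g) = d(f(p), gf(p)) = \sigma(f)\cdot d(p, gp) = \sigma(f)\cdot l_T(g)$; combined with Lemma~\ref{lem: pre-witness} (which gives the reverse inequality $\sigma(f) \geq l_{T'}(g)/l_T(g)$ for free), this is the equality in $(1)$.

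For $(1)\Rightarrow(2)$, I would argue contrapositively by tracking length along the axis. Fix a fundamental domain $J = [p, gp]$ for $g$ acting on $A_g$, of length $l_T(p) := l_T(g)$, and consider the concatenated path $\gamma = J \cup gJ \cup \dots \cup g^{k-1}J$ for large $k$, a fundamental domain for $g^k$ on $A_g$ of $T$-length $k\cdot l_T(g)$. The image $f(\gamma)$ is a path in $T'$ from $f(p)$ to $g^k f(p)$, so its $T'$-length is at least $d(f(p), g^k f(p)) = l_{T'}(g^k) + 2d(f(p), A'_g) = k\cdot l_{T'}(g) + 2d(f(p), A'_g)$ by Proposition~\ref{prop: translation formulas}(\ref{it: distance formula}). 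On the other hand, since $f$ is linear on edges with all slopes $\leq \sigma(f)$, the length of $f(\gamma)$ is at most $\sigma(f)\cdot(\text{length of }\gamma \text{ in the tension forest}) + (\text{slopes}<\sigma(f) \text{ on the complement})$; if some edge of $J$ is not in $\Delta(f)$, then a definite fraction (depending only on $J$) of $\gamma$ gets stretched by strictly less than $\sigma(f)$, giving length $f(\gamma) \leq k\cdot(\sigma(f)\cdot l_T(g) - c)$ for a constant $c > 0$ independent of $k$. Comparing the two estimates as $k\to\infty$ forces $l_{T'}(g) < \sigma(f)\cdot l_T(g)$. A similar accounting handles illegality: each time $\gamma$ traverses an illegal turn at a vertex $v$, the image $f(\gamma)$ backtracks, and after tightening, $f(\gamma)$ loses at least twice the length of the shorter of the two cancelling edge-images — again a definite loss recurring $\Theta(k)$ times, since $g$ acts cocompactly on $A_g$ so the illegal turn repeats periodically. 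Hence if $A_g \not\subseteq \Delta(f)$ or $A_g$ is illegal, then $\sigma(f) \neq l_{T'}(g)/l_T(g)$, proving $(1)\Rightarrow(2)$.

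Finally, $(2)\Rightarrow(3)$ is where I expect the real work. Assuming $A_g \subseteq \Delta(f)$ and $A_g$ legal, I want to show $f(A_g) = A'_g$. Because $A_g$ is legal and $f$ has nonzero slope on every edge of $A_g \subseteq \Delta(f)$, the restriction $f|_{A_g}$ is locally injective: at each vertex $v \in A_g$ the incoming and outgoing directions of $A_g$ form a legal turn, so $D_v f$ sends them to distinct directions and $f$ does not fold $A_g$ at $v$. A locally injective map from the line $A_g$ to the $\mathbb{R}$-tree $T'$ is a proper embedding of a line, hence its image is a bi-infinite geodesic line $L \subseteq T'$. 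This line $L$ is $g$-invariant: since $f$ is $G$-equivariant and $g A_g = A_g$, we have $gL = g f(A_g) = f(g A_g) = f(A_g) = L$, and $g$ acts on $L$ as a translation (it cannot fix a point of $L$ because $g$ is hyperbolic in $T'$). But the unique $g$-invariant line in $T'$ is the hyperbolic axis $A'_g$ (as recalled in Section~\ref{sec: trees}), so $L = A'_g$, i.e., $f(A_g) = A'_g$. The only subtlety to nail down carefully is the passage ``locally injective line $\to$ globally embedded line'' in a possibly non-locally-finite $\mathbb{R}$-tree: this follows because in an $\mathbb{R}$-tree any locally injective path is globally injective (there are no nontrivial loops, so any self-intersection would force a backtrack, contradicting local injectivity), and the image of a proper injective map on $[n, n+1]$-pieces glues to a proper embedding of $\mathbb{R}$. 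This completes the cycle and hence the lemma.
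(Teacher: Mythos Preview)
Your cycle $(3)\Rightarrow(1)\Rightarrow(2)\Rightarrow(3)$ follows the same ideas as the paper, but the ordering exposes a gap in your $(3)\Rightarrow(1)$. You assert that if $A_g\subseteq\Delta(f)$ and $f(A_g)=A'_g$ then $f|_{A_g}$ is an immersion; but ``$f|_{A_g}$ is an immersion'' is precisely the legality of $A_g$, i.e., condition $(2)$, not a consequence of $(3)$. Nothing in $(3)$ prevents $f|_{A_g}$ from folding at a vertex and then backtracking along $A'_g$: one still has $f(A_g)=A'_g$ as sets, yet the image of $[p,gp]$ is no longer the segment $[f(p),gf(p)]$ and your length computation breaks down. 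The paper sidesteps this by running the chain as $(2)\Rightarrow(3)\Rightarrow(1)$: it first uses legality to see that $f(A_g)$ is a $g$-invariant line (hence $A'_g$), and then, with legality still in hand, computes $d(f(gp),f(p))=\sigma(f)\cdot d(gp,p)$. In other words, the paper is really proving $(2)\Rightarrow(3)$ and $(2)\Rightarrow(1)$, not a standalone $(3)\Rightarrow(1)$. If you want to keep your cycle, you must supply an independent argument for $(3)\Rightarrow(2)$ or $(3)\Rightarrow(1)$; otherwise, reorganize as the paper does.

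Two smaller remarks. Your $(1)\Rightarrow(2)$ via iterates $g^k$ and asymptotics is correct but unnecessarily heavy: the paper just looks at a single fundamental domain $[p,gp]$ and observes directly that an edge outside $\Delta(f)$, or a single illegal turn, already forces $d(f(p),f(gp))<\sigma(f)\cdot l_T(g)$, hence $l_{T'}(g)<\sigma(f)\cdot l_T(g)$. And your expectation that $(2)\Rightarrow(3)$ is ``where the real work is'' is miscalibrated: once $A_g$ is legal, $f|_{A_g}$ is locally injective, so $f(A_g)$ is a $g$-invariant line, hence $A'_g$; the paper dispatches this in one sentence, and your careful discussion of local-to-global injectivity in $\mathbb{R}$-trees, while correct, is more than is needed.
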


\begin{proof}

$(2)\Rightarrow (3)\Rightarrow (1)$ Since $f$ is $G$-equivariant and $A_g$ is legal, the image $f(A_g)$ is a $g$-invariant line and thus equals $A_g'$. Consequently, and since $A_g$ is assumed to lie in the tension forest $\Delta(f)$, for $p\in A_g$ we have $$l_{T'}(g) = d(g f(p),f(p)) = d(f(g p),f(p))= \sigma(f)\cdot d(g p,p) = \sigma(f)\cdot l_T(g)$$ and we conclude that $\sigma(f)=\frac{l_{T'}(g)}{l_T(g)}$.

$(1)\Rightarrow (2)$ We will argue by contradiction. First, suppose that $A_g$ is not contained in the tension forest $\Delta(f)$. Then for any $p\in A_g$ the segment $[p,gp]$ is stretched by strictly less than $\sigma(f)$ and we have
\begin{align*}
l_{T'}(g) \leq d(g f(p),f(p)) &= d(f(g p),f(p))\\
&< \sigma(f)\cdot d(g p,p) = \sigma(f)\cdot l_T(g)
\end{align*}
whence $\sigma(f)>\frac{l_{T'}(g)}{l_T(g)}$. On the other hand, if $A_g$ is contained in $\Delta(f)$ but not legal with respect to the train track structure defined by $f$ then there exists a vertex $v\in V(A_g)$ at which the two directions of $A_g$ define the same gate. The images of $[g^{-1}v,v]$ and $[v,gv]$ under $f$ then overlap in a segment of positive length and $l_{T'}(g)$ is strictly smaller than $\sigma(f)\cdot l_T(g)$, whence $\frac{l_{T'}(g)}{l_T(g)}<\sigma(f)$.
\end{proof}

\begin{proof}[Proof of Theorem~\ref{thm: inf = sup}]
Since $T$ and $T'$ are irreducible, there exists an optimal map $f\colon T\to T'$ (this is the only step in the proof that uses irreducibility). By Lemma~\ref{lem: characterization of witnesses}, in order to prove the claim, it suffices to find a hyperbolic group element $\xi\in G$ whose axis $A_\xi\subseteq T$ is contained in $\Delta=\Delta(f)$ and legal with respect to the train track structure defined by $f$. It will be clear from our construction of $\xi$ that a fundamental domain for the action of $\xi$ on $A_\xi$ meets each $G$-orbit of vertices of $T$ at most 10 times, i.e., $\xi$ is a candidate.

Since $\Delta$ has at least 2 gates at every vertex, we can find a legal ray $R\subset\Delta$ based at some vertex $v_0\in V(\Delta)$. There always exists a vertex $x\in V(R)$ such that $x=gx_0$ for some $x_0\in [v_0,x)$ and some hyperbolic group element $g\in G$, which can be seen as follows: Since $T$ is minimal and therefore cocompact, there are only finitely many $G$-orbits of vertices in $T$. We can thus find pairwise distinct vertices $x_0,x_1,x_2\in V(R)$ and $g_1,g_2\in G$ such that $x_1=g_1x_0$ and $x_2=g_2x_1$. If either $g_1$ or $g_2$ is hyperbolic, we are done. If both are elliptic, each $g_i$ fixes only the midpoint of the segment $[x_{i-1},x_i]$ and the product $g=g_2g_1$ maps $x_0$ to $x_2$. The fixed point sets of $g_1$ and $g_2$ being disjoint, $g$ is hyperbolic by Proposition~\ref{prop: translation formulas}(\ref{it: elliptics}).

We choose $x$ to be the first vertex of $R$ with this property, for which the segment $[x_0,x]$ meets the $G$-orbit of $x_0$ at most 3 times (there could lie an elliptic translate of $x_0$ in between $x_0$ and $gx_0$) and each $G$-orbit of vertices other than that of $x_0$ at most 2 times. The segment $[x_0,x]\subset R$ is then a closed fundamental domain for the action of $g$ on $A_g$ and the stretching factor of $f$ on any subsegment of $A_g$ equals that of $f$ on $[x_0,x]\subset\Delta$, whence $A_g\subseteq\Delta$. If $A_g$ is legal, we are done. If not, since all turns of $A_g$ in between $x_0$ and $x$ are legal but $A_g$ is assumed illegal, the turns at $x_0$ and $x$ must be illegal. We then have $A_g\cap R=[x_0,x]$ and we continue moving along the legal ray $R$ until we reach the first vertex $y\in V(R)$ with $y=hy_0$ for some $y_0\in(x,y)$ and some hyperbolic group element $h\in G$. Analogously, the segment $[y_0,y]$ meets the $G$-orbit of $y_0$ at most 3 times and each $G$-orbit of vertices other than that of $y_0$ at most 2 times. Note that the open segment $(x,y_0)$ meets each $G$-orbit of vertices of $T$ at most 2 times.

If $A_h\subseteq\Delta$ is legal, we are done. If not, we have $A_g\cap A_h=\emptyset$ and the product $hg$ is hyperbolic by Proposition~\ref{prop: translation formulas}(\ref{it: hyperbolics}). A closed fundamental domain for the action of $hg$ on its axis $A_{hg}$ is given by $[x_0,hx]=[x_0,x]\cup [x,y_0]\cup [y_0,y]\cup h[y_0,x]\subset\Delta$, since we have $hg[x_0,hx]\cap [x_0,hx]=\left\{hx\right\}$ (see Figure~\ref{fig: legal disjoint}). We conclude that $A_{hg}\subseteq\Delta$. In particular, the fundamental domain $[x_0,hx)$ meets each $G$-orbit of vertices of $T$ at most $3+2+3+2=10$ times and $hg$ is a candidate of $T$.
\begin{figure}[h!]
$$\begin{tikzpicture}
\node [right] at (12,0) {$A_g$};
\node [above] at (3,0) {$x_0$};
\node [above] at (6,0) {$x=gx_0$};
\node [above] at (9,0) {$gx$};
\node [right] at (4,-2) {$g^{-1}A_h$};
\node [right] at (12,-4) {$A_h$};
\node [below] at (3,-4) {$h^{-1}y_0$};
\node [below] at (6,-4) {$y_0$};
\node [above] at (9,-2) {$hx$};
\node [above] at (11,-2) {$hgx$};
\node [below] at (9,-4) {$y=hy_0$};
\node [right] at (12,-2) {$hA_g$};
\draw [-] [line width=0.075cm](3,0) -- (6,0);
\draw [-] [line width=0.075cm](6,0) -- (6,-4);
\draw [-] [line width=0.075cm](6,-4) -- (9,-4);
\draw [-] [dashed, line width=0.075cm](9,-4) -- (9,-2);
\draw [->] [line width=0.075cm](9,-2) -- (9,-2.5);
\node [left] at (9,-2.4) {$\delta_{hx,y}$};
\draw [->] [line width=0.075cm](9,-2) -- (9.5,-2);
\node [below] at (10,-2) {$hg\delta_{x_0,x}$};
\draw [->] (0,0) -- (12,0);
\draw [->] (0,-2) -- (4,-2);
\draw [-] (3,0) -- (3,-2);
\draw [-] (6,0) -- (6,-4);
\draw [-] (9,-2) -- (9,-4);
\draw [->] (8,-2) -- (12,-2);
\draw [->] (0,-4) -- (12,-4);
\draw [fill] (3,0) circle [radius=.05];
\draw [fill] (6,0) circle [radius=.05];
\draw [fill] (9,0) circle [radius=.05];
\draw [fill] (3,-4) circle [radius=.05];
\draw [fill] (6,-4) circle [radius=.05];
\draw [fill] (9,-2) circle [radius=.05];
\draw [fill] (11,-2) circle [radius=.05];
\draw [fill] (9,-4) circle [radius=.05];
\end{tikzpicture}$$
\caption{The segment $[x_0,hx]$ (both bold and dashed, where we know that the bold part lies in $R$) and the directions $hg\delta_{x_0,x}$ and $\delta_{hx,y}$ (arrows).}
\label{fig: legal disjoint}
\end{figure}
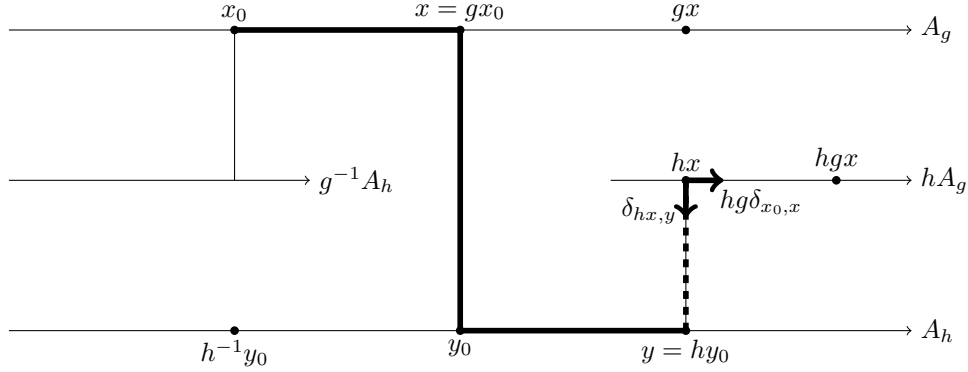

In order to show that $A_{hg}$ is legal, it suffices to show that $[x_0,hx]$ does not make any illegal turns and that the directions $hg\delta_{x_0,x}$ and $\delta_{hx,y}$ are not equivalent. By the legality of $R$, it is clear that all turns of the subsegment $[x_0,y]$ are legal. The turn of $[x_0,hx]$ at $y$ is legal if and only if $\delta_{y,y_0}\nsim \delta_{y,hx}$. This is equivalent to $\delta_{y_0,h^{-1}y_0}\nsim \delta_{y_0,x}$ but which is true since $A_h$ is assumed illegal (i.e., $\delta_{y_0,h^{-1}y_0}\sim\delta_{y_0,y}$) and $\delta_{y_0,y}\nsim\delta_{y_0,x}$ by the legality of $R$. Lastly, we need to show that $hg\delta_{x_0,x}\nsim\delta_{hx,y}$. We analogously observe that this is the case if and only if $\delta_{x,gx}\nsim\delta_{x,y_0}$ but which is true as $A_g$ is illegal (i.e., $\delta_{x,gx}\sim\delta_{x,x_0}$) and $\delta_{x,x_0}\nsim\delta_{x,y_0}$ by the legality of $R$.
\end{proof}

We give a proof of the following well-known fact in the language of trees:

\begin{prop}
\label{prop: asymmetric metric}
The Lipschitz metric on Outer space $\mathcal{PX}_n$ (Example~\ref{ex: Outer space}) is an asymmetric metric. That is, if two $F_n$-trees $T,T'\in\mathcal{PX}_n$ satisfy $d_{Lip}(T,T')=0$ then they are $F_n$-equivariantly isometric.
\end{prop}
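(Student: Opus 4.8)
The plan is to show that $d_{Lip}(T,T')=0$ forces the existence of an $F_n$-equivariant map $T\to T'$ that is an isometry on every edge, and that such a map between covolume-$1$ \emph{free} $F_n$-trees must be a global isometry because $F_n$ acts freely.

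First I would unwind the hypothesis: $d_{Lip}(T,T')=0$ means $\sigma(T,T')=1$. Since $\mathcal{X}_n$ is irreducible (Example~\ref{ex: Outer space}), Theorem~\ref{thm: irreducible infimum realized} supplies an $F_n$-equivariant $1$-Lipschitz map $f\colon T\to T'$, which I may assume to be linear on edges, with slope $\lambda_e\in[0,1]$ on each $F_n$-orbit of edges. By Lemma~\ref{lem: pre-witness}, $l_{T'}(g)\le\sigma(f)\,l_T(g)=l_T(g)$ for every $g\in F_n$ (all nontrivial elements being hyperbolic, as the action is free). In view of \cite[Theorem~3.7]{CM87} it therefore suffices to prove $l_T=l_{T'}$.

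Next I would run a volume argument on the quotient graphs $\Gamma=F_n\backslash T$ and $\Gamma'=F_n\backslash T'$, both of volume $1$. The map $f$ descends to a surjective map $\bar f\colon\Gamma\to\Gamma'$ (surjectivity since the trees are minimal), with slope $\lambda_e$ on each edge; each edge $e$ of $\Gamma$ is mapped to an immersed path of length $\lambda_e\ell(e)$ in $\Gamma'$, so, just as in the proof of Proposition~\ref{prop: Lipschitz metric}(1),
\begin{align*}
1=\mathrm{vol}(\Gamma')=\mathrm{vol}\Bigl(\textstyle\bigcup_{e\in E(\Gamma)}\bar f(e)\Bigr) &\le \sum_{e\in E(\Gamma)}\lambda_e\,\ell(e)\\
&\le \sum_{e\in E(\Gamma)}\ell(e)=\mathrm{vol}(\Gamma)=1.
\end{align*}
Equality throughout forces $\lambda_e=1$ for every edge — so $f$ is an isometry on each edge — and moreover forces each $\bar f|_e$ to be injective and the arcs $\bar f(e)$ to be pairwise essentially disjoint. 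Hence $\bar f$, and a fortiori $f$, is injective off a finite set, namely the vertices of $\Gamma$ together with the preimages of vertices of $\Gamma'$ (a finite set, since $f$ is isometric on each edge and $\Gamma'$ is finite).

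Finally I would derive $l_T=l_{T'}$. Fix $g\neq 1$ with axis $A_g\subset T$. Since $f$ is isometric on every edge, $f|_{A_g}$ can fail to be a local isometry only at a vertex $v\in A_g$, and there precisely when $f$ identifies the two directions $\delta_-,\delta_+$ of $A_g$ at $v$, i.e.\ $D_vf(\delta_-)=D_vf(\delta_+)$. If this occurs at no vertex of $A_g$, then $f(A_g)$ is a $g$-invariant geodesic line in $T'$, hence equals the axis $A'_g$, so $l_{T'}(g)=l_T(g)$. If it occurs at some $v$, then, because $f$ is isometric on the two edges carrying $\delta_-$ and $\delta_+$, there is $\varepsilon>0$ so that for each $t\in(0,\varepsilon)$ the two points at distance $t$ from $v$ along these edges are distinct but have the same image under $f$; since $F_n$ acts freely, these two points also have distinct images in $\Gamma$ (a group relation between them would fix $v$), and for all but finitely many $t$ they avoid the finite exceptional set above — contradicting the injectivity just established. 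So the first case always occurs, $l_T(g)=l_{T'}(g)$ for all $g$, and by \cite[Theorem~3.7]{CM87} the trees $T$ and $T'$ are $F_n$-equivariantly isometric.

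I expect the volume-rigidity step — extracting from the bare equality of volumes that $\bar f$ is injective off a finite set, while keeping track that an edge may be mapped to an immersed (not embedded) path in $\Gamma'$ — to be the main point requiring care; the use of freeness of the $F_n$-action in the last paragraph, to convert a fold along $A_g$ into a genuine self-overlap of $\bar f$, is where the argument genuinely uses that we are in Outer space rather than in a general deformation space (cf.\ Example~\ref{ex: not an asymmetric metric}).
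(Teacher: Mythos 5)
Your proof is correct and follows essentially the same route as the paper's: a covolume-$1$ argument on the quotient graphs forces every slope to equal $1$ and rules out overlaps, and freeness of the $F_n$-action converts a fold along an axis $A_g$ into a genuine overlap in the quotient, which is impossible. The only cosmetic difference is that you record the no-overlap conclusion up front as injectivity of $\bar f$ off a finite set, whereas the paper derives the same contradiction at the end by exhibiting a volume-$1$ fundamental domain containing both identified germs whose image would have volume strictly less than $1$.
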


\begin{proof}
Let $f\colon T\to T'$ be an optimal map with $\sigma(f)=1$. For $e\in E(T)$ we denote by $\sigma_e(f)$ the slope of $f$ on $e$. Since $f$ is surjective, the induced map on metric quotient graphs $F_n\backslash f\colon F_n\backslash T\to F_n\backslash T'$ is surjective as well and we have $$1=\mathrm{vol}(\mathrm{im}(F_n\backslash f))=\left(\sum_{e\in E(F_n\backslash T)}\sigma_e(f)\cdot\mathrm{length}(e)\right)-C$$ where $C\geq0$ measures overlaps of images of edges. Since $T$ has covolume 1 and $\sigma_e(f)\leq\sigma(f)=1$ for all $e\in E(T)$, we conclude that $1\leq\sigma(f)-C=1-C$, whence $C=0$. Consequently, we have $\sigma_e(f)=1$ for all $e\in E(T)$ and hence $\Delta(f)=T$.

The $F_n$-trees in $\mathcal{PX}_n$ are irreducible, and in order to prove the claim it suffices to show that for all hyperbolic (here, nontrivial) group elements $g\in F_n$ we have $l_T(g)=l_{T'}(g)$. On the one hand, if $g\in F_n$ is hyperbolic and $p\in A_g\subset T$ a point in its hyperbolic axis, we have $$l_{T'}(g)\leq d(f(p),gf(p))=d(f(p),f(gp))\leq\sigma(f)\cdot d(p,gp)=l_T(g).$$ On the other hand, suppose that there exists a hyperbolic group element $g\in F_n$ such that $l_{T'}(g)$ is strictly smaller than $l_T(g)$. Since the tension forest of $f$ is all of $T$, by Lemma~\ref{lem: characterization of witnesses} the hyperbolic axis $A_g\subset T$ cannot be legal with respect to the train track structure defined by $f$. Hence, we can find a vertex $v\in V(A_g)$ at which the turn defined by $A_g$ is not legal, i.e., at which the germs of two adjacent edges are mapped to the same germ under $f$. Since $F_n$ acts on $T$ freely, the two germs are not $F_n$-equivalent and we can find a fundamental domain $X\subset T$ for the action of $F_n$ on $T$ that contains the two germs and has volume 1. Its image $f(X)\subset T'$ is a fundamental domain for the action of $F_n$ on $T'$ whose volume is strictly smaller than 1, contradicting the fact that $T'$ has covolume 1.
\end{proof}

\begin{rem}
The proof of Proposition~\ref{prop: asymmetric metric} is specific for free $F_n$-trees, as the two germs may otherwise be $G$-equivalent (their common vertex may be stabilized by a nontrivial group element that swaps the two adjacent edges). In that case, we can no longer find a fundamental domain of volume 1 that contains both germs.
\end{rem}

\subsection{Convergent sequences}
\label{sec: convergent sequences}

In this section we relate topological convergence in projectivized deformation spaces of $G$-trees with convergence with respect to the (symmetrized) Lipschitz metric.

\begin{prop}
\label{prop: convergent sequence}
Let $\mathcal{PD}$ be a projectivized deformation space of irreducible $G$-trees and $(T_k)_{k\in\mathbb{N}}$ a sequence of $G$-trees in $\mathcal{PD}$ that converges to $T\in\mathcal{PD}$ in the weak topology. Then $\lim_{k\to\infty} d_{Lip}^{sym}(T_k,T)=0$.
\end{prop}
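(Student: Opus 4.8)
The plan is to exploit the fact that weak convergence $T_k \to T$ means, after passing to a subsequence that lands in a single open cone, that the $T_k$ are all $G$-equivariantly homeomorphic to $T$ with edge lengths converging to those of $T$. Since $\mathcal{PD}$ is a simplicial complex with missing faces and the weak topology describes it as a union of open cones glued along faces, a convergent sequence eventually lies in finitely many closed simplices adjacent to $T$, and it suffices to handle the case where all $T_k$ lie in one such closed cone: the $T_k$ share an underlying nonmetric $G$-tree with $T$ (possibly after collapsing the edges of $T$ whose length tends to $0$), with the length vector of $T_k$ converging to that of $T$ in the appropriate Euclidean simplex.

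First I would reduce to the single-cone situation and fix the common underlying $G$-tree, writing $\ell_k = (\ell_k(e))$ for the $G$-orbit-indexed length vector of $T_k$ and $\ell$ for that of $T$, so that $\ell_k \to \ell$ coordinatewise, with the caveat that some coordinates of $\ell$ may be zero (the corresponding edges are collapsed in $T$). The key construction is the obvious ``identity'' $G$-equivariant map $f_k \colon T_k \to T$ and its inverse $g_k \colon T \to T_k$: on each edge $e$ with $\ell(e) > 0$, send it linearly to the corresponding edge, with slope $\ell(e)/\ell_k(e)$ in one direction and $\ell_k(e)/\ell(e)$ in the other; on edges with $\ell(e) = 0$ there is a bit more care, but one can fold or collapse appropriately. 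Then $\sigma(f_k) \le \max_e \ell(e)/\ell_k(e)$ and $\sigma(g_k) \le \max_e \ell_k(e)/\ell(e)$ (ranging over $G$-orbits of edges, of which there are finitely many by cocompactness). Both maxima tend to $1$ as $k \to \infty$, since each coordinate ratio does; hence $d_{Lip}(T_k, T) = \log \sigma(T_k, T) \le \log \sigma(f_k) \to 0$ and likewise $d_{Lip}(T, T_k) \to 0$, so $d_{Lip}^{sym}(T_k, T) \to 0$. Here I use Proposition~\ref{prop: Lipschitz metric}(1) to know $d_{Lip} \ge 0$, so the limit is genuinely $0$ rather than merely bounded above by something going to $0$.

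I expect the main obstacle to be the edges of $T$ that have length $0$ but ``exist'' in the nearby cones $T_k$ — i.e. the scenario where $T$ is obtained from the $T_k$ (or from a common refinement) by an elementary collapse, so that the $T_k$ live in a face of higher dimension than the cone of $T$, or conversely $T$ is a degeneration sitting on the boundary. In that case the naive linear identity map is not defined everywhere, and on the collapsed edges the map $T \to T_k$ has to open up an edge of small length $\ell_k(e) \to 0$; building this $G$-equivariantly while keeping the Lipschitz constant close to $1$ requires invoking the description of the weak topology via open cones and the fact that the covolume-1 normalization is continuous (both recalled in Section~\ref{sec: topologies}), together with the remark there that the weak topology agrees with the equivariant Gromov--Hausdorff topology on finite unions of open cones. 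Concretely, one chooses a $G$-equivariant combinatorial model valid on a whole closed simplex containing both $T$ and a tail of the sequence, and estimates slopes edge-by-edge; the estimate on the collapsed edges is where the ratios could a priori blow up, and the point is that collapsed edges do not need to be \emph{hit} isometrically — one can route the map through a short detour, or use that the covolume constraint forces the non-collapsed lengths to stay bounded away from $0$ and $\infty$. Once this combinatorial bookkeeping is set up, the convergence $\sigma(f_k), \sigma(g_k) \to 1$ is immediate from $\ell_k \to \ell$.
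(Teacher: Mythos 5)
Your argument is correct in outline but takes a genuinely different route from the paper. The paper does not construct explicit maps at all: it applies Theorem~\ref{thm: inf = sup} to write $d_{Lip}(T_k,T)=\log\bigl(l_T(\xi_k)/l_{T_k}(\xi_k)\bigr)$ for candidates $\xi_k\in\mathrm{cand}(T_k)$, reduces (as you do) to a subsequence lying in a single cone so that the candidate set and the finitely many values of $l_T(\xi_k)$ stabilize, and then concludes from pointwise convergence of translation length functions in the axes topology; the reverse direction uses candidates of $T$ and is easier. Your approach instead builds near-isometries $f_k\colon T_k\to T$ and $g_k\colon T\to T_k$ by hand from the cone structure. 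What your route buys is independence from the witness machinery (and hence, in principle, from irreducibility, which the paper needs only to produce optimal maps via Theorem~\ref{thm: irreducible infimum realized}); what the paper's route buys is that the same candidate/discreteness argument is recycled later (e.g.\ in Proposition~\ref{prop: parabolics leave thick part}). The one detail you flag --- the map $T\to T_k$ over collapsed edges --- does work, and for the reason you half-state: for each vertex $v$ of $T$ the preimage component $F_v\subset T_k$ of $v$ under the collapse is $G_v$-invariant, and since $G_v$ is elliptic in $T_k$ you can choose a $G_v$-fixed basepoint $p_v\in F_v$ equivariantly; sending each edge $[u,w]$ of $T$ linearly onto the geodesic $[p_u,p_w]$ is then well defined and $G$-equivariant, and the ``detour'' through $F_u$ and $F_w$ is a \emph{fixed} combinatorial edge path (the underlying nonmetric tree does not vary with $k$) all of whose edges have $T_k$-length tending to $0$, so its length tends to $0$ and the slopes tend to $1$. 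Note that it is this fixed-combinatorics observation, not the covolume normalization keeping surviving lengths bounded away from $0$, that controls the detour; I would make that explicit, since the diameter of a component $F_v$ is not in general controlled by the covolume alone.
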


In the weak topology, $\mathcal{PD}$ is homeomorphic to the covolume-1-section in the unprojectivized deformation space $\mathcal{D}$. Thus, the sequence $(T_k)_{k\in\mathbb{N}}$ weakly converges to $T$ also as covolume-1-representatives in $\mathcal{D}$. The weak topology being the finest of the three topologies, $(T_k)_{k\in\mathbb{N}}$ converges to $T$ in all three topologies, where convergence in the unprojectivized axes topology means that for all $g\in G$ we have $\lim_{k\to\infty}{l_{T_k}(g)}=l_T(g)$ (pointwise convergence of translation length functions).

\begin{proof}
We will first show that $\lim_{k\to\infty}d_{Lip}(T_k,T)=0$. Let $(f_k\colon T_k\to T)_{k\in\mathbb{N}}$ be a sequence of optimal maps. By Theorem~\ref{thm: inf = sup}, for all $k\in\mathbb{N}$ there exists a candidate $\xi_k\in \mathrm{cand}(T_k)\subset G$ such that $$d_{Lip}(T_k,T)=\log\left(\frac{l_T(\xi_k)}{l_{T_k}(\xi_k)}\right).$$
Since the sequence $(T_k)_{k\in\mathbb{N}}$ converges weakly, it meets only finitely many open simplices of $\mathcal{PD}$ and the $G$-trees $(T_k)_{k\in\mathbb{N}}$ are of only finitely many $G$-equivariant homeomorphism types. After decomposing the sequence into subsequences (for each of which we will obtain the same result), we may assume that the $G$-trees are in fact all $G$-equivariantly homeomorphic, or even equal as nonmetric $G$-trees. The set of candidates $\mathrm{cand}(T_k)\subset G$ is then independent of $k$ and $(l_T(\xi_k))_{k\in\mathbb{N}}$ takes only finitely many values. After decomposing $(T_k)_{k\in\mathbb{N}}$ into subsequences once more, we may assume that $(l_T(\xi_k))_{k\in\mathbb{N}}$ is constant, say $l_T(\xi_k)=C$ for all $k\in\mathbb{N}$.

By the remarks made above, the sequence $(T_k)_{k\in\mathbb{N}}$ converges also as covolume-1-representatives in the unprojectivized axes topology. Thus, for all $K\in\mathbb{N}$ we have $\lim_{k\to\infty}{l_{T_k}(\xi_K)}=l_T(\xi_K)=C$. Recall that the candidates $(\xi_K)_{K\in\mathbb{N}}\subset G$ give rise to only finitely many different edge loops in the quotient graph $G\backslash T_1$. In fact, if two candidates $\xi_{K_1}$ and $\xi_{K_2}$ give rise to the same edge loop in $G\backslash T_1$, they give rise to the same edge loop in $G\backslash T_k$ for all $k\in\mathbb{N}$ (because the $G$-trees $(T_k)_{k\in\mathbb{N}}$ are equal as nonmetric $G$-trees). Thus, the family of sequences $\left\{(l_{T_k}(\xi_K))_{k\in\mathbb{N}}\ |\ K\in\mathbb{N}\right\}$ is finite and for all $\varepsilon>0$ there exists $N>0$ such that for all $K\in\mathbb{N}$ we have $$|C-l_{T_k}(\xi_K)|<\varepsilon$$ whenever $k\geq N$. In particular, we have $|C-l_{T_k}(\xi_k)|<\varepsilon$ whenever $k\geq N$ and we conclude that $\lim_{k\to\infty}l_{T_k}(\xi_k)=C$. Consequently, $$\lim_{k\to\infty} d_{Lip}(T_k,T)=\log\left(\lim_{k\to\infty}\frac{l_T(\xi_k)}{l_{T_k}(\xi_k)}\right)=\log\left(\frac{C}{\lim_{k\to\infty}l_{T_k}(\xi_k)}\right)=\log\left(1\right)=0.$$
Showing that $\lim_{k\to\infty} d_{Lip}(T,T_k)=0$ is similar but easier, because it does not require that the sequence $(T_k)_{k\in\mathbb{N}}$ meets only finitely many open simplices of $\mathcal{PD}$.
\end{proof}

As for the converse of Proposition~\ref{prop: convergent sequence}, we have the following:

\begin{prop}
\label{prop: symmetrized convergent sequence}
Let $\mathcal{PD}$ be a projectivized deformation space of irreducible $G$-trees and $(T_k)_{k\in\mathbb{N}}$ a sequence of $G$-trees in $\mathcal{PD}$ such that for some $T\in\mathcal{PD}$ we have $\lim_{k\to\infty} d_{Lip}^{sym}(T_k,T)=0$. Then $(T_k)_{k\in\mathbb{N}}$ converges to $T$ in the axes topology.
\end{prop}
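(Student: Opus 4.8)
The plan is to run the translation-length estimate from the proof of Proposition~\ref{prop: symmetrized metric} in both directions and pass to the limit. First I would unpack the hypothesis: $\lim_{k\to\infty}d_{Lip}^{sym}(T_k,T)=0$ is equivalent to $\lim_{k\to\infty}d_{Lip}(T_k,T)=0$ together with $\lim_{k\to\infty}d_{Lip}(T,T_k)=0$, i.e.\ $\lim_{k\to\infty}\sigma(T_k,T)=\lim_{k\to\infty}\sigma(T,T_k)=1$. Since $\mathcal{PD}$ consists of irreducible $G$-trees, we may view the $T_k$ and $T$ as their covolume-$1$ representatives in $\mathcal{D}$ and apply Theorem~\ref{thm: irreducible infimum realized} to obtain, for each $k$, $G$-equivariant Lipschitz maps $f_k\colon T_k\to T$ and $h_k\colon T\to T_k$ with $\sigma(f_k)=\sigma(T_k,T)$ and $\sigma(h_k)=\sigma(T,T_k)$, so that $\sigma(f_k)\to 1$ and $\sigma(h_k)\to 1$ as $k\to\infty$. (Alternatively one could take arbitrary sequences of near-optimal maps and avoid invoking the existence of minimal stretch maps.)

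Next I would recall what convergence in the axes topology means in this setting: with the trees regarded as covolume-$1$ representatives, $(T_k)_{k\in\mathbb{N}}$ converges to $T$ in the axes topology precisely when $l_{T_k}(g)\to l_T(g)$ for every $g\in G$. For elliptic $g$ there is nothing to prove, since $T_k$ and $T$ lie in the same deformation space and hence $l_{T_k}(g)=l_T(g)=0$. So I would fix a hyperbolic $g\in G$. Choosing a point $p\in A_g\subset T_k$, exactly as in the proof of Proposition~\ref{prop: symmetrized metric} we get
$$l_T(g)\le d(f_k(p),gf_k(p))=d(f_k(p),f_k(gp))\le\sigma(f_k)\cdot l_{T_k}(g),$$
and symmetrically, choosing a point on $A_g\subset T$ and using $h_k$,
$$l_{T_k}(g)\le\sigma(h_k)\cdot l_T(g).$$
Combining these yields
$$\frac{l_T(g)}{\sigma(f_k)}\le l_{T_k}(g)\le\sigma(h_k)\cdot l_T(g)$$
for every $k$, and letting $k\to\infty$ with $\sigma(f_k),\sigma(h_k)\to 1$ gives $\lim_{k\to\infty}l_{T_k}(g)=l_T(g)$.

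Since $g$ was an arbitrary hyperbolic element and the elliptic case is trivial, $l_{T_k}\to l_T$ pointwise on $G$, which is exactly convergence of $(T_k)_{k\in\mathbb{N}}$ to $T$ in the axes topology. There is no real obstacle here — this is the easy converse of Proposition~\ref{prop: convergent sequence}, and the only point that requires a little care is to work consistently with covolume-$1$ representatives, so that the estimates deliver honest pointwise convergence of the translation length functions (rather than merely projective convergence), which is what characterizes the axes topology on $\mathcal{PD}$.
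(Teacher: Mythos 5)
Your argument is correct. It rests on the same basic estimate as the paper's proof -- a $G$-equivariant Lipschitz map cannot increase translation lengths by more than its Lipschitz constant, i.e.\ Lemma~\ref{lem: pre-witness} -- but you package it more directly. The paper instead rewrites $\sigma(T_k,T)$ and $\sigma(T,T_k)$ via Theorem~\ref{thm: inf = sup} as $\sup_g\frac{l_T(g)}{l_{T_k}(g)}=\bigl(\inf_g\frac{l_{T_k}(g)}{l_T(g)}\bigr)^{-1}$ and $\sup_g\frac{l_{T_k}(g)}{l_T(g)}$, deduces that the ratio of this sup to this inf tends to $1$, and then normalizes by $I_k=\inf_g\frac{l_{T_k}(g)}{l_T(g)}$ to get (multiplicatively uniform) convergence of $\frac{1}{I_k}l_{T_k}$ to $l_T$, concluding only projectivized convergence with unspecified constants $C_k=\frac{1}{I_k}$. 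Your sandwich $\frac{l_T(g)}{\sigma(T_k,T)}\leq l_{T_k}(g)\leq \sigma(T,T_k)\cdot l_T(g)$ skips the sup/inf bookkeeping, does not actually need Theorem~\ref{thm: irreducible infimum realized} or the existence of witnesses (as you note, near-optimal maps or just Lemma~\ref{lem: pre-witness} suffice), and delivers the formally cleaner conclusion that the covolume-$1$ length functions converge pointwise with $C_k=1$, which of course implies convergence in the (projectivized) axes topology. Your handling of the elliptic case and of the fact that hyperbolicity is a deformation-space invariant is also correct. In short: same key inequality, slightly more elementary route, marginally stronger output.
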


In contrast to convergence in the unprojectivized axes topology, the sequence $(T_k)_{k\in\mathbb{N}}$ converges to $T$ in the projectivized axes topology if there exist positive real numbers $(C_k)_{k\in\mathbb{N}}$ such that for all $g\in G$ we have $\lim_{k\to\infty}{C_k\cdot l_{T_k}(g)}=l_T(g)$ (pointwise convergence of projectivized translation length functions).

\begin{proof}
We will argue as in the proof of \cite[Theorem~4.11]{FM11}. For any positive real-valued function $f$ satisfying $\sup\frac{1}{f(x)}<\infty$ we have $\sup\frac{1}{f(x)}=\frac{1}{\inf f(x)}$. Therefore, since $$\frac{1}{\frac{l_{T_k}(g)}{l_T(g)}}=\frac{l_T(g)}{l_{T_k}(g)}\leq\sigma(T_k,T)<\infty$$ for all hyperbolic group elements $g\in G$, we have $$\lim_{k\to\infty} d_{Lip}^{sym}(T_k,T)=0\quad\Leftrightarrow\quad\lim_{k\to\infty}\frac{\sup_{g}\frac{l_{T_k}(g)}{l_T(g)}}{\inf_{g}\frac{l_{T_k}(g)}{l_T(g)}}=1.$$ Assuming that $\lim_{k\to\infty} d_{Lip}^{sym}(T_k,T)=0$, we conclude that for all $\varepsilon>0$ there exists $K\in\mathbb{N}$ such that for all $k\geq K$ we have
\begin{equation}
\inf_{g}\frac{l_{T_k}(g)}{l_T(g)} \leq \sup_{g}\frac{l_{T_k}(g)}{l_T(g)}\leq \inf_{g}\frac{l_{T_k}(g)}{l_T(g)}\cdot (1+\varepsilon).
\label{eq: inf and sup inequality}
\end{equation}

Clearly, for all hyperbolic group elements $\xi\in G$ we have $$\inf_{g}\frac{l_{T_k}(g)}{l_T(g)}\leq\frac{l_{T_k}(\xi)}{l_T(\xi)}\leq\sup_{g}\frac{l_{T_k}(g)}{l_T(g)}.$$ Setting $I_k=\inf_{g}\frac{l_{T_k}(g)}{l_T(g)}$, inequality (\ref{eq: inf and sup inequality}) implies that $I_k\leq\frac{l_{T_k}(\xi)}{l_T(\xi)}\leq I_k\cdot (1+\varepsilon)$ whenever $k\geq K$. In particular, the unprojectivized translation length functions $(\frac{1}{I_k}l_{T_k})_{k\in\mathbb{N}}$ converge to $l_T$ uniformly and \emph{a fortiori} pointwise. We conclude that the $G$-trees $(T_k)_{k\in\mathbb{N}}$ converge to $T$ in the projectivized axes topology.
\end{proof}

Recall from Section~\ref{sec: topologies} that if $\mathcal{PD}$ is a projectivized deformation space of locally finite irreducible $G$-trees with finitely generated vertex stabilizers then the equivariant Gromov-Hausdorff topology, the axes topology, and the weak topology agree on $\mathcal{PD}$.

\begin{cor}
\label{cor: symmetrized metric topology}
If $\mathcal{PD}$ consists of locally finite irreducible $G$-trees with finitely generated vertex stabilizers then the symmetrized Lipschitz metric $d_{Lip}^{sym}$ induces the standard topology on $\mathcal{PD}$.
\end{cor}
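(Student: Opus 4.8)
The plan is to deduce the corollary by combining Propositions~\ref{prop: convergent sequence} and \ref{prop: symmetrized convergent sequence} with the fact, recalled immediately before the statement, that under the present hypotheses the weak topology, the equivariant Gromov--Hausdorff topology, and the axes topology all coincide on $\mathcal{PD}$; write $\tau$ for this common ``standard'' topology and $\tau_{Lip}$ for the topology induced by $d_{Lip}^{sym}$. By Proposition~\ref{prop: symmetrized metric} the function $d_{Lip}^{sym}$ is a genuine metric, so $\tau_{Lip}$ is metrizable, in particular first-countable. On the other hand, in the weak topology $\mathcal{PD}$ is a locally finite complex with missing faces (Section~\ref{sec: topologies}), hence $\tau$ is first-countable as well. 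Since two first-countable topologies on the same set coincide as soon as they have the same convergent sequences, it suffices to show that $\tau$-convergence and $\tau_{Lip}$-convergence of sequences agree.

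First I would check that $\tau$-convergence implies $\tau_{Lip}$-convergence: if $(T_k)_{k\in\mathbb{N}}$ converges to $T$ in $\tau$, equivalently in the weak topology, then Proposition~\ref{prop: convergent sequence} gives $\lim_{k\to\infty} d_{Lip}^{sym}(T_k,T)=0$, i.e.\ $T_k\to T$ in $\tau_{Lip}$. For the converse, suppose $\lim_{k\to\infty} d_{Lip}^{sym}(T_k,T)=0$; then Proposition~\ref{prop: symmetrized convergent sequence} shows that $(T_k)_{k\in\mathbb{N}}$ converges to $T$ in the axes topology, which under the present hypotheses is exactly $\tau$. Thus $\tau$ and $\tau_{Lip}$ have the same convergent sequences and therefore coincide.

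Equivalently one may phrase this via the identity map: Proposition~\ref{prop: convergent sequence} says $\mathrm{id}\colon(\mathcal{PD},\tau)\to(\mathcal{PD},\tau_{Lip})$ is sequentially continuous, and Proposition~\ref{prop: symmetrized convergent sequence} (together with the identification of the axes topology with $\tau$) says $\mathrm{id}\colon(\mathcal{PD},\tau_{Lip})\to(\mathcal{PD},\tau)$ is sequentially continuous; in each case the source is first-countable, so sequential continuity upgrades to continuity and the identity is a homeomorphism. I do not expect a real obstacle here; the only point requiring care is that Proposition~\ref{prop: symmetrized convergent sequence} delivers convergence in the axes topology rather than directly in the weak topology, so one must invoke the coincidence of the three topologies --- which is precisely where the hypotheses of local finiteness and finite generation of vertex stabilizers enter.
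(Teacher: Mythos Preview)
Your proposal is correct and follows essentially the same approach as the paper: reduce to showing the two topologies have the same convergent sequences, then invoke Propositions~\ref{prop: convergent sequence} and~\ref{prop: symmetrized convergent sequence} together with the coincidence of the three topologies. The only cosmetic difference is that the paper justifies the reduction by noting that the locally finite complex $\mathcal{PD}$ is metrizable (so both topologies are metric topologies, hence determined by their convergent sequences), whereas you argue first-countability; either works.
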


\begin{proof}
Since the locally finite complex $\mathcal{PD}$ is metrizable, it suffices to show that the two topologies have the same convergent sequences. This immediately follows from Propositions~\ref{prop: convergent sequence} and \ref{prop: symmetrized convergent sequence} and the fact that the three topologies agree on $\mathcal{PD}$.
\end{proof}

\begin{ex}
The symmetrized Lipschitz metric induces the standard topology on the projectivized deformation spaces discussed in Examples~\ref{ex: Outer space}, \ref{ex: virtually free group}, and \ref{ex: nonelementary GBS groups}.
\end{ex}

\subsection{Folding paths and geodesics}
\label{sec: geodesics}

Let $\mathcal{D}$ be a deformation space of $G$-trees and $T,T'\in\mathcal{D}$. A $G$-equivariant map $f\colon T\to T'$ is \emph{simplicial} if it maps each edge of $T$ isometrically to an edge of $T'$. A $G$-equivariant map $f\colon T\to T'$ is a \emph{morphism} if it is an isometry on edges (but not necessarily simplicial) or, equivalently, if the simplicial structures on $T$ and $T'$ may be subdivided such that $f$ becomes simplicial.

Suppose we are given two $G$-trees $T,T'\in\mathcal{D}$ and a morphism $f\colon T\to T'$. Skora \cite{Sk89} has described a technique of ``folding $T$ along $f$'' to obtain a 1-parameter family of $G$-trees $(T_t)_{t\in[0,\infty]}$ together with morphisms $\phi_t\colon T\to T_t$ and $\psi_t\colon T_t\to T'$ such that
\begin{itemize}
\item $T_0=T$ and $T_\infty=T'$;
\item $\phi_0=id_T$, $\phi_\infty=\psi_0=f$, and $\psi_\infty=id_{T'}$;
\item for all $t\in [0,\infty]$ the following diagram commutes:
$$\begin{xy}\xymatrix{T\ar[rr]^f\ar[dr]_{\phi_t} & & T'\\ & T_t\ar[ru]_{\psi_t} &}\end{xy}$$
\end{itemize}
Explicitly, for $t\in[0,\infty]$ we let $\sim_t$ be the equivalence relation on $T$ generated by $x\sim_t y$ if $f(x)=f(y)$ and $f([x,y])\subseteq D_t(f(x))$, where $D_t(f(x))$ denotes the closed ball of radius $t$ around $f(x)\in T'$. As a set, we define $T_t$ as the quotient $T/\sim_t$. We let $\phi_t\colon T\to T_t$ be the $G$-equivariant quotient map and $\psi_t\colon T_t\to T'$ the unique induced $G$-equivariant map, and we equip $T_t$ with the maximal metric making both $\phi_t$ and $\psi_t$ 1-Lipschitz.

For all $t\in[0,\infty]$, after possibly restricting to the unique minimal $G$-invariant subtree, the $G$-tree $T_t$ lies in the deformation space $\mathcal{D}$. The map $[0,\infty]\to\mathcal{D},\ t\to T_t$ is continuous in the equivariant Gromov-Hausdorff topology (see \cite{Sk89}, \cite{Cl05}, or \cite{GL07fp}) and therefore also in the axes topology. The intermediate $G$-trees $(T_t)_{t\in[0,\infty]}$ are contained in a finite union of open cones \cite[Lemma~6.5]{GL07}, because of which the map is also continuous in the weak topology. We obtain a path $[0,\infty]\to\mathcal{PD},\ t\to T_t$ that is continuous in all three topologies.

As in \cite{FM11} in the special case of Outer space, one can make use of folding paths to construct geodesics in projectivized deformation spaces of irreducible $G$-trees:

\begin{de}
\label{de: geodesic}
Let $\mathcal{PD}$ be a projectivized deformation space of $G$-trees. A path $\gamma\colon[a,b]\to\mathcal{PD},\ t\mapsto\gamma(t)$ with $a<b\in\mathbb{R}$ is \emph{$d_{Lip}$-continuous} if for all convergent sequences $(x_n)_{n\in\mathbb{N}}\subset [a,b]$ with $\lim_{n\to\infty}{x_n}=x$ we have $$\lim_{n\to\infty}{d_{Lip}(\gamma(x_n),\gamma(x))}=0\quad\text{and}\quad\lim_{n\to\infty}{d_{Lip}(\gamma(x),\gamma(x_n))}=0.$$
We say that a $d_{Lip}$-continuous path $\gamma\colon[a,b]\to\mathcal{PD},\ t\mapsto\gamma(t)$ with $a<b\in\mathbb{R}$ is a \emph{$d_{Lip}$-geodesic} if for all $x<y<z\in[a,b]$ we have $$d_{Lip}(\gamma(x),\gamma(y))+d_{Lip}(\gamma(y),\gamma(z))=d_{Lip}(\gamma(x),\gamma(z)).$$
\end{de}

\begin{rem}
In metric spaces, geodesics in the above sense can be reparametrized to have unit speed. However, since $d_{Lip}$ is an asymmetric pseudometric, unit speed reparametrizations in $\mathcal{PD}$ need not always exist.
\end{rem}

\begin{lem}\label{lem: geodesics}
Let $\mathcal{PD}$ be a projectivized deformation space of irreducible $G$-trees and $\gamma\colon[a,b]\to\mathcal{PD}$ be a $d_{Lip}$-continuous path with $a<b\in\mathbb{R}$. If for all $x<y<z\in[a,b]$ there exists a hyperbolic group element $\xi\in G$ such that
\begin{equation}
\label{eq: double witness}
\sigma(\gamma(x),\gamma(y))=\frac{l_{\gamma(y)}(\xi)}{l_{\gamma(x)}(\xi)}\quad\text{and}\quad\sigma(\gamma(y),\gamma(z))=\frac{l_{\gamma(z)}(\xi)}{l_{\gamma(y)}(\xi)}
\end{equation}
then $\gamma$ is a $d_{Lip}$-geodesic.
\end{lem}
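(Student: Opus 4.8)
The plan is to establish the two inequalities that together yield the additivity condition in Definition~\ref{de: geodesic}. Fix $x<y<z\in[a,b]$. The subadditivity direction
$$d_{Lip}(\gamma(x),\gamma(z))\leq d_{Lip}(\gamma(x),\gamma(y))+d_{Lip}(\gamma(y),\gamma(z))$$
is immediate from the triangle inequality for $d_{Lip}$ proved in Proposition~\ref{prop: Lipschitz metric}(3). So the entire content of the lemma lies in the reverse inequality, which is where the common-witness hypothesis comes in.

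For the reverse inequality I would use the common witness $\xi\in G$ provided by hypothesis~(\ref{eq: double witness}). Multiplying the two displayed equations there gives
$$\sigma(\gamma(x),\gamma(y))\cdot\sigma(\gamma(y),\gamma(z))=\frac{l_{\gamma(y)}(\xi)}{l_{\gamma(x)}(\xi)}\cdot\frac{l_{\gamma(z)}(\xi)}{l_{\gamma(y)}(\xi)}=\frac{l_{\gamma(z)}(\xi)}{l_{\gamma(x)}(\xi)}.$$
On the other hand, Lemma~\ref{lem: pre-witness} applied to the pair $(\gamma(x),\gamma(z))$ and the hyperbolic element $\xi$ gives $\sigma(\gamma(x),\gamma(z))\geq\frac{l_{\gamma(z)}(\xi)}{l_{\gamma(x)}(\xi)}$. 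Combining these, $\sigma(\gamma(x),\gamma(z))\geq\sigma(\gamma(x),\gamma(y))\cdot\sigma(\gamma(y),\gamma(z))$, and applying the (monotone) logarithm yields
$$d_{Lip}(\gamma(x),\gamma(z))\geq d_{Lip}(\gamma(x),\gamma(y))+d_{Lip}(\gamma(y),\gamma(z)).$$

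Putting the two inequalities together gives the required equality for every triple $x<y<z$ in $[a,b]$. Since $\gamma$ is assumed $d_{Lip}$-continuous, it then satisfies Definition~\ref{de: geodesic} and is therefore a $d_{Lip}$-geodesic. I do not expect a genuine obstacle here: once the common witness is in hand the argument is a one-line computation, and $d_{Lip}$-continuity is a hypothesis rather than something to be checked. The only point deserving a sentence of care is that all the translation lengths $l_{\gamma(t)}(\xi)$ occurring are strictly positive, so that the divisions above are legitimate; this holds because $\xi$ is hyperbolic and all $G$-trees in $\mathcal{PD}$ lie in a single deformation space, so being hyperbolic is independent of which point $\gamma(t)$ we look at.
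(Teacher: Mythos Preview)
Your proof is correct and follows essentially the same route as the paper's: both multiply the two witness equations, compare with the ratio $\frac{l_{\gamma(z)}(\xi)}{l_{\gamma(x)}(\xi)}$, and invoke the lower bound $\sigma(\gamma(x),\gamma(z))\geq\frac{l_{\gamma(z)}(\xi)}{l_{\gamma(x)}(\xi)}$ to obtain the reverse triangle inequality. The only cosmetic difference is that the paper phrases the lower bound via the supremum characterization of $\sigma$ (Theorem~\ref{thm: inf = sup}) rather than citing Lemma~\ref{lem: pre-witness} directly.
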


\begin{proof}
We have $$\sup_g\frac{l_{\gamma(z)}(g)}{l_{\gamma(x)}(g)}\geq\frac{l_{\gamma(z)}(\xi)}{l_{\gamma(x)}(\xi)}=\frac{l_{\gamma(y)}(\xi)}{l_{\gamma(x)}(\xi)}\cdot\frac{l_{\gamma(z)}(\xi)}{l_{\gamma(y)}(\xi)}=\sup_g\left(\frac{l_{\gamma(y)}(g)}{l_{\gamma(x)}(g)}\right)\cdot\sup_g\left(\frac{l_{\gamma(z)}(g)}{l_{\gamma(y)}(g)}\right)$$ and hence $d_{Lip}(\gamma(x),\gamma(z))\geq d_{Lip}(\gamma(x),\gamma(y))+d_{Lip}(\gamma(y),\gamma(z))$, from which we conclude that $d_{Lip}(\gamma(x),\gamma(z))= d_{Lip}(\gamma(x),\gamma(y))+d_{Lip}(\gamma(y),\gamma(z))$.
\end{proof}

By Proposition~\ref{prop: convergent sequence}, if $\mathcal{PD}$ is irreducible then any path in $\mathcal{PD}$ that is continuous in the weak topology -- such as the folding path $[0,\infty]\to\mathcal{PD},\ t\to T_t$ described above -- is $d_{Lip}$-continuous.

\begin{thm}[Existence of $d_{Lip}$-geodesics]
\label{thm: geodesics}
If $\mathcal{PD}$ is a projectivized deformation space of irreducible $G$-trees then for all $T,T'\in\mathcal{PD}$ there exists a $d_{Lip}$-geodesic $\gamma\colon [0,1]\to\mathcal{PD}$ with $\gamma(0)=T$ and $\gamma(1)=T'$.
\end{thm}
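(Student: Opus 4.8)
The plan is to reduce the statement to Lemma~\ref{lem: geodesics}: I will build a $d_{Lip}$-continuous path $\gamma\colon[0,1]\to\mathcal{PD}$ from $T$ to $T'$ and exhibit a single hyperbolic element $\xi\in G$ that is a witness for the minimal stretching factor of \emph{every} ordered pair $(\gamma(a),\gamma(b))$, $a<b$, along the path; then the hypothesis of Lemma~\ref{lem: geodesics} is automatic for every triple $x<y<z$. The path will be a concatenation of a ``straight-line'' path inside a single closed cone of $\mathcal{D}$ followed by a Skora folding path.

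First I would fix an optimal map $h\colon T\to T'$ with $\sigma(h)=\sigma(T,T')=:C$ (Theorem~\ref{thm: irreducible infimum realized} and Proposition~\ref{prop: existence of optimal maps}) and, by Theorem~\ref{thm: inf = sup}, a witness $\xi\in G$ for $h$, so that $A_\xi\subseteq\Delta(h)$ is legal with respect to the train track structure defined by $h$. Writing $s_e\geq 0$ for the slope of $h$ on an edge $e$ of $T$, I rescale $T$ edgewise, giving $e$ the new length $s_e\,\ell_T(e)$ (collapsing the edges with $s_e=0$); call the result $S_0$. Then $h\colon S_0\to T'$ is a \emph{morphism}, and $S_0$ lies in $\mathcal{D}$ (it admits $G$-equivariant maps to and from $T'$, so it has the same elliptic subgroups by \cite{GL07}). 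The \emph{cone part} of $\gamma$ is the straight-line path in edge-length coordinates from $T$ to the covolume-$1$ normalization $S$ of $S_0$; the \emph{folding part} is the Skora folding path $(T_t)_{t\in[0,\infty]}$ of the morphism $h\colon S_0=T_0\to T_\infty=T'$, normalized to covolume $1$, which runs from $S=\hat T_0$ to $T'$. After an order-preserving reparametrization and concatenation at the junction $S$ we obtain $\gamma\colon[0,1]\to\mathcal{PD}$ with $\gamma(0)=T$ and $\gamma(1)=T'$. It is continuous in the weak topology — the cone part because edge lengths vary continuously, the folding part by the facts recalled before Definition~\ref{de: geodesic} (the intermediate $T_t$ lie in finitely many open cones), and normalization preserves continuity — hence it is $d_{Lip}$-continuous by Proposition~\ref{prop: convergent sequence}.

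The heart of the proof is the claim that $\xi$ is a witness for every ordered pair along $\gamma$. For pairs inside the cone part: since $A_\xi\subseteq\Delta(h)$, the axis crosses only edges on which $h$ has slope $C$, and these scale by the same factor along the straight-line path, so the identity-on-points map realizes $\sigma$ with exactly the stretching ratio $l_{\gamma(b)}(\xi)/l_{\gamma(a)}(\xi)$ of $\xi$ (sharpness by Lemma~\ref{lem: pre-witness}). For pairs inside the folding part: $A_\xi$ is legal for $h$, hence for each folding morphism $\phi_t$ and for each intermediate morphism $\rho_{s,t}\colon T_s\to T_t$ (legality for a composition passes to its factors), so each $\rho_{s,t}$ restricts to an isometry on the relevant translate of $A_\xi$ and the unnormalized $l_{T_t}(\xi)$ is independent of $t$; combining the upper bound from $\rho_{s,t}$ being $1$-Lipschitz before normalization with Lemma~\ref{lem: pre-witness} gives $\sigma(\hat T_s,\hat T_t)=l_{\hat T_t}(\xi)/l_{\hat T_s}(\xi)$ (this is also where Lemma~\ref{lem: characterization of witnesses} can be invoked directly, since a morphism has tension forest equal to the whole tree). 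For a pair straddling the junction $S$ one factors a comparison map through $S$, applies submultiplicativity of $\sigma$ together with the two cases just handled, and again closes the estimate with Lemma~\ref{lem: pre-witness}. With the claim established, Lemma~\ref{lem: geodesics} applied to $\gamma$ finishes the proof.

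I expect the main obstacles to be bookkeeping rather than conceptual: (i) verifying that the normalized Skora folding path remains continuous in the weak topology and that all intermediate trees genuinely lie in $\mathcal{D}$, which is exactly where one must lean on the cited finiteness of the set of cones met by $(T_t)$; and (ii) the careful accounting of the covolumes $\mathrm{vol}(T_t)$ and $\mathrm{vol}(S_0)$ in the translation-length computations, together with the treatment of edges collapsed by $h$ (so that $S_0$ lies on a face of the cone of $T$), so that the witness equalities $\sigma(\gamma(a),\gamma(b))=l_{\gamma(b)}(\xi)/l_{\gamma(a)}(\xi)$ come out cleanly both within each part and across the junction, as in \cite{FM11}.
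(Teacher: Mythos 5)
Your proposal is correct and follows essentially the same route as the paper: pass to an optimal map with a legal witness $\xi$ in its tension forest, rescale within the closed cone of $T$ until the map becomes a morphism, concatenate with the Skora folding path, verify that the single witness $\xi$ realizes $\sigma$ for every ordered pair along the path, and conclude via Lemma~\ref{lem: geodesics}. The only differences are cosmetic bookkeeping: the paper reaches your $S_0$ in two stages ($T\to\overline{T}\to C\overline{T}$, which coincide after projectivization) and disposes of intermediate pairs by noting the construction's self-similarity rather than by your explicit factorization through the junction.
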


\begin{proof}
Let $f\colon T\to T'$ be an optimal map and $\xi\in G$ a witness for the distance from $T$ to $T'$. By Lemma~\ref{lem: geodesics}, it suffices to construct a path $\gamma\colon[0,1]\to\mathcal{PD}$ from $T$ to $T'$ such that for all $x<y<z\in[0,1]$ we have (\ref{eq: double witness}). We will construct such a path in the unprojectivized deformation space $\mathcal{D}$, and since any witness for the minimal stretching factor between two $G$-trees remains a witness after scaling the metrics on the trees, the projection of the path to $\mathcal{PD}$ will still satisfy (\ref{eq: double witness}). In order to do so, we again regard $T$ and $T'$ as their covolume-1-representatives in $\mathcal{D}$. Let $$C=\exp(d_{Lip}(T,T'))=\frac{l_{T'}(\xi)}{l_T(\xi)}$$ and let $\overline{T}$ be the $G$-tree obtained from $T$ by $G$-equivariantly shrinking each edge of $T$ that is mapped to a point under $f$ to length 0 (collapsing these edges does not create any new elliptic subgroups, as the $G$-equivariant map $f\colon T\to T'$ factors through the quotient) and $G$-equivariantly shrinking all other edges so that they are stretched by the factor $C$ under $f$. Note that we only shrink edges in the complement of the tension forest $\Delta(f)$. Then, homothete $\overline{T}$ to $C\overline{T}$ such that $f\colon C\overline{T}\to T'$ becomes an isometry on edges, i.e., a morphism. We may now fold $C\overline{T}$ along $f$ to obtain a family of $G$-trees $(T_t)_{t\in[0,\infty]}$ that interpolate between $C\overline{T}$ and $T'$ as explained above (see Figure~\ref{fig: geodesic} for a structural sketch).
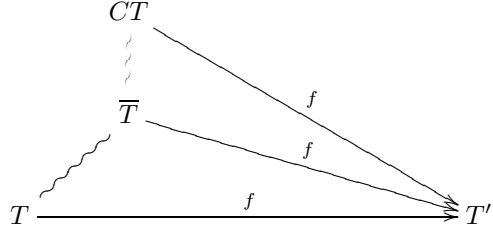
\begin{figure}[h!]
$$\begin{xy}\xymatrix{
& C\overline{T}\ar[rrrrdd]^f & & & & &\\
& \overline{T}\ar@{~}[u]\ar[rrrrd]^f & & & & &\\
T\ar@{~}[ru]\ar[rrrrr]^f & & & & & T'}\end{xy}$$
\caption{The path from $T$ to $\overline{T}$ to $C\overline{T}$ to $T'$ in $\mathcal{D}$ projects to a geodesic from $T$ to $T'$ in $\mathcal{PD}$.}
\label{fig: geodesic}
\end{figure}
This produces a path $\gamma\colon[0,1]\to\mathcal{D}$ from $T$ to $T'$ that is continuous in all three topologies and also with respect to $d_{Lip}$.

We claim that for every $G$-tree $S$ in between $T$ and $C\overline{T}$ we have $$\sigma(T,S)=\frac{l_S(\xi)}{l_T(\xi)}\quad\text{and}\quad\sigma(S,T')=\frac{l_{T'}(\xi)}{l_S(\xi)}.$$ Analogously, we claim that for every intermediate $G$-tree $T_t$ in between $C\overline{T}$ and $T'$ we have $$\sigma(T,T_t)=\frac{l_{T_t}(\xi)}{l_T(\xi)}\quad\text{and}\quad\sigma(T_t,T')=\frac{l_{T'}(\xi)}{l_{T_t}(\xi)}.$$ As for any $a\leq s\leq t\leq b$ the same construction of a path from $\gamma(s)$ to $\gamma(t)$ yields precisely the restriction of $\gamma$ to $[s,t]$, this then proves that $\gamma$ satisfies (\ref{eq: double witness}).

First, consider a $G$-tree $S$ that lies in between $T$ and $\overline{T}$. As $S$ is obtained from $T$ by shrinking edges of $T$, we have $\sigma(T,S)\leq 1$. However, as we only shrink edges outside of $\Delta(f)$, the hyperbolic axis $A_\xi\subset\Delta(f)$ is not touched and we have $\frac{l_S(\xi)}{l_T(\xi)}=1$. We may immediately deduce from this that $\sigma(T,S)=\frac{l_S(\xi)}{l_T(\xi)}$, as for all hyperbolic group elements $g\in G$ we have $\sigma(T,S)\geq\frac{l_S(g)}{l_T(g)}$ (see Lemma~\ref{lem: pre-witness}). Likewise, the map $f\colon S\to T'$ still has Lipschitz constant $C$ so that $\sigma(S,T')\leq C$. The axis $A_\xi\subset\Delta(f)\subset S$ remains legal and is stretched by the factor $C$, whence $\frac{l_{T'}(\xi)}{l_S(\xi)}=C$ and therefore $\sigma(S,T')=\frac{l_{T'}(\xi)}{l_S(\xi)}$. 

Analogously, if $S$ lies in between $\overline{T}$ and $C\overline{T}$, say $S=C'\overline{T}$ with $C'\in [1,C]$, then $\sigma(T,S)\leq C'$ and $\frac{l_S(\xi)}{l_T(\xi)}=C'$, whence $\sigma(T,S)=\frac{l_S(\xi)}{l_T(\xi)}$. The map $f\colon S\to T'$ has Lipschitz constant $\frac{C}{C'}$ and the axis $A_\xi\subset\Delta(f)\subset S$ is stretched by $\frac{C}{C'}$. We conclude that $\sigma(S,T')=\frac{l_{T'}(\xi)}{l_S(\xi)}$.

Consider now an intermediate $G$-tree $T_t$ in between $C\overline{T}$ and $T'$. As the quotient map $\phi_t\colon C\overline{T}\to T_t$ is 1-Lipschitz, the composition $T\stackrel{id}{\to} C\overline{T}\stackrel{\phi_t}{\to} T_t$ is $C$-Lipschitz. The hyperbolic axis $A_\xi\subset\Delta(f)\subset T$ is legal with respect to $f$ and hence does not get folded in $T_t=C\overline{T}/\sim_t$. We therefore have $\frac{l_{T_t}(\xi)}{l_T(\xi)}=C$, whence $\sigma(T,T_t)=\frac{l_{T_t}(\xi)}{l_T(\xi)}$. Analogously, the induced map $\psi_t\colon T_t\to T'$ is 1-Lipschitz and the hyperbolic axis $A_\xi\subset\Delta(\psi_t)\subset T_t$ is legal with respect to $\psi_t$. We conclude that $\frac{l_{T'}(\xi)}{l_{T_t}(\xi)}=1$ and hence that $\sigma(T_t,T')=\frac{l_{T'}(\xi)}{l_{T_t}(\xi)}$.
\end{proof}

\section{Displacement functions}
\label{sec: classification}

Let $\mathcal{PD}$ be a projectivized deformation space of $G$-trees and $\Phi\in \Out_\mathcal{D}(G)$. We equip $\mathcal{PD}$ with the Lipschitz metric $d_{Lip}$ and define the \emph{displacement function} associated to $\Phi$ as the function $$\widetilde{\Phi}\colon\mathcal{PD}\to\mathbb{R}_{\geq 0},\ T\mapsto d_{Lip}(T,T\Phi).$$ We call $\Phi$ \emph{elliptic} if $\inf \widetilde{\Phi}=0$ and the infimum is realized. We say that $\Phi$ is \emph{hyperbolic} if $\inf \widetilde{\Phi}>0$ and the infimum is realized. Lastly, we say that $\Phi$ is \emph{parabolic} if $\inf \widetilde{\Phi}$ is not realized.

\subsection{Elliptic automorphisms}
\label{sec: elliptics}

If $\Phi\in \Out_\mathcal{D}(G)$ is elliptic then, by definition, there exists a $G$-tree $T\in\mathcal{PD}$ such that $d_{Lip}(T,T\Phi)=0$. One would like to conclude that $T$ lies in the fixed point set of $\Phi$, but from Example~\ref{ex: not an asymmetric metric} we know that the asymmetric pseudometric $d_{Lip}$ fails to be an asymmetric metric, i.e., $d_{Lip}(T,T')=0$ does generally not imply that $T$ and $T'$ are $G$-equivariantly isometric. However, the $G$-trees $T$ and $T'$ in the counterexample are not homeomorphic and thus they do not lie in the same $\Out_{\mathcal{D}}(G)$-orbit, for they would otherwise have the same underlying metric simplicial tree. Therefore, one may still ask whether $d_{Lip}$ is an asymmetric metric on $\Out_D(G)$-orbits. As we will see, the general answer is ``no" (Example~\ref{ex: counter elliptic}) but it is ``yes" in certain cases (Proposition~\ref{prop: separation no nontrivial moduli}). This answers a question in an earlier preprint of this paper. The arguments in this section arose out of discussions with Camille Horbez and Gilbert Levitt.

\subsubsection*{The separation property of $d_{Lip}$ on $\Out_D(G)$-orbits}

Let $T\in\mathcal{PD}$ and $\Phi\in \Out_\mathcal{D}(G)$ such that $d_{Lip}(T,T\Phi)=0$. If $T$ is irreducible then there exists an optimal map $f\colon T\to T\Phi$ with $\sigma(f)=1$, and one easily shows (as in the proof of Proposition~\ref{prop: asymmetric metric}) that $f$ has stretching factor 1 on all edges of $T$. After subdividing the simplicial structures on $T$ and $T\Phi$ (independently of each other) by $G$-equivariantly adding redundant vertices, $f$ becomes simplicial (as defined in Section~\ref{sec: geodesics}). We will denote the subdivided $G$-trees again by $T$ and $T\Phi$.

If all edge stabilizers of $T$ are finitely generated then by \cite[Section~2]{BF91} the simplicial map $f$ factors as a finite composition of $G$-equivariant simplicial quotient maps, so-called \emph{folds}, which can be classified into types IA-IIIA, IB-IIIB, and IIIC (we refer the reader to \cite{BF91} for definitions). All folds other than type IIA and IIB folds irreversibly decrease the metric covolume, so they cannot occur. After subdividing the simplicial structure on $T$ once more, a type IIB fold is a composition of two type IIA folds (these subdivisions add only a finite number of $G$-orbits of vertices), so we may assume that $f$ factors as a finite composition of type IIA folds. Explicitly, a type IIA fold is a simplicial quotient map $T\to T/\sim$, where $\sim$ is a $G$-equivariant equivalence relation on $T$ that is of the following form: There are distinct edges $e_1,e_2\in E(T)$ with $\iota(e_1)=\iota(e_2)\in V(T)$ and a group element $g\in G_{\iota(e_1)}$ such that $ge_1=e_2$, and $\sim$ is the equivalence relation generated by $h e_1\sim h e_2$ for all $h\in G$. Intuitively, on the level of quotient graphs of groups, performing a type IIA fold corresponds to pulling an element of a vertex stabilizer along an edge (see Figure~\ref{fig: type IIA fold}).
\begin{figure}[h!]
$$\begin{tikzpicture}
\node [left] at (1,0) {$G$};
\node [below] at (2,0) {$E$};
\node [right] at (3,0) {$H$};
\node [left] at (8,0) {$G$};
\node [below] at (9,0) {$\langle E,g\rangle$};
\node [right] at (10,0) {$\langle H,g\rangle$};

\draw [-] (1,0) -- (3,0);
\draw [->] (4.25,0) -- (6.75,0);
\node [above] at (5.5,0) {type IIA fold};
\node [below] at (5.5,0) {$g\in G,\ g\notin E$};
\draw [-] (8,0) -- (10,0);

\draw [fill] (1,0) circle [radius=.05];
\draw [fill] (3,0) circle [radius=.05];
\draw [fill] (8,0) circle [radius=.05];
\draw [fill] (10,0) circle [radius=.05];
\end{tikzpicture}$$
\caption{The effect of a type IIA fold on the quotient graph of groups.}
\label{fig: type IIA fold}
\end{figure}

A type IIA fold always enlarges but never reduces an edge group. We will make use of this behavior to confirm the separation property of $d_{Lip}$ on $\Out_\mathcal{D}(G)$-orbits in the following special case:

\begin{prop}[Levitt]
\label{prop: separation no nontrivial moduli}
Let $\mathcal{PD}$ be a projectivized deformation space of locally finite irreducible $G$-trees with finitely generated edge stabilizers. If $\mathcal{PD}$ has no nontrivial integral modulus (see Section~\ref{sec: automorphisms acting}) and if $T\in\mathcal{PD}$ and $\Phi\in \Out_\mathcal{D}(G)$ satisfy $d_{Lip}(T,T\Phi)=0$ then $T$ and $T\Phi$ are $G$-equivariantly isometric.
\end{prop}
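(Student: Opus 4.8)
The plan is to build on the structural analysis recorded just before the statement. Since $d_{Lip}(T,T\Phi)=0$ and $T$ is irreducible, there is an optimal map $f\colon T\to T\Phi$ with $\sigma(f)=1$ and slope $1$ on every edge; after $G$-equivariantly subdividing $T$ and $T\Phi$ by adding redundant vertices, $f$ becomes a simplicial morphism, and because all edge stabilizers are finitely generated and covolume is preserved, $f$ factors as a finite composition of type IIA folds. I would show that in the present situation every one of these folds is trivial: then $f$ is a $G$-equivariant simplicial isomorphism of the subdivided trees, and since neither $T$ nor $T\Phi$ had redundant vertices to begin with, removing the auxiliary subdivisions (which is canonical) yields a $G$-equivariant isometry $T\to T\Phi$, as claimed. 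Two preliminary observations should be recorded: (i) a type IIA fold only enlarges edge and vertex groups, and --- since the deformation space is locally finite, so all edge groups are commensurable --- it does so by a finite index, which is $>1$ precisely when the fold is nontrivial; (ii) because $T\Phi$ is $T$ with the $G$-action precomposed with $\Phi$, the cell stabilizers of $T\Phi$ are the $\Phi^{-1}$-images of those of $T$, and consequently the modular homomorphism satisfies $\mu\circ\Phi=\mu$ (it depends only on the commensurability class of edge stabilizers of $\mathcal{D}$, and $\Phi$ preserves indices).

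Now assume, for contradiction, that some fold in the factorization is nontrivial. On the level of quotient graphs of groups, $G\backslash f$ is volume-preserving and bijective on cells (type IIA folds identify only edges in a common $G$-orbit), hence a graph isomorphism $\alpha$; under $\alpha$ every edge (and vertex) group of $G\backslash(T\Phi)$ contains the corresponding one of $G\backslash T$, with at least one strict inclusion of some index $n>1$. Combining this with observation (ii), $\Phi$ carries the edge group $E$ of the offending edge of $G\backslash T$ into a conjugate of an edge group, enlarging the index by the factor $n$. Since the finite graph $G\backslash T$ has a graph automorphism of finite order, after replacing $\Phi$ by a suitable power we may assume the offending edge is fixed by $\alpha$; iterating then yields, for every $k\geq 1$, a conjugate of $\Phi^k(E)$ sitting inside $E$ with index a positive power of $n$, i.e.\ a strictly descending chain of pairwise commensurable finitely generated subgroups of $G$ with consecutive indices bounded below by $n>1$, all abstractly isomorphic to $E$. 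Reading this chain off from the corresponding loop in the quotient graph produces an element $t\in G$ conjugating an edge group onto a proper subgroup of index a power of $n$, whence $\mu(t)$ is an integer greater than $1$ --- contradicting the hypothesis that $\mathcal{PD}$ has no nontrivial integral modulus. Therefore every fold is trivial and $T$ is $G$-equivariantly isometric to $T\Phi$.

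The main obstacle is the passage, in the second paragraph, from ``some edge group keeps growing by a fixed index $n$ under iteration of $\Phi$'' to an honest element $t\in G$ witnessing $\mu(t)\in\mathbb{Z}$, $\mu(t)>1$: this requires careful bookkeeping of how type IIA folds act on the indices of edge-group inclusions along loops of the quotient graph, and of how the iterated twists by $\Phi$ assemble into a single conjugating element --- essentially the mechanism underlying Levitt's Lemma~\ref{lem: GBS integral modulus} (see \cite[Lemma~2.4]{Le07}), which I would invoke in an adapted form. By contrast, the reduction to type IIA folds, the identity $\mu\circ\Phi=\mu$, and the un-subdivision step at the end are all routine.
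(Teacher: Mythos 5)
Your setup (optimal map of slope $1$, subdivision, factorization into type IIA folds, the identity $\mu\circ\Phi=\mu$, and the un-subdivision at the end) matches the paper's and is fine. But the step you yourself flag as the ``main obstacle'' is a genuine gap, and it cannot be filled the way you propose. Iterating $\Phi$ produces a chain $G_e\lneq\Phi^{-k}(hG_eh^{-1})\lneq\Phi^{-2k}(h'G_eh'^{-1})\lneq\cdots$ whose successive terms are related by the \emph{outer automorphism}, not by conjugation in $G$. The hypothesis ``no nontrivial integral modulus'' constrains only the indices $[H:H\cap gHg^{-1}]$ arising from conjugation by elements $g\in G$; it says nothing a priori about indices of the form $[\Phi^{-k}(hG_eh^{-1}):G_e]$. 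There is no general procedure that converts automorphism-induced index growth into an element $t\in G$ with $tEt^{-1}\lneq E$: Example~\ref{ex: counter elliptic} is exactly a situation where an automorphism strictly enlarges an edge group via a single type IIA fold with no inner automorphism witnessing the growth (that example evades the proposition only because the deformation space fails to be locally finite, but it shows the growth is driven by $\Phi$, not by conjugation). Your appeal to ``the mechanism underlying Lemma~\ref{lem: GBS integral modulus}'' does not help here either: in Levitt's lemma the conjugating element is the stable letter of a $\BS(1,n)$ subgroup, which is read off from the Bass--Serre structure of a single tree, again a statement about elements of $G$ rather than about automorphisms.

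The paper closes this gap by a different device: it never produces an element of integral modulus, but instead defines a numerical invariant. Using Lemma~\ref{lem: maximal elliptic} every edge group $G_e$ lies in a maximal elliptic subgroup, and the no-nontrivial-integral-modulus hypothesis together with \cite[Lemma~8.1]{Fo06} shows that the indices of $G_e$ in the maximal elliptic subgroups containing it are \emph{bounded}; summing the resulting quantities $\mathcal{I}(G_e)$ weighted by edge lengths gives $\mathcal{I}(T)$, which satisfies $\mathcal{I}(T\Phi)=\mathcal{I}(T)$ but strictly decreases under any nontrivial type IIA fold. Your argument could in fact be repaired along these lines --- the ascending chain above consists of elliptic subgroups containing $G_e$, so the boundedness of $\mathcal{I}(G_e)$ contradicts $[\Phi^{-jk}(h_jG_eh_j^{-1}):G_e]\ge n^j\to\infty$ --- but that repair imports precisely the two finiteness lemmas that constitute the substance of the paper's proof, so as written your argument is missing its essential ingredient rather than merely deferring a routine verification.
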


Before we turn to the proof of Proposition~\ref{prop: separation no nontrivial moduli}, we discuss the existence of \emph{maximal elliptic subgroups}, i.e., elliptic subgroups that are not properly contained in any other elliptic subgroup. A maximal elliptic subgroup is always a vertex stabilizer.

\begin{lem}
\label{lem: maximal elliptic}
Let $\mathcal{PD}$ be a projectivized deformation space of locally finite $G$-trees. If $\mathcal{PD}$ has no nontrivial integral modulus then for any $G$-tree $T\in\mathcal{PD}$ and any edge $e\in E(T)$ the edge group $G_e$ is contained in a maximal elliptic subgroup of $T$.
\end{lem}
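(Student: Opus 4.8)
The plan is to show that the increasing union of all elliptic subgroups containing $G_e$ terminates, and that the obstruction to non-termination is precisely a nontrivial integral modulus. Fix a $G$-tree $T\in\mathcal{PD}$ and an edge $e\in E(T)$. Suppose $G_e\leq H_1\leq H_2\leq\cdots$ is a strictly ascending chain of elliptic subgroups of $T$. Each $H_i$ fixes a point of $T$, hence (being a subgroup of a vertex stabilizer, which is locally finite since $\mathcal{PD}$ is locally finite) each $H_i$ is a locally finite group. First I would observe that an ascending chain of distinct elliptic subgroups cannot all be vertex stabilizers of the \emph{same} $G$-tree $T$, since the vertex stabilizers of $T$ containing $G_e$ form an antichain along the segment of vertices adjacent to $e$; so the chain must ``travel'' through the deformation space, i.e., for each $i$ we realize $H_i$ as a vertex stabilizer of some $T_i\in\mathcal{PD}$ obtained from $T$ by an elementary deformation (using \cite[Theorem~3.8]{GL07} and the structure of elementary collapses/expansions, together with the fact from Section~\ref{sec: deformation spaces} that maximal elliptic subgroups are vertex stabilizers).

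The key quantitative step is to track commensurability indices. Since $\mathcal{PD}$ is locally finite, all edge and vertex stabilizers across $\mathcal{D}$ are mutually commensurable, so each inclusion $G_e\leq H_i$ has finite index $[H_i:G_e]=:n_i$, and $n_1 < n_2 < \cdots$ strictly increases without bound. Now I would relate these indices to the modular homomorphism $\mu=\mu(\mathcal{D})$. The point is that if $H_{i+1}$ properly contains $H_i$ and both are vertex stabilizers reached by pulling $H_i$ across an edge (the ``type IIA''-type moves discussed around Figure~\ref{fig: type IIA fold}, though here in the reverse expansion direction), then there is a group element $g$ realizing a conjugation between a finite-index subgroup of $H_i$ inside $H_{i+1}$ and $H_{i+1}$ itself, and the ratio of indices appearing is an integer in $\mathrm{im}(\mu)$. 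More precisely, I expect to extract from the chain an element $g\in G$ and a subgroup $K$ commensurable with $G_e$ with $\mu(g) = [K : (K\cap gKg^{-1})]/[gKg^{-1}:(K\cap gKg^{-1})]$ equal to a nontrivial integer, contradicting $\mathrm{im}(\mu)\cap\mathbb{Z}=\{1\}$. So the chain must stabilize, and its top is a maximal elliptic subgroup — necessarily a vertex stabilizer — containing $G_e$.

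The main obstacle I anticipate is the middle paragraph: making precise \emph{why} an infinite strictly ascending chain of elliptic subgroups forces a nontrivial integral modulus, rather than merely some nontrivial element of $\mathrm{im}(\mu)$. One has to argue that each step up the chain, after passing to a suitable $G$-tree in $\mathcal{D}$ in which $H_i$ is a vertex group, corresponds to an expansion that pulls $H_i$ along an edge whose stabilizer is then $gH_i g^{-1}\cap H_i$ of finite index in both — and that the \emph{integer} $[H_{i+1}: gH_ig^{-1}]$ (or its reciprocal counterpart) lands in the image of $\mu$; one then iterates to get unboundedly large such integers, or a single integer $>1$ whose powers all lie in $\mathrm{im}(\mu)$. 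A clean way to do this is to use that the indices $n_i$ are strictly increasing, so some prime $p$ divides infinitely many ratios $n_{i+1}/n_i$, and to exhibit a single group element $g$ with $\mu(g)$ a nontrivial power of $p$ by composing the relevant conjugations; invoking local finiteness throughout to guarantee all the relevant subgroups are finite and the indices are well-defined. Once the contradiction with ``no nontrivial integral modulus'' is in place, the statement follows immediately.
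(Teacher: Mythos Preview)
Your proposal has a genuine gap and also overcomplicates the argument. First, two errors: ``locally finite'' here refers to the tree having finite valence, not to vertex stabilizers being locally finite groups, so your parenthetical is a misreading; and your claim that the vertex stabilizers of $T$ containing $G_e$ form an antichain is simply false in general---there is no obstruction to having $G_v\subsetneq G_w$ for two vertices $v,w$ both fixed by $G_e$. Because of that incorrect antichain claim, you are led to ``travel through the deformation space'' and realize each $H_i$ as a vertex group of a \emph{different} tree $T_i$, which creates exactly the difficulty you identify: you never pin down a single group element $g$ whose modulus is a nontrivial integer, only a vague hope that composing conjugations and extracting prime factors will work.

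The paper's argument avoids all of this by staying inside the single tree $T$. It first proves the clean auxiliary statement: no vertex group $G_v$ of $T$ is properly contained in a conjugate $gG_vg^{-1}$, because then $\mu(g^{-1})=[gG_vg^{-1}:G_v]$ would be an integer $>1$. Then, if $G_e$ lies in no maximal elliptic subgroup, each adjacent vertex group $G_{\iota(e)},G_{\tau(e)}$ is properly contained in some larger elliptic subgroup, hence in some larger \emph{vertex group of $T$}, and iterating gives an infinite strictly ascending chain of vertex groups of $T$. Cocompactness of $T$ means there are only finitely many conjugacy classes of vertex groups, so by pigeonhole two members of the chain are conjugate, contradicting the auxiliary statement. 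The key idea you are missing is precisely this pigeonhole-over-conjugacy-classes step inside a single tree, which manufactures the element $g$ with integral modulus directly and with no bookkeeping.
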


\begin{proof}
We first observe that, under these assumptions, for any vertex $v\in V(T)$ the vertex group $G_v\leq G$ is not properly contained in a conjugate of itself: Suppose to the contrary that there exists a vertex $v\in V(T)$ such that $G_v$ is a proper subgroup of $gG_vg^{-1}$ for some $g\in G$. We then have $$\mu(g)=\frac{[G_v:(G_v\cap gG_vg^{-1})]}{[gG_vg^{-1}:(G_v\cap gG_vg^{-1})]}=\frac{1}{[gG_vg^{-1}:G_v]}$$ with $[gG_vg^{-1}:G_v]>1$, in which case $\mu(g^{-1})=\frac{1}{\mu(g)}$ is a nontrivial integral modulus, contradicting our assumptions. To prove the lemma, we again argue by contradiction: Suppose that the edge group $G_e$ is not contained in a maximal elliptic subgroup. Each vertex group adjacent to $e$ is then properly contained in another vertex group, which is again properly contained in yet another vertex group. Inductively, we obtain an infinite properly ascending chain of vertex groups that lie in only finitely many conjugacy classes by the cocompactness of $T$. We conclude that there exists a vertex $v\in V(T)$ and a group element $g\in G$ such that $G_v$ is a proper subgroup of $gG_vg^{-1}$, which contradicts the first part of the proof.
\end{proof}

\begin{proof}[Proof of Proposition~\ref{prop: separation no nontrivial moduli}]
Since $T$ is irreducible and has finitely generated edge stabilizers, after subdividing the simplicial structures on $T$ and $T\Phi$ there exists a $G$-equivariant simplicial map $f\colon T\to T\Phi$ that factors as a finite composition of type IIA folds. We claim that $T\Phi$ cannot be obtained from $T$ by nontrivial type IIA folds, whence $T$ and $T\Phi$ are $G$-equivariantly isometric:

By Lemma~\ref{lem: maximal elliptic}, the stabilizer $G_e$ of any edge $e\in E(T)$ is contained in a maximal elliptic subgroup of $T$, which is always a vertex stabilizer. Let $M_i\leq G,\ i\in I$ be the maximal elliptic subgroups of $T$ that contain $G_e$. Since $T$ has only finitely many $G$-orbits of vertices, the vertex groups $M_i,\ i\in I$ fall into only finitely many conjugacy classes, and we assume for a moment that they are in fact all conjugate. Then, for a distinguished maximal elliptic subgroup $M$ containing $G_e$, the image of the modular homomorphism $\mu\colon G\to (\mathbb{Q}_{>0},\times)$ defined by $$\mu(g)=\frac{[M:(M\cap gMg^{-1})]}{[gMg^{-1}:(M\cap gMg^{-1})]}$$ contains the values $$\frac{[M_i:(M\cap M_i)]}{[M:(M\cap M_i)]}=\frac{[M_i:(M\cap M_i)]}{[M:(M\cap M_i)]}\cdot\frac{[(M\cap M_i):G_e]}{[(M\cap M_i):G_e]}=\frac{[M_i:G_e]}{[M:G_e]},\ i\in I.$$ Since $\mathcal{PD}$ has no nontrivial integral modulus, \cite[Lemma~8.1]{Fo06} implies that the indices $[M_i:G_e],\ i\in I$ can take only finitely many values. Consequently, there exists a maximum index $\mathcal{I}(G_e)$ of $G_e$ in the maximal elliptic subgroups $M_i,\ i\in I$. If the maximal elliptic subgroups containing $G_e$ are not all conjugate, we associate to each of their finitely many conjugacy classes the maximum index of $G_e$ and define $\mathcal{I}(G_e)$ as the sum of these. One readily sees that for all $g\in G$ we have $\mathcal{I}(gG_eg^{-1})=\mathcal{I}(G_e)$ so that we have $\mathcal{I}(G_e)=\mathcal{I}(G_{e'})$ if $e$ and $e'$ lie in the same $G$-orbit of edges of $T$. Finally, let $$\mathcal{I}(T)=\sum_{e\in G\backslash T}{\mathrm{length}(e)\cdot\mathcal{I}(G_e)}$$ where $e$ ranges over the finitely many edges in the metric quotient graph of groups of $T$. The value $\mathcal{I}(T)$ is insensitive to simplicial subdivisions of $T$ and for all $\Phi\in \Out_\mathcal{D}(G)$ we have $\mathcal{I}(T\Phi)=\mathcal{I}(T)$. On the other hand, after performing a type IIA fold, for the enlarged edge group $\langle E,g\rangle$ we have $\mathcal{I}(\langle E,g\rangle)<\mathcal{I}(E)$, whereas all other edge groups are left invariant. Thus, if $T'\in\mathcal{PD}$ is obtained from $T$ by a nontrivial sequence of type IIA folds then $\mathcal{I}(T')<\mathcal{I}(T)$, whence the claim.
\end{proof}

\begin{ex}
\label{ex: separation virtually free}
Let $G$ be a finitely generated virtually nonabelian free group and $\mathcal{PD}$ the projectivized deformation space of minimal $G$-trees with finite vertex stabilizers (Example~\ref{ex: virtually free group}). We know from Example~\ref{ex: virtually free} that $\mathcal{PD}$ has no nontrivial integral modulus. Hence, if for $T\in\mathcal{PD}$ and $\Phi\in \Out_\mathcal{D}(G)=\Out(G)$ we have $d_{Lip}(T,T\Phi)=0$ then $T$ and $T\Phi$ are $G$-equivariantly isometric.
\end{ex}

\begin{cor}
Let $G$ be a finitely generated virtually nonabelian free group and $\mathcal{PD}$ the projectivized deformation space of minimal $G$-trees with finite vertex stabilizers. An automorphism $\Phi\in \Out_\mathcal{D}(G)=\Out(G)$ is elliptic with respect to $d_{Lip}$ if and only if it has finite order.
\end{cor}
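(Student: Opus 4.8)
Since $\Phi$ is elliptic precisely when the infimum of $\widetilde{\Phi}$ equals $0$ and is attained, i.e.\ when some $T\in\mathcal{PD}$ satisfies $d_{Lip}(T,T\Phi)=0$, and since by Example~\ref{ex: separation virtually free} the equality $d_{Lip}(T,T\Phi)=0$ holds if and only if $T$ and $T\Phi$ are $G$-equivariantly isometric, the plan is to reformulate the assertion as: \emph{$\Phi\in\Out(G)$ fixes a point of $\mathcal{PD}$ if and only if $\Phi$ has finite order}. This removes all metric content and leaves a fixed-point question for the action of $\Out(G)$ on $\mathcal{PD}$.

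For the forward direction I would show that every point of $\mathcal{PD}$ has finite stabilizer in $\Out(G)$, so that $\Phi$, fixing such a point, lies in a finite subgroup and hence has finite order. A point $T\in\mathcal{PD}$ is a minimal $G$-tree with finite vertex stabilizers and finitely many $G$-orbits of edges, i.e.\ a marking of a \emph{finite} metric graph of groups $\mathbb{G}$ all of whose vertex and edge groups are finite. An automorphism of $G$ fixes the class of $T$ exactly when, up to an inner automorphism, it is realized by an isometric automorphism of $\mathbb{G}$, so $\mathrm{Stab}_{\Out(G)}(T)$ is a subquotient of the automorphism group of $\mathbb{G}$; the latter is assembled from the automorphism group of the finite metric graph underlying $\mathbb{G}$ and the automorphism groups of its finite vertex groups, hence is finite. (Alternatively, one argues on $T$ directly: the centralizer of the $G$-action in $\Isom(T)$ is trivial since the action is irreducible, and the normalizer of $G$ in $\Isom(T)$ acts on the finite graph $G\backslash T$ through a finite quotient with kernel containing $G$ to finite index. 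Note that Proposition~\ref{prop: open cones to open cones} already confines the $\langle\Phi\rangle$-orbit of $T$ to a finite set, but deducing that $\Phi$ has finite order still requires the finiteness of the stabilizer.)

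For the converse, assume $\Phi$ has finite order $n$; I must produce a $G$-tree fixed by $\Phi$. Let $W\trianglelefteq G$ be the maximal finite normal subgroup. A finite normal subgroup has a nonempty $G$-invariant fixed set in any minimal $G$-tree and therefore acts trivially, so the $G$-action on every $G$-tree in $\mathcal{D}$ factors through $\bar{G}=G/W$; moreover $\bar{G}$ is again a finitely generated virtually nonabelian free group, now with trivial center (its center, being finite, would otherwise lift to a finite normal subgroup of $G$ strictly larger than $W$). The image $\bar{\Phi}\in\Out(\bar{G})$ of $\Phi$ still has finite order, and as $Z(\bar{G})=1$ the classical Eilenberg--MacLane obstruction to an extension $1\to\bar{G}\to\tilde{G}\to\langle\bar{\Phi}\rangle\to 1$ inducing the given outer action, which lies in $H^3(\langle\bar{\Phi}\rangle;Z(\bar{G}))$, vanishes. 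The over-group $\tilde{G}$ is then finitely generated and virtually nonabelian free, so it acts minimally and simplicially on a tree $\hat{T}$ with finite vertex stabilizers (as in Example~\ref{ex: virtually free group}); the minimal $\bar{G}$-subtree of $\hat{T}$, regarded as a $G$-tree via $G\twoheadrightarrow\bar{G}$, again has finite vertex stabilizers and so lies in $\mathcal{D}$ by Example~\ref{ex: virtually free group}. Any preimage in $\tilde{G}$ of a generator of $\langle\bar{\Phi}\rangle$ is an isometry of this $G$-tree $T$ realizing $\Phi$; hence $T$ and $T\Phi$ are $G$-equivariantly isometric, $\widetilde{\Phi}$ vanishes at $T$, and $\Phi$ is elliptic. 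I expect this converse to be the crux: the work is in producing the extension --- handled by passing to the centerless quotient $\bar{G}$ --- and in checking that the tree one builds lands in the \emph{right} deformation space, which causes no trouble here precisely because that deformation space consists of \emph{all} minimal $G$-trees with finite vertex stabilizers.
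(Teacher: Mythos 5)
Your proof is correct, and the reduction is the same as the paper's: by Example~\ref{ex: separation virtually free} (the separation property, which rests on the absence of nontrivial integral moduli), ellipticity of $\Phi$ is equivalent to $\Phi$ having a fixed point in $\mathcal{PD}$, so the statement becomes a fixed-point question. For the forward direction the paper simply cites \cite[Proposition 8.6]{GL07} for the finiteness of point stabilizers of the $\Out(G)$-action on $\mathcal{PD}$; your sketch (the stabilizer of a marked finite graph of groups with finite vertex and edge groups is built from the finite graph's automorphisms, the finite vertex groups, and finitely many twists) is a reasonable outline of why that citation holds, and you are right that Proposition~\ref{prop: open cones to open cones} alone would not suffice. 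The genuine divergence is in the converse. The paper invokes Clay's fixed point theorem \cite{Cl07}, which produces a fixed tree for any finite subgroup of $\Out_\mathcal{D}(G)$ acting on a (suitable) deformation space. You instead realize $\Phi$ directly: quotient by the maximal finite normal subgroup to obtain a centerless virtually free group $\bar{G}$, use the vanishing of the Eilenberg--MacLane obstruction in $H^3(\langle\bar{\Phi}\rangle;Z(\bar{G}))=0$ to build the extension $\tilde{G}$, apply \cite[Theorem~7.3]{SW79} to get a $\tilde{G}$-tree with finite stabilizers, and restrict and pull back. All the steps check out (the maximal finite normal subgroup acts trivially on every tree in $\mathcal{D}$, the minimal $\bar{G}$-subtree is $\tilde{G}$-invariant, and the pulled-back $G$-tree has finite vertex stabilizers, hence lies in $\mathcal{D}$). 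What your route buys is a self-contained argument avoiding \cite{Cl07}; what it costs is generality --- it works precisely because this deformation space consists of \emph{all} minimal $G$-trees with finite vertex stabilizers, as you note, whereas Clay's theorem also handles deformation spaces where the constructed tree would have to be checked against a prescribed family of elliptic subgroups.
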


\begin{proof}
It follows from \cite[Proposition 8.6]{GL07} that $\Out(G)$ acts on $\mathcal{PD}$ with finite point stabilizers. Thus, and by Example~\ref{ex: separation virtually free}, every elliptic automorphism $\Phi\in \Out(G)$ has finite order. Conversely, by \cite{Cl07} every finite-order automorphism of $G$ has a fixed point in $\mathcal{PD}$ and thus is elliptic.
\end{proof}

\begin{ex}
\label{ex: separation GBS}
Let $G$ be a nonelementary GBS group that contains no solvable Baumslag-Solitar group $\BS(1,n)$ with $n\geq 2$. Let $\mathcal{PD}$ be the projectivized deformation space of minimal $G$-trees with infinite cyclic vertex and edge stabilizers (Example~\ref{ex: nonelementary GBS groups}). By Lemma~\ref{lem: GBS integral modulus}, $\mathcal{PD}$ has no nontrivial integral modulus. Thus, if for $T\in\mathcal{PD}$ and $\Phi\in \Out_\mathcal{D}(G)=\Out(G)$ we have $d_{Lip}(T,T\Phi)=0$ then $T$ and $T\Phi$ are $G$-equivariantly isometric.
\end{ex}

\begin{rem}
For $G$ and $\mathcal{PD}$ as in Example~\ref{ex: separation GBS}, let $b$ be the first Betti number of the topological space $G\backslash T$ for any $T\in\mathcal{PD}$; this number is an invariant of $\mathcal{PD}$, as it is not affected by elementary deformations. The stabilizer of each $T\in\mathcal{PD}$ under the action of $\Out(G)$ on $\mathcal{PD}$ is virtually isomorphic to $\mathbb{Z}^k$, where $k=b$ or $b-1$ depending on $G$ \cite[Theorem 3.10]{Le07}.
\end{rem}

However, the asymmetric Lipschitz pseudometric $d_{Lip}$ does not restrict to an asymmetric metric on $\Out_\mathcal{D}(G)$-orbits in general:

\begin{ex}[Horbez]
\label{ex: counter elliptic}
Let $G=\BS(1,6)\ast F_2=\langle x,t\ |\ txt^{-1}=x^6\rangle\ast F_2$ and
consider the graph of groups decompositions $\Gamma$ and $\Gamma'$ of $G$ shown in Figure~\ref{fig: orbit counterexample}, where all edge group inclusions are the obvious ones and all edges have length $\frac{1}{3}$.
\begin{figure}[h!]
$$\begin{tikzpicture}
\draw [-] (1,.35) circle (10pt);
\draw [-] (7,.35) circle (10pt);
\draw [-] (1,-2.35) circle (10pt);
\draw [-] (7,-2.35) circle (10pt);

\node [left] at (.5,0) {$\Gamma$};
\node [left] at (6.5,0) {$\Gamma'$};

\node [above] at (1,.7) {1};
\node [right] at (1,-.2) {$\langle x,t\ |\ txt^{-1}=x^6\rangle$};
\node [right] at (1,-1) {$\langle x^3\rangle$};
\node [right] at (1,-1.8) {$\langle x^3\rangle$};
\node [below] at (1,-2.7) {1};

\node [above] at (7,.7) {1};
\node [right] at (7,-.2) {$\langle x,t\ |\ txt^{-1}=x^6\rangle$};
\node [right] at (7,-1) {$\langle x\rangle$};
\node [right] at (7,-1.8) {$\langle x\rangle$};
\node [below] at (7,-2.7) {1};

\draw [-] (1,0) -- (1,-2);
\draw [-] (7,0) -- (7,-2);

\draw [fill] (1,0) circle [radius=.05];
\draw [fill] (1,-2) circle [radius=.05];
\draw [fill] (7,-2) circle [radius=.05];
\draw [fill] (7,0) circle [radius=.05];
\end{tikzpicture}$$
\caption{The Bass-Serre trees of the graphs of groups shown above lie in the same $\Out_\mathcal{D}(G)$-orbit. They are irreducible but not locally finite.}
\label{fig: orbit counterexample}
\end{figure}
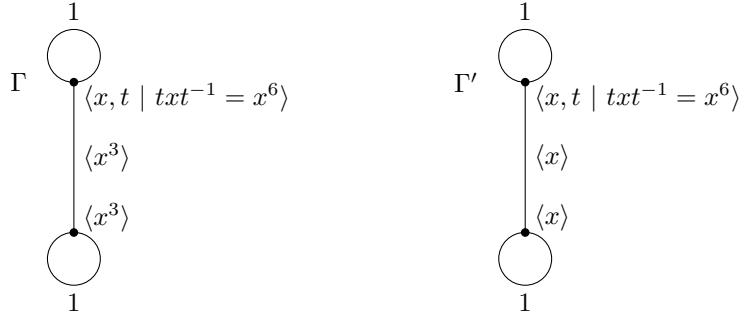
Let $T$ and $T'$ be the corresponding $G$-trees. The automorphism $$\varphi\colon \BS(1,6)\stackrel{\cong}{\to} \BS(1,6),\ x\mapsto x^3,\ t\mapsto t$$ induces an automorphism $\Phi=\varphi\ast id_{F_2}\in \Aut(G)$ and we have $T'=T\Phi$. Similarly as in Example~\ref{ex: not an asymmetric metric}, the natural morphism of graphs of groups from $\Gamma$ to $\Gamma'$ lifts to a $G$-equivariant map from $T$ to $T\Phi$ (namely, a type IIA fold) that is an isometry on edges and thus has Lipschitz constant 1, whence $d_{Lip}(T,T\Phi)=0$. However, $T$ and $T\Phi$ are not $G$-equivariantly isometric, as the group element $x\in G$ stabilizes an edge in $T\Phi$ but not in $T$ ($x$ is not a conjugate of $x^3$).
\end{ex}

\subsection{Nonparabolic automorphisms}
\label{sec: nonparabolics}

Let $\mathcal{PD}$ be a projectivized deformation space of irreducible $G$-trees and $\Phi\in \Out_\mathcal{D}(G)$ a nonparabolic automorphism, i.e., $\inf\widetilde{\Phi}$ is realized. Let $T\in\mathcal{PD}$ such that
$d_{Lip}(T,T\Phi)=\inf\widetilde{\Phi}$ and let $f\colon T\to T\Phi$ be an optimal map with tension forest $\Delta=\Delta(f)\subset T$. The following observation will be used in the proof of Theorem~\ref{thm: train tracks}:

\begin{prop}
\label{prop: nonparabolic tension forest invariant}
After a small perturbation of the metric on $T$, preserving the condition that $d_{Lip}(T,T\Phi)=\inf\widetilde{\Phi}$, the map $f\colon T\to T\Phi$ is $G$-equivariantly homotopic to an optimal map $f'\colon T\to T\Phi$ with $\Delta(f')\subseteq\Delta$ such that $$f'\left(\Delta(f')\right)\subseteq\Delta(f').$$
\end{prop}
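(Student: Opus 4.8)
The plan is to run an alternating procedure: (i) perturb the metric on $T$ so as to replace the tension forest by its ``$f$-invariant core'', and (ii) restore optimality via Proposition~\ref{prop: existence of optimal maps}; termination will follow because each nontrivial step strictly decreases the number of $G$-orbits of edges contained in the tension forest, which is always a positive integer.

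The core construction is the perturbation in step (i). Suppose $g\colon T\to T\Phi$ is $G$-equivariant, linear on edges, realizes $\sigma(T,T\Phi)=\lambda=\exp(\inf\widetilde{\Phi})$, and has tension forest $\Delta_0=\Delta(g)$ with $g(\Delta_0)\not\subseteq\Delta_0$. For small $t>0$ let $T^{(t)}$ be obtained from $T$ by $G$-equivariantly multiplying the length of every edge outside $\Delta_0$ by $1-t$ and redistributing the freed-up length proportionally among the edges of $\Delta_0$ so that the covolume stays $1$; this does not leave the open cone, so $T^{(t)}\in\mathcal{PD}$, and $g$ (with the same combinatorics, linear on edges for the new metric) is still a $G$-equivariant map $T^{(t)}\to T^{(t)}\Phi$. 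A direct slope computation shows that for the new metric: an edge of $\Delta_0$ whose $g$-image lies in $\Delta_0$ keeps slope exactly $\lambda$ (this is where proportional redistribution is used); an edge of $\Delta_0$ whose $g$-image is not contained in $\Delta_0$ acquires slope strictly below $\lambda$; and, $t$ being small, every edge outside $\Delta_0$ (finitely many $G$-orbits, all with slope $<\lambda$ beforehand) keeps slope $<\lambda$. Thus $\Delta_1:=\{e\in\Delta_0 : g(e)\subseteq\Delta_0\}$ is the candidate new tension forest, and it is nonempty: otherwise all slopes would be $<\lambda$, giving $\widetilde{\Phi}(T^{(t)})<\log\lambda$, contradicting that $T$ realizes $\inf\widetilde{\Phi}$. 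Hence $\sigma(g)=\lambda$ for the new metric, $\Delta(g)=\Delta_1\subsetneq\Delta_0$, the tree $T^{(t)}$ again realizes $\inf\widetilde{\Phi}$ (it cannot do better, $T$ being a minimizer), and $g$ realizes $\sigma(T^{(t)},T^{(t)}\Phi)$. Repeating with $\Delta_1$ in place of $\Delta_0$ yields a decreasing chain of $G$-invariant subforests, which stabilizes by cocompactness of $T$ at some $\Delta_\infty\subseteq\Delta_0$ with $g(\Delta_\infty)\subseteq\Delta_\infty$; the cumulative metric change is a composition of finitely many small perturbations, hence small.

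Now I iterate, starting from the given optimal $f$ and $\Delta=\Delta(f)$. At a general stage I have a tree in $\mathcal{PD}$, obtained from $T$ by a small metric perturbation and still realizing $\inf\widetilde{\Phi}$, together with a map $f$, linear on edges, realizing $\sigma$, with tension forest $\Delta_{\mathrm{cur}}\subseteq\Delta$. If $f$ is not optimal, I replace it by the optimal map from Proposition~\ref{prop: existence of optimal maps}, whose tension forest is a proper subforest of $\Delta_{\mathrm{cur}}$. Otherwise, if $f(\Delta_{\mathrm{cur}})\not\subseteq\Delta_{\mathrm{cur}}$, I apply the perturbation above, obtaining a slightly perturbed metric for which $\Delta(f)$ is a proper, $f$-invariant subforest of $\Delta_{\mathrm{cur}}$ (though $f$ may now fail to be optimal). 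Otherwise $f$ is optimal and $f(\Delta_{\mathrm{cur}})\subseteq\Delta_{\mathrm{cur}}$, and I stop with $f'=f$. Each non-terminal step strictly decreases the positive integer given by the number of $G$-orbits of edges contained in $\Delta_{\mathrm{cur}}$, so the procedure terminates, producing an optimal $f'$ with $\Delta(f')\subseteq\Delta$ and $f'(\Delta(f'))\subseteq\Delta(f')$, reached from $T$ by a small metric perturbation preserving $d_{Lip}(T,T\Phi)=\inf\widetilde{\Phi}$.

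I expect the main obstacle to be exactly the tension between the two operations: shrinking the tension forest to attain optimality can violate invariance, and the metric perturbation enforcing invariance can violate optimality, so a priori the procedure need not terminate. This is resolved by the monotonicity above — every nontrivial step removes at least one $G$-orbit of edges, and the tension forest of a distance-realizing map is never empty. A secondary technical point is the slope bookkeeping in the perturbation, in particular verifying that proportional redistribution keeps the surviving maximal-slope edges exactly at slope $\lambda$ rather than nudging them below it.
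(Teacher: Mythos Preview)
Your argument is correct and follows essentially the same approach as the paper: scale edges of $\Delta$ up and edges of $T\setminus\Delta$ down (keeping covolume $1$) so that the tension forest shrinks to its $f$-invariant core, then restore optimality via Proposition~\ref{prop: existence of optimal maps}. Your version is in fact slightly more careful than the paper's: you explicitly alternate the two operations and note that each nontrivial step strictly removes a $G$-orbit of edges from the tension forest, whereas the paper applies the optimality perturbation only once at the end without checking that it does not spoil the invariance $f(\Delta)\subseteq\Delta$ just achieved.
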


\begin{proof}
Suppose that $f(\Delta)$ is not contained in $\Delta$ and let $e\in E(\Delta)$ be an edge such that $f(e)\nsubseteq\Delta$. Slightly scale up the metric on $\Delta$ and down on $T\setminus\Delta$ while maintaining covolume 1. This lowers the stretching factor on $e$ and produces a new tension forest, of the original map $f$ made linear on edges, that is properly contained in the old one. Since $d_{Lip}(T,T\Phi)$ is minimal among all translation distances of $\Phi$, we will not have removed all edges of $\Delta$ and started over with a new tension forest that corresponds to a strictly smaller maximal stretching factor. In particular, there always exists an edge $e'\in E(\Delta)$ such that $f(e')\subseteq\Delta$. The stretching factor of $f$ on $e'$ remains unchanged and we preserve the condition that $d_{Lip}(T,T\Phi)=\inf\widetilde{\Phi}$. As $T$ has only finitely many $G$-orbits of edges, after finitely many repetitions we have $f(\Delta)\subseteq\Delta$. If at this point $\Delta$ has a vertex with only one gate, we perturb $f$ to an optimal map $f'$ as in the proof of Proposition~\ref{prop: existence of optimal maps}.
\end{proof}

\subsection{Parabolic automorphisms}
\label{sec: parabolics}

Let $\mathcal{PD}$ be a projectivized deformation space of $G$-trees and $T\in\mathcal{PD}$. We say that a $G$-invariant subforest $S\subseteq T$ is \emph{essential} if it contains the hyperbolic axis of some hyperbolic group element. The notion of essential $G$-invariant subforests generalizes the notion of homotopically nontrivial subgraphs of marked metric graphs in Outer space.

\begin{de}
\label{de: irreducible automorphism}
An automorphism $\Phi\in \Out_\mathcal{D}(G)$ is \emph{reducible} if there exists a $G$-tree $T\in\mathcal{PD}$ and a $G$-equivariant map $f\colon T\to T\Phi$ that leaves an essential proper $G$-invariant subforest of $T$ invariant. If $\Phi$ is not reducible, it is \emph{irreducible}.
\end{de}

As we will see, parabolic automorphisms are often reducible (Corollary~\ref{cor: parabolics reducible}). For this, let $\Phi\in \Out_\mathcal{D}(G)$ be a parabolic automorphism (i.e., $\inf\widetilde{\Phi}$ is not realized) and $(T_k)_{k\in\mathbb{N}}$ a sequence of $G$-trees in $\mathcal{PD}$ such that $\lim_{k\to\infty} d_{Lip}(T_k,T_k\Phi)=\inf\widetilde{\Phi}$. For $\Theta>0$ we denote by $\mathcal{PD}(\Theta)$ the $\Out_\mathcal{D}(G)$-invariant subspace of $\mathcal{PD}$ consisting of all $G$-trees $T\in\mathcal{PD}$ that satisfy $l_T(g)\geq\Theta$ for all hyperbolic group elements $g\in G$. We call $\mathcal{PD}(\Theta)$ the \emph{$\Theta$-thick part} of $\mathcal{PD}$.

\begin{prop}
\label{prop: parabolics leave thick part}
If the projectivized deformation space $\mathcal{PD}$ consists of irreducible $G$-trees and $\Out_\mathcal{D}(G)$ acts on $\mathcal{PD}$ with finitely many orbits of simplices then for only finitely many $k\in\mathbb{N}$ we have $T_k \in\mathcal{PD}(\Theta)$.
\end{prop}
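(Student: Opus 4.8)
The plan is to argue by contradiction. Write $\lambda=\inf\widetilde\Phi$ and suppose that $T_k\in\mathcal{PD}(\Theta)$ for infinitely many $k$; after passing to that subsequence we may assume $T_k\in\mathcal{PD}(\Theta)$ for all $k$, with $d_{Lip}(T_k,T_k\Phi)\to\lambda$.

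The first and main step is to show that $\Out_\mathcal{D}(G)$ acts \emph{cocompactly} on $\mathcal{PD}(\Theta)$. The subspace $\mathcal{PD}(\Theta)$ is $\Out_\mathcal{D}(G)$-invariant and closed (each $l_T(g)$ is continuous in $T$), and by hypothesis there are only finitely many orbits of simplices; so it suffices to show that for every open simplex $\sigma$ the closure in $\mathcal{PD}$ of $\sigma\cap\mathcal{PD}(\Theta)$ is compact, and then take for $K$ the finite union of such closures over a set of orbit representatives. A point of $\sigma$ is a choice of positive lengths for the finitely many $G$-orbits of edges of the underlying nonmetric tree, with total quotient volume $1$; in particular every edge length is $\le 1$, so by Bolzano--Weierstrass any sequence in $\sigma\cap\mathcal{PD}(\Theta)$ subconverges in edge-length coordinates, and the only thing to rule out is that along the sequence some edge-orbit length tends to $0$ in a way that does not correspond to an elementary collapse. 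If that edge-orbit is collapsible the limit is an elementary collapse and stays in $\mathcal{PD}(\Theta)$ (a closed condition), so assume it is not collapsible and its two endpoints lie in distinct $G$-orbits: the endpoint groups $A,B$ then properly contain the edge group $C$, and picking $a\in A\setminus C$, $b\in B\setminus C$, the fixed-point sets of the elliptic elements $a,b$ are disjoint and at distance equal to the length of that edge, so by Proposition~\ref{prop: translation formulas}(\ref{it: elliptics}) the hyperbolic element $ab$ satisfies $l_T(ab)=2\cdot(\text{edge length})\to 0$, contradicting $T\in\mathcal{PD}(\Theta)$; the remaining case, where the two endpoints lie in one $G$-orbit, is similar (collapsing lowers the rank of the quotient graph, and the shrinking edge lies on the axis of a hyperbolic element whose translation length tends to $0$). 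This proves cocompactness: there is a compact $K\subseteq\mathcal{PD}(\Theta)$ with $\Out_\mathcal{D}(G)\cdot K=\mathcal{PD}(\Theta)$.

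Now pick $\psi_k\in\Out_\mathcal{D}(G)$ with $S_k:=T_k\psi_k\in K$, and set $\Phi_k:=\psi_k^{-1}\Phi\psi_k$. Because $\Out_\mathcal{D}(G)$ acts on $(\mathcal{PD},d_{Lip})$ by isometries and $S_k\Phi_k=(T_k\Phi)\psi_k$,
$$\widetilde{\Phi_k}(S_k)=d_{Lip}\big(T_k\psi_k,(T_k\Phi)\psi_k\big)=d_{Lip}(T_k,T_k\Phi)=\widetilde\Phi(T_k)\longrightarrow\lambda,$$
and the same identity shows $\widetilde{\Phi_k}=\widetilde\Phi\circ(\,\cdot\,\psi_k^{-1})$, so $\inf\widetilde{\Phi_k}=\lambda$ and this infimum is not attained. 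Since $\mathcal{PD}(\Theta)$ is $\Out_\mathcal{D}(G)$-invariant we also have $S_k\Phi_k\in\mathcal{PD}(\Theta)$, and $d_{Lip}(S_k,S_k\Phi_k)$ is bounded, so the $S_k\Phi_k$ lie in a forward $d_{Lip}$-ball of bounded radius around the compact set $K$, intersected with $\mathcal{PD}(\Theta)$. This set has compact closure in $\mathcal{PD}$: $d_{Lip}(S,S')\le R$ forces $l_{S'}(g)\le e^R l_S(g)$ for all $g$, so the functions $l_{S_k\Phi_k}$ are bounded above pointwise and, being in the thick part, bounded below by $\Theta$ on hyperbolic elements; together with the covolume bound on edge lengths and the bounded complexity of the trees this confines them to a compact part of $\mathcal{PD}$ by the same ``no short translation length, no essential degeneration'' argument as above.

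Passing to a subsequence, we may thus assume that $S_k$ and $S_k\Phi_k$ lie in fixed open simplices $\tau$ and $\tau'$ and converge weakly in $\mathcal{PD}$, say $S_k\to S_\infty$ and $S_k\Phi_k\to S_\infty'$. Then $\Phi_k(\tau)=\tau'$ for every $k$, and by Proposition~\ref{prop: open cones to open cones} the homeomorphism $\tau\to\tau'$ induced by $\Phi_k$ is prescribed by a bijection of the two finite sets of edge orbits; there are only finitely many such bijections, so after a further subsequence all $\Phi_k$ induce the same homeomorphism, namely the restriction to $\tau$ of $T\mapsto T\Phi_1$. Hence $S_k\Phi_k=S_k\Phi_1$ for all $k$, so $S_\infty'=S_\infty\Phi_1$ by continuity of the $\Out_\mathcal{D}(G)$-action, and using joint $d_{Lip}$-continuity under weak convergence (Proposition~\ref{prop: convergent sequence} and the triangle inequality),
$$\widetilde{\Phi_1}(S_\infty)=d_{Lip}(S_\infty,S_\infty\Phi_1)=d_{Lip}(S_\infty,S_\infty')=\lim_{k\to\infty}d_{Lip}(S_k,S_k\Phi_k)=\lambda=\inf\widetilde{\Phi_1}.$$
Thus $\widetilde{\Phi_1}$ attains its infimum; since $\widetilde{\Phi_1}=\widetilde\Phi\circ(\,\cdot\,\psi_1^{-1})$, so does $\widetilde\Phi$ (at $S_\infty\psi_1^{-1}$), contradicting the hypothesis that $\Phi$ is parabolic. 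The step I expect to be the real obstacle is the cocompactness of $\mathcal{PD}(\Theta)$ together with the companion precompactness of forward $d_{Lip}$-balls inside the thick part --- that is, making precise that a tree in the thick part cannot approach a missing face of $\mathcal{PD}$; once these finiteness and properness facts are in hand, the remainder is the soft limiting argument above.
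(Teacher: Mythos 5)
Your first half matches the paper: argue by contradiction, use cocompactness of the action of $\Out_\mathcal{D}(G)$ on the thick part $\mathcal{PD}(\Theta)$ (which the paper asserts rather than proves; your single-simplex precompactness sketch is essentially the right justification) to normalize $S_k=T_k\psi_k$ into a compact set, and observe that $d_{Lip}(S_k,S_k\Phi_k)\to\inf\widetilde{\Phi}$ for the conjugates $\Phi_k=\psi_k^{-1}\Phi\psi_k$. From there you diverge, and the divergent part has a genuine gap. Your argument needs the targets $S_k\Phi_k$ to lie, after a subsequence, in a single fixed open simplex $\tau'$; only then can you invoke Proposition~\ref{prop: open cones to open cones} and the finiteness of bijections between edge orbits to force $\Phi_k|_\tau=\Phi_1|_\tau$. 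You justify this by claiming that a forward $d_{Lip}$-ball intersected with $\mathcal{PD}(\Theta)$ is precompact ``by the same argument as above.'' It is not the same argument: the within-one-simplex argument controls degeneration of edge lengths inside a cone with a fixed finite set of edge orbits, whereas here you must rule out that the trees $S_k\Phi_k$ wander through infinitely many distinct open simplices. Pointwise boundedness of $l_{S_k\Phi_k}$ (which, note, you only get from $d_{Lip}(S_\infty,S_k\Phi_k)\leq R$, since the reverse distance $d_{Lip}(S_k\Phi_k,S_k)$ is not controlled) plus thickness plus covolume $1$ does not obviously confine a sequence to finitely many simplices; this is a properness-type statement about the asymmetric metric that requires its own proof, and it is especially delicate here because the proposition does not assume local finiteness, so $\mathcal{PD}$ need not even be a locally finite complex and point stabilizers in $\Out_\mathcal{D}(G)$ can be infinite. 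As written, the step ``passing to a subsequence, $S_k\Phi_k$ lie in a fixed open simplex $\tau'$'' is unsupported, and everything after it depends on it.

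The paper's proof is engineered precisely to avoid taking any limit of the moving targets. Having normalized so that $S_k\to T$ weakly and $d_{Lip}(T,T\Phi_k)\to\inf\widetilde{\Phi}$, it applies Theorem~\ref{thm: inf = sup} to the \emph{fixed} source $T$: each stretching factor $\sigma(T,T\Phi_k)$ equals $l_T\bigl(\Phi_k(\xi_k)\bigr)/l_T(\xi_k)$ for a candidate $\xi_k\in\mathrm{cand}(T)$, and since $l_T$ has discrete image while candidates of $T$ realize only finitely many translation lengths, these ratios form a discrete subset of $\mathbb{R}$. A convergent sequence in a discrete set is eventually equal to its limit, so $d_{Lip}(T\psi_k^{-1},T\psi_k^{-1}\Phi)=\inf\widetilde{\Phi}$ for large $k$, contradicting parabolicity with no compactness of outgoing balls needed. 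If you want to rescue your approach, you would need to either prove the finiteness-of-simplices statement for the set $\{S_k\Phi_k\}$ or replace your limiting step with this discreteness argument; your endgame (the joint $d_{Lip}$-continuity computation via Proposition~\ref{prop: convergent sequence} and the transfer from $\widetilde{\Phi_1}$ back to $\widetilde{\Phi}$) is fine once that is done.
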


\begin{proof} We will argue as in the proof of \cite[Claim 72]{Me11}. Suppose that the proposition is false and that, after passing to a subsequence, we have $T_k\in\mathcal{PD}(\Theta)$ for all $k\in\mathbb{N}$. We will lead this to a contradiction.

Since $\Out_\mathcal{D}(G)$ acts on $\mathcal{PD}$ with finitely many orbit of simplices, it acts on the thick part $\mathcal{PD}(\Theta)$ cocompactly in all three topologies. In particular, the image of $(T_k)_{k\in\mathbb{N}}$ in the quotient $\mathcal{PD}(\Theta)/\Out_{\mathcal{D}}(G)$ has a weakly convergent subsequence. We can thus find a sequence of outer automorphisms $(\psi_k)_{k\in\mathbb{N}}\subset \Out_{\mathcal{D}}(G)$ such that, after passing to a subsequence, $(T_k\psi_k)_{k\in\mathbb{N}}$ weakly converges in $\mathcal{PD}(\Theta)$ to some $T\in\mathcal{PD}(\Theta)$. We have
\begin{align*}
d_{Lip}(T\psi_k^{-1},T\psi_k^{-1}\Phi) &\leq d_{Lip}(T\psi_k^{-1},T_k) + d_{Lip}(T_k,T_k\Phi) + d_{Lip}(T_k\Phi,T\psi_k^{-1}\Phi)\\
&= d_{Lip}(T,T_k\psi_k) + d_{Lip}(T_k,T_k\Phi) + d_{Lip}(T_k\psi_k,T)
\end{align*}
where $\lim_{k\to\infty} d_{Lip}(T,T_k\psi_k)=\lim_{k\to\infty} d_{Lip}(T_k\psi_k,T)=0$ by Proposition~\ref{prop: convergent sequence}. Hence, $\lim_{k\to\infty} d_{Lip}(T,T\psi_k^{-1}\Phi\psi_k)=\lim_{k\to\infty} d_{Lip}(T_k,T_k\Phi)=\inf \widetilde{\Phi}.$

By Theorem~\ref{thm: inf = sup}, for all $k\in\mathbb{N}$ there exists a candidate $\xi_k\in \mathrm{cand}(T)$ such that
\begin{equation*}
\sigma(T,T\psi_k^{-1}\Phi\psi_k)=\frac{l_{T\psi_k^{-1}\Phi\psi_k}(\xi_k)}{l_T(\xi_k)}=\frac{l_T(\psi_k^{-1}\Phi\psi_k(\xi_k))}{l_T(\xi_k)}.
\end{equation*}
The translation length function of $T$ has discrete image in $\mathbb{R}$ and hence the numerator takes discrete values. Since the candidates of $T$ have only finitely many different translation lengths, the denominator takes only finitely many values and we conclude that the sequence $\left(\sigma(T,T\psi_k^{-1}\Phi\psi_k)\right)_{k\in\mathbb{N}}$ is discrete. For large $k$ we thus have $$d_{Lip}(T\psi_k^{-1},T\psi_k^{-1}\Phi)=d_{Lip}(T,T\psi_k^{-1}\Phi\psi_k)=\inf \widetilde{\Phi}$$ contradicting the assumption that $\Phi$ is parabolic.
\end{proof}

\begin{cor}
\label{cor: parabolics reducible}
Under the assumptions of Proposition~\ref{prop: parabolics leave thick part}, for large $k$ any optimal map $f\colon T_k\to T_k\Phi$ leaves an essential proper $G$-invariant subforest of $T_k$ invariant up to $G$-equivariant homotopy. In particular, every parabolic automorphism $\Phi\in \Out_\mathcal{D}(G)$ is reducible.
\end{cor}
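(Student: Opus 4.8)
The plan is to exhibit, for large $k$, an explicit essential proper $G$-invariant subforest of $T_k$ --- the ``thin part'' --- and to show that $\Phi$ preserves it. First I would record two reductions. Since $d_{Lip}(T_k,T_k\Phi)\to\inf\widetilde\Phi$, the stretching factors $C_k=\exp(d_{Lip}(T_k,T_k\Phi))$ are bounded, say $C_k\le M$; and Proposition~\ref{prop: parabolics leave thick part} says precisely that the systoles $\mathrm{sys}(T_k)=\inf\{l_{T_k}(g)\mid g\in G\text{ hyperbolic}\}$ tend to $0$, since otherwise $T_k$ would remain in some thick part $\mathcal{PD}(\Theta)$. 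Secondly, the set of $G$-equivariant maps $T_k\to T_k\Phi$ is path-connected: the straight-line homotopy $H(x,t)=$ the point at fraction $t$ along the segment $[f(x),f'(x)]$ of the $\mathbb{R}$-tree $T_k\Phi$ is $G$-equivariant whenever $f$ and $f'$ are. Hence any two such maps are $G$-equivariantly homotopic, and the assertion that every optimal map $f\colon T_k\to T_k\Phi$ leaves a subforest $S$ invariant up to $G$-equivariant homotopy reduces to the existence of one essential proper $G$-invariant subforest $S\subseteq T_k$ and one $G$-equivariant map $T_k\to T_k\Phi$ carrying $S$ into $S$.

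Next I would build the subforest. Passing to a subsequence, assume all $T_k$ share a simplicial type with $N$ orbits of edges, and all $T_k\Phi$ share a simplicial type with $N'$ orbits of edges. Fix $\epsilon>0$ with $\epsilon<\min\{1/N,\,1/(MN')\}$ and let $S_k\subseteq T_k$ be the subforest spanned by the edges of $T_k$ of length $<\epsilon$; after a further subsequence $S_k$ has constant combinatorial type. It is a $G$-invariant subforest, and for large $k$ it is \emph{essential} --- because $\mathrm{sys}(T_k)<\epsilon$ yields a hyperbolic $h$ with $l_{T_k}(h)<\epsilon$, whose axis projects to a loop of total length $<\epsilon$ and hence consists of edges of length $<\epsilon$, so lies in $S_k$ --- and \emph{proper} --- because $\mathrm{covol}(T_k)=1$ with $N$ edge orbits forces an edge of length $\ge 1/N>\epsilon$, and by minimality of $T_k$ (which makes $T_k$ the union of all hyperbolic axes) that edge lies on a hyperbolic axis of translation length $\ge 1/N$, which is not contained in $S_k$.

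It then remains, and this is the main obstacle, to show that $\Phi$ preserves the subgroup system carried by $S_k$, i.e.\ that some $G$-equivariant map $T_k\to T_k\Phi$ sends $S_k$ into $S_k$; together with the first reduction and Definition~\ref{de: irreducible automorphism} this proves the corollary. Here I would invoke an optimal map $f_k\colon T_k\to T_k\Phi$ (which exists by Theorem~\ref{thm: irreducible infimum realized} and Proposition~\ref{prop: existence of optimal maps}), $C_k$-Lipschitz with $C_k\le M$. If $l_{T_k}(g)<\epsilon$ then $l_{T_k\Phi}(g)\le C_k\,l_{T_k}(g)<M\epsilon<1/N'$, so $g$ is short in $T_k\Phi$ as well; since an immersed loop of length $<1/N'$ cannot traverse a full edge of $T_k\Phi$, the $T_k\Phi$-axes of such elements lie in the thin subforest $S_k'$ of $T_k\Phi$. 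The delicate step is to turn this length estimate into a combinatorial identification: using that the types of $T_k$, of $T_k\Phi$, of $S_k\subseteq T_k$ and of $S_k'\subseteq T_k\Phi$ are all fixed along the subsequence and that $f_k$ is surjective with bounded stretch, one matches the conjugacy classes carried by $S_k'$ with the $\Phi$-images of those carried by $S_k$, so that $\Phi$ permutes --- hence preserves --- this finite system of conjugacy classes. Collapsing $S_k$ then realizes $\Phi$ as leaving the corresponding splitting invariant, which produces the desired $G$-equivariant map and, by Definition~\ref{de: irreducible automorphism}, the reducibility of $\Phi$. This last paragraph is where the hypothesis that $\Out_\mathcal{D}(G)$ acts with finitely many orbits of simplices is used a second time, beyond its use in Proposition~\ref{prop: parabolics leave thick part}; it is exactly the argument of \cite[Claim~72]{Me11}, and everything else is bookkeeping.
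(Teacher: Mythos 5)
There is a genuine gap at the step you yourself flag as ``the main obstacle,'' and the sketch you give for closing it would not work as stated. Your length estimate shows that if $l_{T_k}(h)<\epsilon$ then $l_{T_k\Phi}(h)<M\epsilon$, so the optimal map carries the $\epsilon$-thin part of $T_k$ into the \emph{$M\epsilon$-thin part} of $T_k\Phi$ --- a strictly larger subforest, not $S_k$ itself. Nothing in the proposal bridges this scale discrepancy: $\Phi$ need not permute the conjugacy classes of $\epsilon$-short elements (an $\epsilon$-short element can have image of length anywhere up to $M\epsilon$, and there is no lower bound available since $\sigma(T_k\Phi,T_k)$ is not controlled by the hypotheses), so the asserted ``matching of conjugacy classes carried by $S_k'$ with the $\Phi$-images of those carried by $S_k$'' is exactly the claim to be proved, not a consequence of the bookkeeping. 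The paper's proof overcomes precisely this point by considering a \emph{chain} of $d+2$ nested thin parts $T_k^{\delta_0}\supseteq\cdots\supseteq T_k^{\delta_{d+1}}$ at geometrically decreasing scales $\delta_i=\varepsilon e^{-(D+1)i}$, where $d+1$ bounds the number of $G$-orbits of edges (this is where the finitely-many-orbits-of-simplices hypothesis enters a second time), and applying a pigeonhole argument to their cores: some consecutive pair satisfies $\mathrm{core}(T_k^{\delta_{i+1}})=\mathrm{core}(T_k^{\delta_i})$, and since $f$ is $e^{D+1}$-Lipschitz it maps $T_k^{\delta_{i+1}}$ into $T_k^{\delta_i}$, so composing with the deformation retraction onto the common core yields the invariant subforest. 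This mechanism is entirely absent from your argument.

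Two further, smaller problems. First, your subforest $S_k$ is the union of \emph{short edges} rather than the union of (neighborhoods of) axes of short elements; the image under $f_k$ of a short edge is a path of length $<M\epsilon$ that may enter long edges of $T_k\Phi$ partially, so even the containment $f_k(S_k)\subseteq(M\epsilon\text{-thin part})$ is only justified for the sub-part of $S_k$ covered by short axes, not for all of $S_k$. Second, the concluding move ``collapsing $S_k$ then realizes $\Phi$ as leaving the corresponding splitting invariant'' is off-track: $S_k$ is essential, so collapsing it creates new elliptic subgroups and leaves the deformation space $\mathcal{D}$, whereas Definition~\ref{de: irreducible automorphism} requires a map $T\to T\Phi$ with $T\in\mathcal{PD}$ preserving the subforest. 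Your opening reductions (bounded stretch, systole tending to $0$, homotopy-independence via the straight-line homotopy, essentiality and properness of the thin part) are all fine and agree with the paper; it is the core invariance argument that is missing.
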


If $T$ is a minimal $G$-tree then a subforest $S\subseteq T$ with no trivial components is a \emph{core subforest} if it does not have any vertices of valence $1$. Every $G$-invariant subforest $S\subseteq T$ with no trivial components contains a unique (possibly empty) maximal $G$-invariant core subforest $\mathrm{core}(S)\subseteq S\subseteq T$, obtained by inductively removing $G$-orbits of edges whose terminal or initial vertex has valence 1. The process of removing $G$-orbits of edges terminates after finitely many steps by the cocompactness of $T$.

\begin{proof}
For $T\in\mathcal{PD}$ and $\varepsilon>0$, let $T^\varepsilon\subseteq T$ be the union of all subsets of the form $\bigcup_{k\in\mathbb{Z}}g^{k}[x,gx]$ with $g\in G$ hyperbolic and $x\in T$ such that $d(x,gx)\leq\varepsilon$. In particular, $T^\varepsilon$ contains the axes of all hyperbolic group elements $g\in G$ with $l_T(g)\leq\varepsilon$. Although $T^\varepsilon\subseteq T$ is generally not a simplicial subcomplex of $T$, we will still speak of $T^\varepsilon$ as a (nonsimplicial) subforest, as it becomes a subcomplex after subdividing the simplicial structure on $T$. In fact, $T^\varepsilon$ has no trivial components and its maximal $G$-invariant core subforest $\mathrm{core}(T^\varepsilon)\subseteq T^\varepsilon$ will be a genuine simplicial subforest of $T$. Since $G$ acts on $T$ by isometries, if $\bigcup_{k\in\mathbb{Z}}g^{k}[x,gx]$ is contained in $T^\varepsilon$ then for all $h\in G$ the translate $$h(\bigcup_{k\in\mathbb{Z}}g^{k}[x,gx])=\bigcup_{k\in\mathbb{Z}}(hgh^{-1})^{k}[hx,hgx]$$ is contained in $T^\varepsilon$ as well. Thus, $T^\varepsilon\subseteq T$ is $G$-invariant.

Since $\Out_\mathcal{D}(G)$ acts on $\mathcal{PD}$ with finitely many orbits of simplices, the complex $\mathcal{PD}$ must be finite-dimensional, say of dimension $d\in\mathbb{N}$, and the number of $G$-orbits of edges of any $T\in\mathcal{PD}$ is bounded above by $d+1$. Because the $G$-trees in $\mathcal{PD}$ have covolume 1, in any $G$-tree $T\in\mathcal{PD}$ there exists an orbit of edges with associated edge length $\geq\frac{1}{d+1}$. Therefore, for $\varepsilon<\frac{1}{d+1}$ the subforest $T^\varepsilon\subseteq T$ is a proper subforest. Given $G$-invariant simplicial subforests $S'\subseteq S$ of $T$ with no trivial components, the subforest $S'$ is a proper subforest of $S$ if and only if $G\backslash S-G\backslash S'$ consists of at least one edge. Hence, as the $G$-trees in $\mathcal{PD}$ have at most $d+1$ $G$-orbits of edges, the number $d+1$ is a uniform bound for the length of any chain of proper $G$-invariant simplicial subforests with no trivial components of any $G$-tree in $\mathcal{PD}$.

Let $D=\inf\widetilde{\Phi}$. Moreover, let $\varepsilon<\frac{1}{d+1}$ and $\Theta=\frac{\varepsilon}{e^{(D+1)(d+1)}}$. By Proposition~\ref{prop: parabolics leave thick part}, we can choose $k$ so large that $T_k\notin\mathcal{PD}(\Theta)$ and $d_{Lip}(T_k,T_k\Phi)<D+1$. For $i=0,\ldots,d+1$, define $\delta_i=\frac{\varepsilon}{e^{(D+1)i}}$ and consider the chain of $G$-invariant subforests $$T_k^\varepsilon=T_k^{\delta_0}\supseteq T_k^{\delta_1}\supseteq\ldots\supseteq T_k^{\delta_{d+1}}=T_k^\Theta$$ all of which are proper subforests of $T_k$. Note that $T_k^\Theta\neq\emptyset$, since $T_k\notin\mathcal{PD}(\Theta)$ and thus there exists a hyperbolic group element $g\in G$ with $l_{T_k}(g)<\Theta$ whose axis lies in $T_k^\Theta$. The associated chain of core subforests is a chain of $G$-invariant simplicial subforests of $T_k$, whose number of proper inclusions is bounded by $d$ by the arguments given above. Thus, there exists $i\in\left\{0,\ldots,d\right\}$ for which we have $\mathrm{core}(T_k^{\delta_{i+1}})=\mathrm{core}(T_k^{\delta_i})$. Since $d_{Lip}(T_k, T_k\Phi)<D+1$, the Lipschitz constant of the optimal map $f\colon T_k\to  T_k\Phi$ is smaller than $e^{D+1}$ and we have $$f(\mathrm{core}(T_k^{\delta_{i+1}}))\subseteq f(T_k^{\delta_{i+1}})\subseteq T_k^{\delta_{i}}.$$ The subforest $\mathrm{core}(T_k^{\delta_{i}})\subseteq T_k^{\delta_{i}}$ is a $G$-equivariant deformation retract of $T_k^{\delta_{i}}\subseteq T_k$ and the obvious deformation retraction extends to a $G$-equivariant self homotopy equivalence $h$ of $T_k$ (this is easily seen on the level of quotient graphs of groups). Now $f$ is $G$-equivariantly homotopic to the $G$-equivariant map $T_k\stackrel{f}{\to} T_k\Phi\stackrel{h}{\to} T_k\Phi$ that leaves the proper $G$-invariant simplicial subforest $\mathrm{core}(T_k^{\delta_{i+1}})=\mathrm{core}(T_k^{\delta_i})\subset T_k$ invariant. As remarked above, there exists a hyperbolic group element $g\in G$ whose axis lies in $T_k^\Theta$ and therefore also in $\mathrm{core}(T_k^{\delta_{i}})$, and we conclude that $\mathrm{core}(T_k^{\delta_{i}})$ is essential.
\end{proof}

\subsection{Train track representatives}
\label{sec: train track representatives}

Let $\mathcal{PD}$ be a projectivized deformation space of $G$-trees and $T\in\mathcal{PD}$. Moreover, let $\Phi\in \Out_\mathcal{D}(G)$.

\begin{de}
An optimal map $f\colon T\to T\Phi$ is a \emph{train track map} if it satisfies the following three conditions:
\begin{itemize}
\item[$(i)$] $\Delta(f)=T$;
\item[$(ii)$] $f$ maps edges to legal paths (see Definition~\ref{de: train tracks});
\item[$(iii)$] If $f$ maps a vertex $v\in V(T)$ to a vertex $f(v)\in V(T\Phi)$ then it maps legal turns at $v$ to legal turns at $f(v)$. (If $v$ has 2 gates then $f(v)$ could alternatively lie in the interior of an edge. Since $f$ is linear on edges, it then maps inequivalent directions at $v$ to inequivalent directions at $f(v)$.)
\end{itemize}
\end{de}

If $f$ is a train track map then for any legal line $L\subset T$ and every $k\in\mathbb{N}$ the image $f^k(L)\subset T\Phi^k$ is again a legal line. We say that an automorphism $\Phi\in \Out_\mathcal{D}(G)$ is \emph{represented by a train track map} if there exists a $G$-tree $T\in\mathcal{PD}$ and an optimal map $f\colon T\to T\Phi$ that is a train track map.

\begin{prop}
Let $\mathcal{PD}$ be a projectivized deformation space of $G$-trees. If an automorphism $\Phi\in \Out_\mathcal{D}(G)$ is represented by a train track map $f\colon T\to T\Phi$ then $d_{Lip}(T,T\Phi)=\inf\widetilde{\Phi}$ and, in particular, $\Phi$ is nonparabolic.
\end{prop}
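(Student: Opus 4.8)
The plan is to show that $d_{Lip}(T,T\Phi)\le d_{Lip}(S,S\Phi)$ for \emph{every} $S\in\mathcal{PD}$; together with the trivial inequality $d_{Lip}(T,T\Phi)\ge\inf\widetilde{\Phi}$ this gives $d_{Lip}(T,T\Phi)=\inf\widetilde{\Phi}$, realized at $T$, so that $\Phi$ is nonparabolic. Write $\lambda=\exp(d_{Lip}(T,T\Phi))=\sigma(f)$. For $k\ge1$ let $f_k\colon T\to T\Phi^k$ be the iterate $(f\Phi^{k-1})\circ\cdots\circ(f\Phi)\circ f$; each factor is $G$-equivariant for the twisted actions (since $G$-equivariance is preserved under twisting), so $f_k$ is $G$-equivariant, and its underlying self-map of the common metric tree is $f^k$. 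Since $f$ is a train track map, $\Delta(f)=T$, so $f$ has slope exactly $\lambda$ on every edge, and by conditions $(ii)$ and $(iii)$ in the definition $f$ carries legal edge-paths to legal edge-paths; inductively (the train track structures on the successive twisted trees $T\Phi^j$ defined by the maps $f\Phi^j$ being mutually compatible, as they all refer to the same underlying map $f$) the iterate $f_k$ also carries legal paths to legal paths and has slope $\lambda^k$ throughout.

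The heart of the argument is to produce a \emph{single} hyperbolic $\xi\in G$ with $l_{T\Phi^k}(\xi)=\lambda^k\,l_T(\xi)$ for all $k$. Since $f$ is an optimal map, the construction in the proof of Theorem~\ref{thm: inf = sup} (the part not using irreducibility: starting from a legal ray in $\Delta(f)$ and locating a translate of a vertex) provides a hyperbolic $\xi\in G$ whose axis $A_\xi\subseteq\Delta(f)=T$ is legal with respect to the train track structure defined by $f$. Then $f_k(A_\xi)$ is a legal bi-infinite path, hence reduced and therefore embedded in the tree, and since $f_k$ has nonzero slope on every edge of $A_\xi$ it restricts to a $\lambda^k$-homothetic immersion of $A_\xi$; by $G$-equivariance its image is a $\xi$-invariant line in $T\Phi^k$, which is forced to be the axis $A_{T\Phi^k}(\xi)$. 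Evaluating at $p\in A_\xi$,
\[
l_{T\Phi^k}(\xi)=d\big(\xi f_k(p),f_k(p)\big)=d\big(f_k(\xi p),f_k(p)\big)=\lambda^k\,d(\xi p,p)=\lambda^k\,l_T(\xi),
\]
so by Lemma~\ref{lem: pre-witness} we get $\sigma(T,T\Phi^k)\ge\lambda^k$, i.e.\ $d_{Lip}(T,T\Phi^k)\ge k\log\lambda$.

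Finally, fix $S\in\mathcal{PD}$. Using the triangle inequality (Proposition~\ref{prop: Lipschitz metric}$(3)$) and the isometric action of $\Out_\mathcal{D}(G)$ on $(\mathcal{PD},d_{Lip})$ (so $d_{Lip}(S\Phi^j,S\Phi^{j+1})=d_{Lip}(S,S\Phi)$ and $d_{Lip}(S\Phi^k,T\Phi^k)=d_{Lip}(S,T)$), one obtains
\[
k\log\lambda\le d_{Lip}(T,T\Phi^k)\le d_{Lip}(T,S)+d_{Lip}(S,S\Phi^k)+d_{Lip}(S,T)\le d_{Lip}(T,S)+d_{Lip}(S,T)+k\,d_{Lip}(S,S\Phi).
\]
Dividing by $k$ and letting $k\to\infty$ yields $\log\lambda\le d_{Lip}(S,S\Phi)$, which is the required inequality. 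I expect the main obstacle to be the middle paragraph: rigorously checking that the legal axis $A_\xi$ is transported by the iterates to the axis of $\xi$ in $T\Phi^k$, stretched by precisely $\lambda^k$. This amounts to (a) verifying that the train track structures defined by $f$, $f\Phi$, $f\Phi^2,\dots$ on the successive twisted trees are consistent, so that legality has an unambiguous meaning along the $\Phi$-orbit, and (b) observing that a legal (hence reduced) bi-infinite path in a tree is automatically embedded, forcing its $\xi$-invariant image to be an axis. The remaining steps are routine bookkeeping with the triangle inequality and the isometric $\Out_\mathcal{D}(G)$-action.
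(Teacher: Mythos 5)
Your proposal is correct and follows essentially the same route as the paper: produce a hyperbolic $\xi$ whose axis is legal and contained in $\Delta(f)=T$ (via the construction behind Theorem~\ref{thm: inf = sup} and Lemma~\ref{lem: characterization of witnesses}), observe that the train track property propagates this to $l_{T\Phi^k}(\xi)=\sigma(f)^k\,l_T(\xi)$ so that $d_{Lip}(T,T\Phi^k)\geq k\,d_{Lip}(T,T\Phi)$, and conclude with the same telescoping triangle inequality through an arbitrary $S\in\mathcal{PD}$. The extra care you take with the compatibility of the train track structures along the $\Phi$-orbit and with $f_k(A_\xi)$ being an embedded $\xi$-invariant line is exactly the content the paper compresses into one sentence.
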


\begin{proof}
Our argument is a generalization of \cite[Remark~8]{Be11}. Suppose that $f\colon T\to T\Phi$ is an optimal map that is a train track map. By Theorem~\ref{thm: inf = sup} and Lemma~\ref{lem: characterization of witnesses}, there exists a hyperbolic group element $\xi\in G$ whose axis $A_\xi\subset T$ lies in $\Delta(f)$ and is legal with respect to the train track structure defined by $f$ (once we know that there exists an optimal map $f\colon T\to T\Phi$, Theorem~\ref{thm: inf = sup} no longer requires $T$ to be irreducible). Since $f$ is a train track map, for all $k\in\mathbb{N}$ the image $f^k(A_\xi)\subset T\Phi^k$ is a $\xi$-invariant line -- and thus equals the hyperbolic axis of $\xi$ in $T\Phi^k$ -- that lies in $\Delta(f)$ and is legal. We therefore have
\begin{align*}
\sigma(T,T\Phi^k)=\sup_g\frac{l_{T\Phi^k}(g)}{l_T(g)}\geq \frac{l_{T\Phi^k}(\xi)}{l_T(\xi)} &= \frac{l_{T\Phi}(\xi)}{l_T(\xi)}\cdots\frac{l_{T\Phi^k}(\xi)}{l_{T\Phi^{k-1}}(\xi)}\\
&= \sigma(T,T\Phi)\cdots\sigma(T\Phi^{k-1},T\Phi^k)=\sigma(T,T\Phi)^k
\end{align*}
from which we conclude that $\sigma(T,T\Phi^k)=\sigma(T,T\Phi)^k$. In order to show that $d_{Lip}(T,T\Phi)=\inf\widetilde{\Phi}$, let $T'\in\mathcal{PD}$ be any other $G$-tree. We have
\begin{align*}
k\cdot d_{Lip}(T,T\Phi) &= d_{Lip}(T,T\Phi^k)\\
&\leq d_{Lip}(T,T')+d_{Lip}(T',T'\Phi^k)+d_{Lip}(T'\Phi^k,T\Phi^k)\\
&\leq d_{Lip}^{sym}(T,T')+k\cdot d_{Lip}(T',T'\Phi)
\end{align*}
and hence $d_{Lip}(T,T\Phi)\leq \frac{1}{k}\cdot d_{Lip}^{sym}(T,T')+d_{Lip}(T',T'\Phi)$. Letting $k$ go to infinity, we see that $d_{Lip}(T,T\Phi)\leq d_{Lip}(T',T'\Phi)$.
\end{proof}

As for existence of train track representatives, we have the following:

\begin{thm}[Existence of train track representatives]
\label{thm: train tracks}
Let $\mathcal{PD}$ be a projectivized deformation space of irreducible $G$-trees. If $\Out_\mathcal{D}(G)$ acts on $\mathcal{PD}$ with finitely many orbits of simplices then every irreducible automorphism (see Definition~\ref{de: irreducible automorphism}) $\Phi\in \Out_\mathcal{D}(G)$ is represented by a train track map.
\end{thm}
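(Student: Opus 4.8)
The plan is to adapt Bestvina's ``Bers-like'' argument \cite{Be11} to this setting, using the Lipschitz-geometric tools developed above. First I would locate a minimizer of the displacement function: since $\Phi$ is irreducible it is in particular not reducible, so by the contrapositive of Corollary~\ref{cor: parabolics reducible} the automorphism $\Phi$ is nonparabolic, i.e.\ $\inf\widetilde{\Phi}$ is realized at some $T\in\mathcal{PD}$. Fix an optimal map $f\colon T\to T\Phi$. By Proposition~\ref{prop: nonparabolic tension forest invariant} I may, after a small metric perturbation preserving $d_{Lip}(T,T\Phi)=\inf\widetilde{\Phi}$ and replacing $f$ by a homotopic optimal map, assume that the tension forest $\Delta=\Delta(f)$ satisfies $f(\Delta)\subseteq\Delta$; this endows $\Delta$ with the train track structure induced by $f$.

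The first structural step is to show $\Delta=T$, which is exactly condition $(i)$ in the definition of a train track map. Indeed, if $\Delta\subsetneq T$ then $\Delta$ is a proper $G$-invariant subforest with $f(\Delta)\subseteq\Delta$; moreover, by Theorem~\ref{thm: inf = sup} there is a witness $\xi\in G$ for the minimal stretching factor from $T$ to $T\Phi$, and by Lemma~\ref{lem: characterization of witnesses} its axis $A_\xi$ lies in $\Delta$, so $\Delta$ is essential. Thus $f\colon T\to T\Phi$ leaves an essential proper $G$-invariant subforest invariant, contradicting the irreducibility of $\Phi$ (Definition~\ref{de: irreducible automorphism}). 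Hence $\Delta(f)=T$ and $f$ defines a train track structure on all of $T$.

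It remains to arrange conditions $(ii)$ and $(iii)$, i.e.\ that $f$ maps edges to legal paths and legal turns (at vertices mapped to vertices) to legal turns --- equivalently, that $f$ creates no backtracking under iteration. If this fails, there is an illegal turn of $T$ that is \emph{crossed} by $f$ (either within the $f$-image of an edge, or as the $Df$-image of a legal turn at a vertex mapped to a vertex). The idea is then to perform a partial Skora-type fold \cite{Sk89} at such a turn, producing a $G$-tree $T'\in\mathcal{PD}$ together with an optimal map $f'\colon T'\to T'\Phi$; the fold identifies germs that are already identified by $f$, so the maximal stretching factor does not increase and $d_{Lip}(T',T'\Phi)\le\inf\widetilde{\Phi}$, whence $T'$ is again a minimizer to which Proposition~\ref{prop: nonparabolic tension forest invariant} and the previous step apply. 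Iterating this procedure, one tracks a combinatorial complexity --- such as the number of $G$-orbits of illegal turns that are crossed, controlled via the finitely many $\Out_\mathcal{D}(G)$-orbits of simplices in $\mathcal{PD}$ --- and argues that it cannot decrease indefinitely; the process therefore terminates, either at a genuine train track map or upon producing an essential proper invariant subforest, again contradicting irreducibility. I expect this last folding-and-termination step to be the main obstacle: as in \cite{Be11} and \cite[Section~5]{FM13}, the delicate points are to carry out the fold while staying in $\mathcal{PD}$ and at the minimum of $\widetilde{\Phi}$ (controlling the edges whose lengths must be readjusted to keep covolume $1$), to verify that the chosen complexity genuinely decreases (a fold may create new illegal turns, which must be accounted for), and to rule out the degenerate possibility that some edge length tends to $0$ under repeated folding.
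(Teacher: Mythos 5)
Your first half is exactly the paper's argument: irreducibility rules out parabolicity via Corollary~\ref{cor: parabolics reducible}, so a minimizer $T$ of $\widetilde{\Phi}$ exists, and Proposition~\ref{prop: nonparabolic tension forest invariant} together with the witness criterion (Theorem~\ref{thm: inf = sup} and Lemma~\ref{lem: characterization of witnesses}) forces $\Delta(f)=T$, since otherwise the tension forest would be an essential proper $G$-invariant subforest left invariant by $f$. The gap is in the second half, which you explicitly leave open: you propose a single iterative folding scheme for conditions $(ii)$ and $(iii)$ together, with termination controlled by ``the number of $G$-orbits of illegal turns that are crossed,'' and you concede that you cannot verify this quantity decreases (folds may create new illegal turns) nor rule out edge lengths degenerating. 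As it stands this is a plan with the hard step missing, not a proof.

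The paper resolves the two conditions by different mechanisms, neither of which is the open-ended iteration you describe. For $(ii)$ no iteration is needed: if some edge $e$ is mapped over an illegal turn, a single arbitrarily small $G$-equivariant fold of that turn, followed by rescaling to covolume $1$, strictly lowers the stretching factor on $e$, while each witness axis is legal, hence does not get folded, so its stretching factor does not increase and minimality of $d_{Lip}(T,T\Phi)$ is preserved. One thus obtains an optimal map at the minimum of $\widetilde{\Phi}$ whose tension forest is a \emph{proper} essential invariant subforest, and Proposition~\ref{prop: nonparabolic tension forest invariant} turns this into the same contradiction with irreducibility as in the $\Delta(f)=T$ step; so $(ii)$ holds automatically at a minimizer. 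For $(iii)$ an iteration is used, but the complexity that decreases is not a count of crossed illegal turns: it is the total gate excess $G(T)=\sum_w\max\left\{0,G(w)-2\right\}$, summed over the finitely many $G$-orbits of vertices, where $G(w)$ is the number of gates at $w$. Each small fold of an offending turn merges two gates at the relevant vertex (which retains at least two gates, by optimality), so $G(T)$ drops by one; the folds are arbitrarily small, so minimality, optimality, and conditions $(i)$--$(ii)$ are preserved throughout and no edge length degenerates. Without these two ingredients --- the irreducibility contradiction for $(ii)$ and the monotone gate count for $(iii)$ --- your argument does not close.
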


\begin{proof}
Since the automorphism $\Phi$ is irreducible, by Corollary~\ref{cor: parabolics reducible} it is nonparabolic, i.e., $\inf\widetilde{\Phi}$ is realized. Let $T\in\mathcal{PD}$ such that $d_{Lip}(T,T\Phi)=\inf\widetilde{\Phi}$ and let $f\colon T\to T\Phi$ be an optimal map, which exists by the irreducibility of $\mathcal{PD}$. We claim that $f$ already satisfies $(i)$ and $(ii)$ of Definition~\ref{de: train tracks}:

Assertion $(i)$ immediately follows from Proposition~\ref{prop: nonparabolic tension forest invariant}, as we could otherwise slightly perturb the metric on $T$ and find an optimal map $T\to T\Phi$ that leaves an essential proper $G$-invariant subforest of $T$ invariant (the tension forest of an optimal map is always essential by Theorem~\ref{thm: inf = sup} and Lemma~\ref{lem: characterization of witnesses}), contradicting the assumption that $\Phi$ is irreducible.

As for $(ii)$, suppose that an edge $e\in E(T)$ is mapped over an illegal turn. Slightly fold the illegal turn $G$-equivariantly and scale the metric on $T$ back to covolume 1. The optimal map $f\colon T\to T\Phi$ naturally induces a $G$-equivariant map that we make linear on edges relative to the vertices of $T$. The performed perturbation lowers the stretching factor of $f$ on the edge induced by $e$, which therefore drops out of the tension forest. Each witness $A_\xi\subset\Delta(f)=T$ is legal with respect to $f$ and does not get folded, whence the stretching factor of $f$ on $A_\xi$ does not increase\footnote{In fact, there also exists a witness $A_\xi\subset T$ whose $f$-image $A_{\Phi(\xi)}\subset T$ does not get folded either, as the induced map would otherwise have strictly smaller Lipschitz constant, contradicting the fact that $d_{Lip}(T,T\Phi)$ is minimal among all translation distances of $\Phi$.}. Hence, we preserve the condition that $d_{Lip}(T,T\Phi)=\inf\widetilde{\Phi}$ and the Lipschitz constant of $f$ remains minimal among all $G$-equivariant Lipschitz maps from $T$ to $T\Phi$.  After perturbing $f$ to an optimal map as in the proof of Proposition~\ref{prop: existence of optimal maps}, we obtain an optimal map $T\to T\Phi$ whose tension forest is a proper subforest of $T$. By Proposition~\ref{prop: nonparabolic tension forest invariant}, this again contradicts the assumption that $\Phi$ is irreducible.

Finally, we may perturb $T$ and $f$ by an arbitrarily small amount, preserving the condition that $d_{Lip}(T,T\Phi)=\inf\widetilde{\Phi}$ and that $f\colon T\to T\Phi$ is an optimal map -- and therefore also preserving conditions $(i)$ and $(ii)$ -- such that $(iii)$ of Definition~\ref{de: train tracks} is satisfied as well: If $f$ maps a legal turn at a vertex $v\in V(T)$ to an illegal turn, slightly fold the illegal turn $G$-equivariantly (see Figure~\ref{fig: folding illegal turns}).
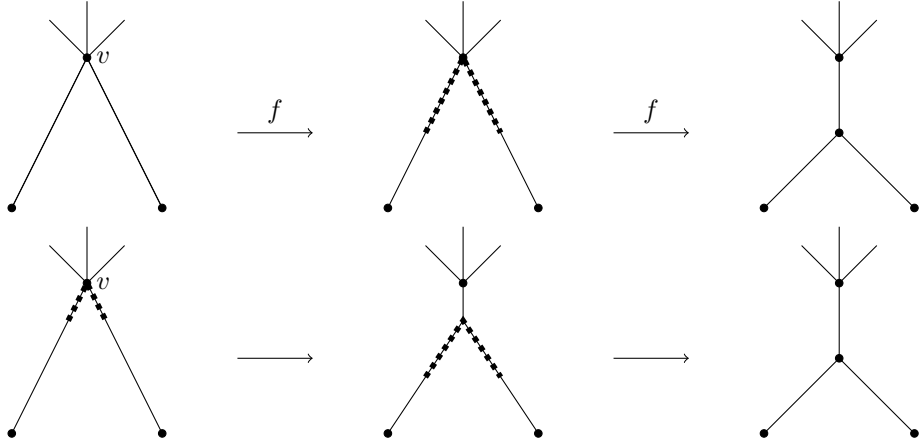
\begin{figure}[h!]
$$\begin{tikzpicture}
\draw [->] (3,-1) -- (4,-1);
\node [above] at (3.5,-1) {$f$};
\draw [->] (8,-1) -- (9,-1);
\node [above] at (8.5,-1) {$f$};

\draw [-] (1,0) -- (.5,.5);\draw [-] (1,0) -- (1,.75);\draw [-] (1,0) -- (1.5,.5);
\draw [-] (6,0) -- (5.5,.5);\draw [-] (6,0) -- (6,.75);\draw [-] (6,0) -- (6.5,.5);
\draw [-] (11,0) -- (10.5,.5);\draw [-] (11,0) -- (11,.75);\draw [-] (11,0) -- (11.5,.5);

\draw [-] (0,-2) -- (1,0) -- (2,-2);
\draw [-] (0,-2) -- (1,0) -- (2,-2);
\draw [-] [dotted, line width=0.075cm] (5.5,-1) -- (6,0) -- (6.5,-1);
\draw [-] (5,-2) -- (6,0) -- (7,-2);
\draw [-] (11,0) -- (11,-1);
\draw [-] (10,-2) -- (11,-1) -- (12,-2);
\draw [fill] (1,0) circle [radius=.05];
\draw [fill] (0,-2) circle [radius=.05];
\draw [fill] (2,-2) circle [radius=.05];
\draw [fill] (6,0) circle [radius=.05];
\draw [fill] (5,-2) circle [radius=.05];
\draw [fill] (7,-2) circle [radius=.05];
\draw [fill] (11,0) circle [radius=.05];
\draw [fill] (11,-1) circle [radius=.05];
\draw [fill] (10,-2) circle [radius=.05];
\draw [fill] (12,-2) circle [radius=.05];

\draw [-] (1,-3) -- (.5,-2.5);\draw [-] (1,-3) -- (1,-2.25);\draw [-] (1,-3) -- (1.5,-2.5);
\draw [-] (6,-3) -- (5.5,-2.5);\draw [-] (6,-3) -- (6,-2.25);\draw [-] (6,-3) -- (6.5,-2.5);
\draw [-] (11,-3) -- (10.5,-2.5);\draw [-] (11,-3) -- (11,-2.25);\draw [-] (11,-3) -- (11.5,-2.5);

\node [right] at (1,0) {$v$};
\node [right] at (1,-3) {$v$};

\draw [->] (3,-4) -- (4,-4);
\draw [->] (8,-4) -- (9,-4);
\draw [-] [dotted, line width=0.075cm] (.75,-3.5) -- (1,-3) -- (1.25,-3.5);
\draw [-] (0,-5) -- (1,-3) -- (2,-5);
\draw [-] [dotted, line width=0.075cm] (5.5,-4.25) -- (6,-3.5) -- (6.5,-4.25);
\draw [-] (6,-3) -- (6,-3.5);
\draw [-] (5,-5) -- (6,-3.5) -- (7,-5);
\draw [-] (11,-3) -- (11,-4);
\draw [-] (10,-5) -- (11,-4) -- (12,-5);
\draw [fill] (1,-3) circle [radius=.05];
\draw [fill] (0,-5) circle [radius=.05];
\draw [fill] (2,-5) circle [radius=.05];
\draw [fill] (6,-3) circle [radius=.05];
\draw [fill] (5,-5) circle [radius=.05];
\draw [fill] (7,-5) circle [radius=.05];
\draw [fill] (11,-3) circle [radius=.05];
\draw [fill] (11,-4) circle [radius=.05];
\draw [fill] (10,-5) circle [radius=.05];
\draw [fill] (12,-5) circle [radius=.05];
\end{tikzpicture}$$
\caption{Legal and illegal (dashed) turns. The upper row shows turns before folding, the bottom row after folding. The number $G(T)$ in the upper row is $3+2+(2+1)=8$, whereas in the bottom row it has decreased to $2+(2+0)+(2+1)=7$.}
\label{fig: folding illegal turns}
\end{figure}
Again, each witness $A_\xi\subset T$ is legal with respect to $f$ and does not get folded so that the stretching factor of $f$ on $A_\xi$ does not increase. Thus, we preserve the property that $d_{Lip}(T,T\Phi)=\inf\widetilde{\Phi}$ and that $f$ is a minimal stretch map. The perturbation makes the legal turn at $v$ illegal, but the induced map $f$ made linear on edges is still optimal and $v$ has still at least two gates, for $f$ would otherwise give rise to an optimal map whose tension forest is a proper subforest of $T$. The folding decreases the number $G(T)=\sum_w{\max}\left\{0,G(w)-2\right\}$, where $w$ ranges over the finitely many $G$-orbits of vertices of $T$ and $G(w)$ denotes the number of gates at $w$. After finitely many steps, we obtain an optimal map $f\colon T\to T\Phi$ that also satisfies condition $(iii)$.
\end{proof}

\begin{ex}
\label{ex: train tracks for virtually free}
Let $G$ be a finitely generated virtually nonabelian free group and $\mathcal{PD}$ the projectivized deformation space of minimal $G$-trees with finite vertex stabilizers (Example~\ref{ex: virtually free group}); it is irreducible and $\Out_{\mathcal{D}}(G)=\Out(G)$ acts on $\mathcal{PD}$ with finitely many orbits of simplices (Example~\ref{ex: virtually free}). We conclude that every irreducible automorphism of $G$ is represented by a train track map. This generalizes \cite[Theorem~1.7]{BH92} to virtually free groups.
\end{ex}

\begin{ex}
\label{ex: train tracks for GBS}
Let $G$ be a nonelementary GBS group that contains no solvable Baumslag-Solitar group $\BS(1,n)$ with $n\geq 2$. The projectivized deformation space $\mathcal{PD}$ of minimal $G$-trees with infinite cyclic vertex and edge stabilizers is irreducible (Example~\ref{ex: nonelementary GBS groups}) and $\Out_\mathcal{D}(G)=\Out(G)$ acts on $\mathcal{PD}$ with finitely many orbits of simplices (Example~\ref{ex: GBS orbits}). Hence, every irreducible automorphism of $G$ is represented by a train track map.
\end{ex}

\end{document}